\documentclass[a4paper,10pt]{amsart}

\usepackage{textcomp}
\usepackage{lmodern}
\usepackage{amsmath,amsthm,amssymb}
\usepackage[T1]{fontenc}
\usepackage[english]{babel}
\usepackage{color} 
\usepackage[utf8]{inputenc}
\usepackage[hidelinks,pdfusetitle]{hyperref}
\usepackage[all]{xy}

\theoremstyle{definition}
\newtheorem{defn}{Definition}[section]
\newtheorem{remark}[defn]{Remark}

\newtheorem*{defn*}{Definition}
\theoremstyle{plain}
\newtheorem{thm}[defn]{Theorem}
\newtheorem{prop}[defn]{Proposition}
\newtheorem{lemma}[defn]{Lemma}
\newtheorem{corollary}[defn]{Corollary}


\newcommand{\numberset}{\mathbb}
\newcommand{\N}{\numberset{N}}
\newcommand{\Z}{\numberset{Z}}

\newcommand{\R}{\numberset{R}}
\newcommand{\C}{\numberset{C}}

\newcommand{\T}{\mathcal{T}}

\newcommand{\calD}{\mathcal{D}}
\newcommand{\F}{\mathcal{F}}
\newcommand{\calP}{\mathcal{P}}
\newcommand{\calQ}{\mathcal{Q}}
\newcommand{\D}{\mathcal{D}}

\newcommand{\calL}{\mathcal{L}}

\DeclareMathOperator{\End}{End}
\DeclareMathOperator{\Id}{Id}

\DeclareMathOperator{\cl}{cl}

\DeclareMathOperator{\ad}{ad}

\DeclareMathOperator{\Der}{Der}

\renewcommand{\epsilon}{\varepsilon}
\renewcommand{\theta}{\vartheta}
\renewcommand{\phi}{\varphi}

\title{Formal connections for families of star products}

\author{J\o rgen Ellegaard Andersen}
\address{Centre for Quantum Geometry of Moduli Spaces, Aarhus University, Ny Munkegade 118, bldg. 1530, DK-8000 Aarhus C Denmark}
\email{andersen@qgm.au.dk}

\author{Paolo Masulli}
\address{Centre for Quantum Geometry of Moduli Spaces, Aarhus University, Ny Munkegade 118, bldg. 1530, DK-8000 Aarhus C Denmark}
\email{paolo.masulli@gmail.com}

\author{Florian Sch\"atz}
\address{Centre for Quantum Geometry of Moduli Spaces, Aarhus University, Ny Munkegade 118, bldg. 1530, DK-8000 Aarhus C Denmark}
\email{florian.schaetz@gmail.com}

\thanks{
Partially supported by the center of excellence grant ’Centre for Quantum Geometry of Moduli Spaces’ from the Danish National Research Foundation (DNRF95).}

\begin{document}

\begin{abstract}
We define the notion of a formal connection for a smooth family of star products with fixed underlying symplectic structure.
Such a formal connection allows one to relate star products at different points in the family.
This generalizes the formal Hitchin connection defined in \cite{andersen2012hitchin}.
 We establish a necessary and sufficient condition that guarantees the existence of a formal connection, and we describe the space of formal connections for a family as an affine space modelled on the formal symplectic vector fields. Moreover
 we show that if the parameter space has trivial first cohomology group, any two flat formal connections are related by an automorphism
 of the family of star products.
\end{abstract}

\maketitle

\section{Introduction}\label{section: intro}

\subsection{Quantization and the Hitchin connection}

In his seminal paper \cite{witten1989quantum} Witten investigated quantum Chern-Simons theory,
a $3$-dimensional topological quantum field theory (TQFT). As the $2$-dimensional part 
of this theory he proposed
the geometric quantization
of the moduli space of flat connections on a Riemann surface $\Sigma$.
 This moduli space has a natural symplectic structure $\omega$ and admits a prequantum line bundle, i.e. a Hermitian line bundle $\calL$ with a compatible connection, whose curvature is given by the symplectic form.

The Teichm\"{u}ller space $\T$ of the surface $\Sigma$ parametrizes complex structures on the
moduli space, so for each point $\sigma \in \T$ and each natural number $k$, called the level
of quantization, we have the quantum state space of geometric quantization, which is the space 
\[
Q_k(\sigma) = H^0(M_\sigma; \calL^{k} )
\] 
of holomorphic sections of the $k$-th tensor power of the prequantum line bundle. These form the fibres of a vector bundle $Q$ over $\T$, called the Verlinde bundle, and it was shown independently by Hitchin \cite{hitchin1990flat} and Axelrod, Della Pietra and Witten \cite{axelrod1991} that this bundle admits a natural projectively flat connection, which we shall call the \emph{Hitchin connection} (see also \cite{andersen2012hitchin} for a purely finite dimensional differential geometric approach to this connection). Consequently, the quantum spaces associated with different complex structures are identified, as projective spaces, through the parallel transport of this connection. By the work of Lazslo combined with the work of the first author and Ueno \cite{L}, \cite{AU1,AU2,AU3,AU4}, this provides a geometric construction of the vector spaces the Witten-Reshetikhin-Turaev TQFT associates to a closed oriented surface \cite{RT1,RT2,T}.

\subsection{Formal connections}

On a Poisson manifold $M$, a \emph{deformation quantization}, or \emph{star product}, is a $\C[[h]]$-linear product on the space $C^\infty(M)[[h]]$ that is associative, reduces to the pointwise product modulo $h$, and such that the component of degree $1$ in $h$ of its commutator is the Poisson bracket.

In the symplectic case, the existence of star products was originally established by De Wilde and Lecomte in \cite{de1983existence} via a cohomological approach. Fedosov provided a more direct and geometrical construction of star products in \cite{fedosov1994simple}. In the general Poisson case, the question of existence and classification was settled by Kontsevich, as a consequence of his Formality theorem \cite{kontsevich2003deformation}.

The first named author of this paper has studied the asymptotic relationship between Toeplitz operators and the Hitchin connection in \cite{andersen2006asymptotic}, and extended this asymptotic analysis to higher orders \cite{andersen2012hitchin}, which led him to define the following notion.

\begin{defn}
Let $M$ be a symplectic manifold equipped with a smooth family of star products $\{\star_\sigma\}_{\sigma \in \T}$ parametrized by a manifold $\T$. A \emph{formal connection} for
$\{\star_\sigma\}_{\sigma\in \T}$ is a connection on the bundle $\T \times C^\infty(M)[[h]] \to \T$ of the form:
\begin{equation}\label{eq:introduction_formal_connection}
D_V f = V[f] + A(V)(f),
\end{equation}
where $A$ is a smooth 1-form on $\T$ with values in differential operators on $M$ such that $A = 0 \, \mathrm{mod}\, h$, $f$ is a smooth section of the bundle, $V$ is any smooth vector field on $\T$, and $V[f]$ denotes the derivative of $f$ along $V$. 

The formal connection is called \emph{compatible} with the family of star products $\{\star_\sigma\}_{\sigma\in\T}$, if it is a derivation of the products for any vector field $V$ on $\T$, i.e.
\begin{equation}\label{eq:introduction_formal_connection_is_derivation}
D_V(f \star_\sigma g) = D_V(f) \star_\sigma g + f \star_\sigma D_V( g),
\end{equation}
for all $\sigma \in \T$ and all smooth sections $f$ and $g$ of $\T \times C^\infty(M)[[h]] \to \T$.
\end{defn}

Recall that on a K\"ahler manifold $M$ one can consider the Berezin-Toeplitz star product $\star^{BT}$, which can be constructed via the theory of Toeplitz operators, see \cite{schlichenmaier1999deformation,schlichenmaier2011deformation}.\footnote{
One of the earliest studies of star products on K\"ahler manifolds was Berezin's work \cite{berezin1974quantization}. Berezin's approach was extended to arbitrary K\"ahler manifolds by Reshetikhin and Takhtajan in \cite{reshetikhin2000deformation}.}
This leads us to consider the following class of examples of families of star products:

\begin{defn}
Let $M$ be a symplectic manifold with a family of compatible almost complex structures parametrized by a complex manifold $\T$, so that for any $\sigma \in \T$, the manifold $M_\sigma$ is a K\"ahler manifold, and let $\{\star_\sigma^{BT}\}_{\sigma\in \T}$ be the associated family of Berezin-Toeplitz star products. A \emph{formal Hitchin connection} on $M$ is a formal connection that is compatible with this family of star products and that is flat.
\end{defn}

In \cite{andersen2012hitchin} a particular formal Hitchin connection that is associated to the Hitchin connection from geometric quantization was studied, and it was shown that the projective flatness of the Hitchin connection implies the flatness of this formal Hitchin connection. An explicit expression for the $1$-form $\tilde{A}(V)$
for the formal Hitchin connection associated to the Hitchin connection was given in \cite{andersen2012hitchin,andersen2011hitchin}. The formula reads
\begin{equation}\label{eq:explicit_formula_FHC_introduction}
\tilde{A}(V)(f) = -V[F]f + V[F] \star^{BT} f + h(E(V)(f) - H(V)\star^{BT} f).
\end{equation}
where $E$ is a $1$-form on $\T$ with values in differential operators on $M$, $H$ is the $1$-form with values in $C^\infty(M)$ given by $H(V) = E(V)(1)$, and $F$ is the Ricci potential of the family. The construction will be recalled in more detail in Section \ref{section: Hitchin connection}.
 
It was furthermore noticed in \cite{andersen2012hitchin}, that -- provided certain cohomology groups of the mapping class group vanish -- any formal Hitchin connection could be used to obtain a mapping class group equivariant deformation quantization on the moduli space. The first author has further applied these constructions to the WRT-TQFT using the geometric description of the TQFT vector spaces described above in \cite{A7, A8}. See also \cite{A5} and \cite{andersenblaavand2011}, where the first author gave an explicit expression for the parallel transport of formal Hitchin connection in the abelian case.

With this motivation in mind, we return to the general case.

\subsection{Existence and classification of formal connections}

Let $(M,\omega)$ be a symplectic manifold with a family of natural star products $\{\star_\sigma\}_{\sigma\in\T}$ that is parametrized by $\T$.

We first establish the following necessary and sufficient condition for the existence of a compatible formal connection:

\begin{thm}\label{thm:introduction_existence_formal_connection}
 There exists a formal connection compatible with $\{\star_\sigma\}_{\sigma\in \T}$ if and only if the characteristic class 
 $[\cl(\star_\sigma)]$
 of the star products is locally constant on $\T$.
\end{thm}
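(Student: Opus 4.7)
The plan is to recast the compatibility condition as a Hochschild-cohomological obstruction problem, argue pointwise existence via the characteristic class, and then globalise by a partition of unity on $\T$.

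First I would reformulate \eqref{eq:introduction_formal_connection_is_derivation}. Writing out the derivation property and using the Leibniz rule for $V[\blank]$, the compatibility of $D_V = V + A(V)$ with $\star_\sigma$ at the point $\sigma$ is equivalent to
\[
V[\star_\sigma](f,g) \;=\; -\delta_{\star_\sigma} A(V) \,(f,g),
\]
where $V[\star_\sigma]$ is the bidifferential operator obtained by differentiating the family of products along $V$, and $\delta_{\star_\sigma}$ is the Hochschild differential of $\star_\sigma$ on differential cochains. Differentiating the associativity of $\star_\sigma$ shows that $V[\star_\sigma]$ is always a Hochschild $2$-cocycle, so the question is whether it is a $2$-coboundary \emph{with a smooth dependence on $\sigma$ and a linear dependence on $V$}.

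For the easy direction, assume a compatible $D$ exists. Since $A(V)=0 \bmod h$, the parallel transport $P_\gamma$ of $D$ along any smooth path $\gamma\colon[0,1]\to\T$ is the identity modulo $h$, and the derivation property integrates to $P_\gamma(f\star_{\gamma(0)}g) = P_\gamma(f)\star_{\gamma(1)}P_\gamma(g)$. Hence $P_\gamma$ is an equivalence of star products, and by Deligne/Gutt--Rawnsley the characteristic class is preserved: $[\cl(\star_{\gamma(0)})] = [\cl(\star_{\gamma(1)})]$. So $[\cl(\star_\sigma)]$ is constant on each path component of $\T$, in particular locally constant.

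For the converse, the key input is the standard identification, valid for natural (differential) star products on a symplectic manifold,
\[
HH^2_{\mathrm{diff}}\bigl(C^\infty(M)[[h]],\star_\sigma\bigr)\;\cong\;H^2_{\mathrm{dR}}(M)[[h]],
\]
under which the Hochschild class of $V[\star_\sigma]$ corresponds to the derivative $V\bigl[\cl(\star_\sigma)\bigr]$ of the characteristic class along $V$. Assuming $[\cl(\star_\sigma)]$ is locally constant, this class vanishes, so locally $V[\star_\sigma]$ is a Hochschild coboundary. To upgrade this to a smooth, $V$-linear local solution, I would, on a sufficiently small open $U\subset\T$, pick a smooth family of equivalences $T_\sigma\colon (C^\infty(M)[[h]],\star_{\sigma_0})\to(C^\infty(M)[[h]],\star_\sigma)$ with $T_{\sigma_0}=\Id$ (available because the class is constant on $U$; one may produce it from a smooth family of Fedosov data, adjusted so their Fedosov classes are constant along $U$), and set $A_U(V) := (V[T_\sigma])\circ T_\sigma^{-1}$. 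A direct computation shows that $A_U$ is a smooth $1$-form on $U$ satisfying $V[\star_\sigma] = -\delta_{\star_\sigma}A_U(V)$.

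Finally, I would globalise using an open cover $\{U_i\}$ with local $1$-forms $A_i$ as above and a subordinate partition of unity $\{\rho_i\}$ on $\T$. Because the equation $\delta_{\star_\sigma}A(V) = -V[\star_\sigma]$ is affine-linear in $A$ and affine-linear in $V$, the convex combination
\[
A \;:=\; \sum_i \rho_i\, A_i
\]
is a globally defined smooth $1$-form on $\T$ with values in differential operators, vanishing modulo $h$, and satisfying the compatibility equation at every $\sigma\in\T$. This produces the desired formal connection. The main obstacle I foresee is the identification $[V[\star_\sigma]]\leftrightarrow V\bigl[\cl(\star_\sigma)\bigr]$ together with the construction of \emph{smooth} local trivialisations $T_\sigma$; the partition-of-unity step is then essentially formal thanks to the linearity of the defining equation.
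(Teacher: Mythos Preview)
Your proposal is correct and takes a genuinely different route from the paper in the hard direction. Both you and the paper begin by recasting compatibility as the Hochschild equation $d_H A(V)=V[\star_\sigma]$ (this is the paper's Proposition~\ref{prop:existence_connection_equivalent_to_cochain_exact}), and both obtain the easy implication via parallel transport and invariance of the characteristic class under equivalence. For the converse, however, the paper does \emph{not} pass through local trivialisations and a partition of unity on $\T$: it reduces to Fedosov star products via \cite{gutt2003natural}, then works directly inside Fedosov's framework to produce a global $A$. Concretely, it builds a $1$-form $s$ on $\T$ with values in sections of the Weyl bundle such that $\hat D_s:=d_\T+\tfrac{i}{h}\ad(s)$ graded-commutes with the family of Fedosov connections $D_r$ (Proposition~\ref{prop: s}); the formal connection is then $A(V)=\tfrac{i}{h}\,p\bigl([i_Vs,\tau_r(\cdot)]\bigr)$ (Proposition~\ref{prop: s2}). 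The input $\beta\in\Omega^1(\T,\Omega^1(M)[[h]])$ with $d_M i_V\beta=V[\cl(\star_\sigma)]$ plays exactly the role of your ``local constancy'' hypothesis.

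Your approach buys softness: you never need to unpack Fedosov's recursion, and the partition-of-unity step is clean because the equation $d_H A(V)=V[\star_\sigma]$ is $C^\infty(\T)$-affine in $A$. The paper's approach buys explicitness: it yields a \emph{preferred} $A$ attached to each choice of $\beta$ (the map $C:\mathcal P\to\mathcal C$ of Theorem~\ref{thm:trivializations->trivializations}), an explicit leading term $A(V)(f)=-h\,i_V i_{X_f}\beta_1\pmod{h^2}$, and an explicit curvature formula. Note also that the step you flag as the main obstacle---producing a \emph{smooth} family of equivalences $T_\sigma$ on a small $U$---is precisely where the Fedosov machinery re-enters through the back door (you need a parametrised version of Fedosov's isomorphism theorem), so the technical cores of the two arguments are closer than they first appear.
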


In fact, we show that even more is true: we provide a natural way to associate a formal connection to every trivialization for the family of characteristic $2$-forms
associated to the family of star products and establish a formula for its curvature.
The proof relies on Fedosov's geometrical construction of star products \cite{fedosov1994simple}.

This result specializes to the case of a symplectic manifold equipped with a smooth family of compatible K\"ahler structures, where we take the family of Berezin-Toeplitz star products associated with them. In this situation 
Theorem \ref{thm:introduction_existence_formal_connection} yields the existence
of a compatible formal connection.

\begin{thm}
Let $(M,\omega)$ be a compact, symplectic manifold, and let $\T$ be a complex manifold parametrizing a family of compatible K\"ahler structures $I_\sigma$ on $M$, with $\sigma \in \T$. The family of Berezin-Toeplitz star products associated with the family has constant characteristic class, and therefore admits a compatible formal connection.
\end{thm}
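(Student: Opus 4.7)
By Theorem~\ref{thm:introduction_existence_formal_connection}, it is enough to show that the characteristic class $[\cl(\star^{BT}_\sigma)]$ is locally constant as $\sigma$ varies over $\T$. My strategy is to invoke the explicit computation of this class carried out by Karabegov and Schlichenmaier (see \cite{schlichenmaier1999deformation,schlichenmaier2011deformation}) and then observe that the resulting expression depends only on topological data that is manifestly independent of $\sigma$.

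In more detail, the Karabegov--Schlichenmaier formula expresses the characteristic class of $\star^{BT}_\sigma$ in terms of $\tfrac{1}{h}[\omega]$ and the first Chern class $c_1(TM, I_\sigma)$, with universal coefficients. The first contribution is independent of $\sigma$, since $\omega$ is a fixed symplectic form on $M$. For the second, recall that the space of almost complex structures on $(M,\omega)$ compatible with $\omega$ is path connected; indeed, it is the space of sections of a fibre bundle over $M$ whose fibres are contractible (each fibre is modelled on the Siegel upper half space). Since $c_1$ of the tangent bundle is a homotopy invariant of the almost complex structure, $c_1(TM, I_\sigma)$ represents the same cohomology class for every $\sigma \in \T$. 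Consequently $[\cl(\star^{BT}_\sigma)]$ is actually constant on $\T$, hence certainly locally constant, and Theorem~\ref{thm:introduction_existence_formal_connection} supplies the desired compatible formal connection.

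The main point requiring care is to reconcile the normalization of the characteristic class used in Theorem~\ref{thm:introduction_existence_formal_connection} (which arises from Fedosov's construction) with the Deligne--Fedosov class of $\star^{BT}$ as computed by Karabegov and Schlichenmaier; this is a matter of tracking conventions rather than any substantive argument. One should also verify that the Berezin--Toeplitz star product is natural (i.e., given by bidifferential operators) in the sense required by Theorem~\ref{thm:introduction_existence_formal_connection}, but this is well known. I do not foresee any other obstacle.
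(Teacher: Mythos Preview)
Your proposal is correct and follows essentially the same route as the paper: show that the characteristic class of $\star^{BT}_\sigma$ is determined by $[\omega]$ and $c_1(TM)$, observe that both are independent of the compatible complex structure $\sigma$, and then invoke Theorem~\ref{thm:introduction_existence_formal_connection}. The only cosmetic difference is that the paper attributes the computation of the characteristic class to Hawkins rather than to Karabegov--Schlichenmaier, and states the independence of $c_1$ from $\sigma$ without the explicit contractibility argument you supply.
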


Let us assume in the following that a family of star products $\{\star_\sigma\}_{\sigma \in \T}$ admits
a compatible formal connection. We want to understand the space of all such formal connections.
Relying on a result of Gutt and Rawnsley, \cite{gutt1999equivalence}, we obtain:

\begin{thm}\label{thm:introduction_affine_space_formal_connections}
Let $M$ be a symplectic manifold equipped with a smooth family of star products $\{\star_\sigma\}_{\sigma\in\T}$ parametrized by $\T$. The space $\F(M,\star_\sigma)$ of formal connections on $M$ that are compatible with the family of star products is an affine space over the space of $1$-forms on $\T$ with values in formal symplectic vector fields on $M$, and it can then be written as:
\[
\F(M,\star_\sigma) = D_0 + \Omega^1(\T,h\Gamma_\mathrm{sym}(M)[[h]])
\]
for a fixed formal connection $D_0$, where $\Gamma_\mathrm{sym}(M)$ denotes the space of symplectic vector fields on $M$.
\end{thm}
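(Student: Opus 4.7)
My plan is to reduce the affine space statement to the classification of derivations of a symplectic star product due to Gutt and Rawnsley \cite{gutt1999equivalence}, applied fiberwise over $\T$. There are two directions to establish: that any two elements of $\F(M,\star_\sigma)$ differ by a 1-form on $\T$ with values in $h\Gamma_{\mathrm{sym}}(M)[[h]]$, and conversely that any such 1-form added to a given element of $\F(M,\star_\sigma)$ produces another compatible formal connection.

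For the first direction, I would take $D_0, D_1 \in \F(M,\star_\sigma)$ of the form $D_i f = V[f] + A_i(V)(f)$ and observe that the derivative $V[f]$ cancels in the difference, so $D_1 - D_0 = (A_1 - A_0)(V)$ is $C^\infty(\T)$-linear and defines a 1-form on $\T$ with values in differential operators on $M$. Since both $A_0(V)$ and $A_1(V)$ vanish modulo $h$, so does their difference. Subtracting the compatibility condition \eqref{eq:introduction_formal_connection_is_derivation} applied to $D_0$ and $D_1$ then shows that, for every vector field $V$ on $\T$ and every $\sigma \in \T$, the operator $(A_1 - A_0)(V)_\sigma$ is a $\C[[h]]$-linear derivation of $\star_\sigma$ vanishing modulo $h$.

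Next I would invoke the Gutt-Rawnsley classification, which identifies the space of $\C[[h]]$-linear derivations of $\star_\sigma$ that vanish modulo $h$ with $h\Gamma_{\mathrm{sym}}(M)[[h]]$. Applying this fiberwise over $\T$ produces the desired element of $\Omega^1(\T, h\Gamma_{\mathrm{sym}}(M)[[h]])$. For the converse, an element $B$ of the latter space will be re-interpreted as a 1-form on $\T$ whose value at each $\sigma$ is a derivation of $\star_\sigma$; adding $B$ to $D_0$ preserves the form \eqref{eq:introduction_formal_connection} with $A = A_0 + B$ still vanishing modulo $h$, and the compatibility condition survives because the sum of two derivations is again a derivation.

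The main obstacle will be the family version of the Gutt-Rawnsley identification. Their original result concerns a single star product, whereas here the correspondence between derivations and formal symplectic vector fields must depend smoothly on $\sigma \in \T$ in order to produce a genuinely $\Omega^1$-valued identification. I expect this to follow by carrying out the Gutt-Rawnsley construction within a Fedosov-type framework that itself varies smoothly with $\sigma$, parallel to the setup used in the proof of Theorem \ref{thm:introduction_existence_formal_connection}.
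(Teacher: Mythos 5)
Your proposal follows essentially the same route as the paper: the difference of two compatible formal connections is a $1$-form on $\T$ with values in derivations of $\star_\sigma$ vanishing modulo $h$, which are identified with $h\Gamma_{\mathrm{sym}}(M)[[h]]$ via the Gutt--Rawnsley classification, and conversely adding such a $1$-form preserves compatibility. The smooth dependence on $\sigma$ that you flag as the main obstacle is in fact treated only implicitly in the paper, so your argument matches (and if anything is slightly more careful than) the published one.
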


Furthermore, if we assume that $H^1(M;\R)$ vanishes, all symplectic vector fields are Hamiltonian, and therefore all derivations of a star product on $M$ are essentially inner, therefore they are parametrized by elements of $\tilde{C}_h^\infty(M)$, the space of formal functions on $M$ modulo constants.

\subsection{Gauge transformations of formal connections}

We study the action of gauge transformations on the space of formal connections $\F(M,\star_\sigma)$. The transformations which we consider are differential self-equivalences of the family of star products, since the gauge transformations
have to preserve the compatibility with the family of star products.
If we assume that the parameter space $\T$ of the family of star products has trivial first cohomology group, i.e. $H^1(\T,\mathbb{R})=0$, we obtain the following result.

\begin{thm}\label{thm:gauge_transformation_of_flat_formal_connections_introduction}
Let $M$ be a symplectic manifold with a family of star products $\{\star_\sigma\}_{\sigma\in\T}$ parametrized by a smooth manifold $\T$ with trivial first cohomology group. Let $D,D' \in \F(M,\star_\sigma)$ be formal connections for the family and let us assume that they are flat. Then they are gauge equivalent via a self-equivalence of the family of star products $P \in C^\infty(\T,\D_h(M))$, meaning that 
\begin{equation}\label{eq:gauge_equivalence_formal_connections_introduction}
D_V' = P^{-1}D_VP,
\end{equation}
for any vector field $V$ on $\T$.
\end{thm}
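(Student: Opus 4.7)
The plan is to use the affine structure of $\F(M,\star_\sigma)$ from Theorem \ref{thm:introduction_affine_space_formal_connections} to write $D' = D + B$, to recast the relation $D'_V = P^{-1} D_V P$ as a linear PDE for $P$, and to solve it order by order in $h$, using the flatness of $D$ and $D'$ to ensure that the successive obstructions are closed $1$-forms on $\T$.

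Writing $D_V = V + A(V)$ and $D'_V = V + A'(V)$ with $B := A' - A \in \Omega^1(\T, h\Gamma_{\mathrm{sym}}(M)[[h]])$, identified with a 1-form with values in derivations of $\{\star_\sigma\}$, the identity $D_V \circ P = P \circ D'_V$ unfolds to
\[
V[P] + [A(V), P] = P\, B(V).
\]
This says precisely that $P$ is a parallel section, subject to the normalization $P|_{\sigma_0} = \mathrm{Id}$ at a chosen basepoint $\sigma_0 \in \T$, of the connection
\[
\nabla_V Q := V[Q] + [A(V), Q] - Q\, B(V)
\]
on the bundle $\T \times \D_h(M) \to \T$. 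Subtracting the flatness conditions for $D'$ and $D$ yields a Maurer-Cartan equation of the form $d_A B + \tfrac12 [B,B] = 0$, where $d_A$ is the covariant exterior derivative along $A$; a direct calculation identifies this with the vanishing of the curvature of $\nabla$.

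Flatness of $\nabla$ alone only yields local parallel transport, so to construct $P$ globally we iterate in $h$. Expanding $P = \mathrm{Id} + h P_1 + h^2 P_2 + \cdots$, together with $A = hA_1 + \cdots$ and $B = hB_1 + \cdots$, the equation $\nabla_V P = 0$ at order $h^n$ takes the form $V[P_n] = \Phi_n(V)$, for a $1$-form $\Phi_n$ depending only on $A_1,\ldots,A_{n-1}$, $B_1,\ldots,B_n$ and the already-constructed $P_1,\ldots,P_{n-1}$; in particular $\Phi_1 = B_1$. An induction on $n$, using the Maurer-Cartan equation expanded at order $h^n$ together with the previously established relations $dP_k = \Phi_k$ for $k < n$, shows that $\Phi_n$ is closed. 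Since $H^1(\T,\R) = 0$ implies that $H^1$ of $\T$ vanishes with coefficients in any real vector space, one finds $P_n \in C^\infty(\T, \D(M))$ satisfying $dP_n = \Phi_n$ and normalized by $P_n(\sigma_0) = 0$.

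The last step is to verify that the assembled $P$ lies in $C^\infty(\T, \D_h(M))$, i.e.\ that $P_\sigma$ is a self-equivalence of $\star_\sigma$ at every $\sigma$. Because $B$ takes values in derivations of the star products and the initial value $P|_{\sigma_0} = \mathrm{Id}$ is a self-equivalence, one can check inductively that each $P_n$ may be chosen so that the self-equivalence property is preserved at every order. The main obstacle in the proof is this inductive bookkeeping: simultaneously verifying that the Maurer-Cartan equation at order $h^n$, combined with the previously constructed $P_k$, implies $d\Phi_n = 0$, and that the corresponding $P_n$ can be picked compatibly with the self-equivalence condition. Once these are in place, the hypothesis $H^1(\T,\R) = 0$ does the rest.
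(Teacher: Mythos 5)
Your proposal follows essentially the same route as the paper's proof: rewrite $D_V' = P^{-1}D_VP$ as the equation $V[P] = PA'(V) - A(V)P$ (your $V[P] + [A(V),P] = PB(V)$ is the same identity), solve it order by order in $h$, use the flatness of both $D$ and $D'$ to show that the obstruction $1$-form on $\T$ at each order is closed, and integrate it using $H^1(\T,\R)=0$. The only difference is cosmetic packaging (your Maurer--Cartan/flat-connection formulation versus the paper's explicit curvature computation), plus your additional basepoint normalization and check that $P_\sigma$ is genuinely a self-equivalence of $\star_\sigma$ --- a point the paper's proof does not verify explicitly, and which is indeed best handled as you suggest, by propagating the multiplicativity from the basepoint.
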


This implies the following corollary:

\begin{corollary}
Let $\T$ be a smooth manifold with trivial first cohomology group, i.e. $H^1(\T,\mathbb{R})=0$. If there exists a formal Hitchin connection $D$ in the bundle $\T \times C^\infty(M)[[h]]$ over $\T$, then it is unique up to gauge-equivalence.
\end{corollary}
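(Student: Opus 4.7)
The plan is to derive this corollary as a direct application of Theorem \ref{thm:gauge_transformation_of_flat_formal_connections_introduction}. First I would unpack the definition of a formal Hitchin connection: by definition, such a connection $D$ on $\T \times C^\infty(M)[[h]] \to \T$ is (i) a formal connection for the family of Berezin--Toeplitz star products $\{\star_\sigma^{BT}\}_{\sigma \in \T}$, (ii) compatible with this family in the sense of \eqref{eq:introduction_formal_connection_is_derivation}, and (iii) flat. Thus any formal Hitchin connection is in particular an element of the affine space $\F(M,\star_\sigma^{BT})$ from Theorem \ref{thm:introduction_affine_space_formal_connections}, and additionally satisfies the flatness hypothesis of Theorem \ref{thm:gauge_transformation_of_flat_formal_connections_introduction}.

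Next, given two formal Hitchin connections $D$ and $D'$ on the same family, both are flat formal connections in $\F(M,\star_\sigma^{BT})$. The assumption $H^1(\T,\mathbb{R}) = 0$ is precisely the parameter-space hypothesis of Theorem \ref{thm:gauge_transformation_of_flat_formal_connections_introduction}, so that theorem applies verbatim to the pair $(D,D')$. It yields a differential self-equivalence $P \in C^\infty(\T,\calD_h(M))$ of the family of Berezin--Toeplitz star products satisfying
\[
D_V' = P^{-1} D_V P
\]
for every vector field $V$ on $\T$. This is exactly the assertion that $D$ and $D'$ are gauge-equivalent via a self-equivalence of the family of star products, which is the intended meaning of gauge-equivalence in this context.

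Since the corollary follows by direct specialization of Theorem \ref{thm:gauge_transformation_of_flat_formal_connections_introduction} to the case where the flat formal connections in question happen to be formal Hitchin connections, there is essentially no independent obstacle to overcome. The only point worth flagging is to make explicit that the notion of gauge-equivalence appearing in the corollary refers to conjugation by a self-equivalence of the family $\{\star_\sigma^{BT}\}_{\sigma \in \T}$, as produced by the theorem; this is the natural class of gauge transformations since, as noted in the text preceding Theorem \ref{thm:gauge_transformation_of_flat_formal_connections_introduction}, arbitrary gauge transformations must preserve compatibility with the family.
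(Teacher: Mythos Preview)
Your proposal is correct and matches the paper's approach exactly: the paper presents this corollary as an immediate consequence of Theorem~\ref{thm:gauge_transformation_of_flat_formal_connections} (the same as Theorem~\ref{thm:gauge_transformation_of_flat_formal_connections_introduction}), without any additional argument, since a formal Hitchin connection is by definition a flat formal connection compatible with the family of Berezin--Toeplitz star products. Your unpacking of the definition and explicit identification of gauge-equivalence with conjugation by a self-equivalence of the family is a faithful elaboration of what the paper leaves implicit.
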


\subsection{The formal Hitchin connection at low orders}
As mentioned above, a formal connection is called compatible with a family of star products if it is a derivation with respect to the star products. 
Our main example of a formal connection is the formal Hitchin connection defined in \cite{andersen2012hitchin},
which is known to be a derivation with respect to the family of Berezin-Toeplitz star products,
\cite{andersen2012hitchin}. This fact relies on the existence of the Hitchin connection in geometric quantization, as well as the link between geometric and deformation quantization via Toeplitz operators.

This result assumes the existence of a Hitchin connection in geometric quantization, which puts several constraints on the objects involved in the construction, among them the condition that the family of K\"ahler structures be \emph{holomorphic} and \emph{rigid}, which are quite strong requirements. 

On the other hand, the explicit expression \eqref{eq:explicit_formula_FHC_introduction} that was obtained in \cite{andersen2012hitchin} makes sense in a more general situation, and therefore we can ask whether that expression in general gives a derivation of the Berezin-Toeplitz star product. This is a difficult question, because it involves the coefficients of the star product, which are in general hard to understand. But we can give an affirmative answer if we restrict our attention to the first order in the formal parameter. 
Moreover it is easy to check that the expression \eqref{eq:explicit_formula_FHC_introduction} defines a flat formal connection up to first order, and we conclude that the expression obtained in \cite{andersen2012hitchin} gives a formal Hitchin connection up to order one.

\begin{prop}
Let $M$ be a symplectic manifold with a family of compatible K\"ahler structures parametrized by a complex manifold $\T$. Then the expression \eqref{eq:explicit_formula_FHC_introduction} defines a formal connection that, modulo $h^2$, is a derivation of the family of Berezin-Toeplitz star products on $M$ and flat. Therefore it defines a formal Hitchin connection in the sense of our definition above, modulo terms of order $h^2$.
\end{prop}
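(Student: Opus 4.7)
The plan is to work modulo $h^2$ and verify both conditions (derivation and flatness) by direct computation. Writing the Berezin-Toeplitz star product as $\star^{BT}_\sigma = \cdot + h c_{1,\sigma} + O(h^2)$, where $c_{1,\sigma}$ is its leading cochain at $\sigma$, formula \eqref{eq:explicit_formula_FHC_introduction} expands as
\begin{equation*}
\tilde{A}(V)(f) = h\,\tilde{A}_1(V)(f) + O(h^2), \qquad \tilde{A}_1(V)(f) := c_1(V[F], f) + E(V)(f) - H(V)\, f.
\end{equation*}
In particular $\tilde{A}(V) = 0 \bmod h$, so $D_V := V[\cdot] + \tilde{A}(V)$ is a formal connection in the sense of our definition.

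For the derivation condition, I would substitute $D_V$ into \eqref{eq:introduction_formal_connection_is_derivation} and expand to order $h$. The constant-in-$h$ contributions cancel by the Leibniz rule for $V$, and cross-terms of the form $c_1(V[f], g)$ cancel pairwise, reducing the condition modulo $h^2$ to
\begin{equation*}
(\delta \tilde{A}_1(V))(f,g) + V[c_1](f,g) = 0,
\end{equation*}
where $\delta D(f,g) := D(fg) - D(f)\,g - f\,D(g)$ is the Hochschild coboundary for pointwise multiplication. Substituting the explicit form of $\tilde{A}_1(V)$ turns this into an identity relating $V[c_1]$, $\delta E(V)$, the cochain $c_1$ evaluated at $V[F]$, and $H(V)$. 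I would verify this identity using the Karabegov-Schlichenmaier description of $c_{1,\sigma}$, its variation along $V$, and the defining equations for $F$ and $E$ recalled in Section \ref{section: Hitchin connection}.

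For flatness, I would compute $R(V,W) := [D_V, D_W] - D_{[V,W]}$. Since $\tilde{A}(V) = O(h)$, the quadratic term $[\tilde{A}(V), \tilde{A}(W)]$ is automatically $O(h^2)$, so flatness modulo $h^2$ is equivalent to the closedness condition
\begin{equation*}
V[\tilde{A}_1(W)] - W[\tilde{A}_1(V)] - \tilde{A}_1([V,W]) = 0,
\end{equation*}
i.e., to $\tilde{A}_1$ being a closed $1$-form on $\T$ with values in differential operators on $M$. Expanding with the formula for $\tilde{A}_1$ and using the same K\"ahler-geometric inputs as in the derivation step, the identity can be checked term by term. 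The main obstacle in both verifications will be the bookkeeping of the variation of the K\"ahler structure inside $c_{1,\sigma}$ and matching it against the explicit form of $E$; crucially, none of the rigidity or holomorphicity assumptions needed to construct the full Hitchin connection are required for the order-$h$ matching, which is precisely why the formula still defines a formal Hitchin connection modulo $h^2$ in the more general K\"ahler setting considered here.
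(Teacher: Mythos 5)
Your reductions are exactly the ones the paper makes: the order-$h$ part of \eqref{eq:explicit_formula_FHC_introduction} is indeed $\tilde{A}_1(V)=c^{(1)}(V[F],\cdot)+E(V)-H(V)$ (which, via \eqref{eq:operator_E}, equals $-\tfrac14\Delta_{\tilde{G}(V)}+\tfrac12\nabla_{\tilde{G}(V)dF}+c^{(1)}(V[F],\cdot)$), the derivation condition modulo $h^2$ becomes $\delta\tilde{A}_1(V)+V[c^{(1)}]=0$ after the cross-terms $c^{(1)}(V[f],g)$, $c^{(1)}(f,V[g])$ cancel, and flatness modulo $h^2$ reduces to $d_\T$-closedness of $\tilde{A}_1$ because the commutator term is $O(h^2)$. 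However, both verifications are left as declarations of intent ("I would verify this identity using\dots", "the identity can be checked term by term"), and these verifications are the actual content of the proposition, so as it stands the proposal restates what must be proved rather than proving it. The missing computation is the variation of the first Berezin--Toeplitz coefficient: differentiating Karabegov's formula $c^{(1)}(f,g)=g(\partial f,\bar{\partial}g)$ along $V$ gives
\begin{equation*}
V[c^{(1)}](f,g)=\tfrac12\,df\,\tilde{G}(V)\,dg=\tfrac14\left(\Delta_{\tilde{G}(V)}(fg)-\Delta_{\tilde{G}(V)}(f)\,g-f\,\Delta_{\tilde{G}(V)}(g)\right),
\end{equation*}
and once one also observes that $c^{(1)}(V[F],\cdot)$ and $V[c^{(1)}](F,\cdot)=\tfrac12\nabla_{\tilde{G}(V)dF}$ are first-order operators annihilating constants (hence derivations, hence Hochschild-closed), the identity $\delta\tilde{A}_1(V)=-V[c^{(1)}]$ reduces precisely to comparing $\delta\bigl(-\tfrac14\Delta_{\tilde{G}(V)}\bigr)$ with the displayed formula. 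Without this lemma the "identity relating $V[c_1]$, $\delta E(V)$, $c_1(V[F],\cdot)$ and $H(V)$" is not established.

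For flatness, your plan of checking closedness of $\tilde{A}_1$ term by term would involve second variations of the K\"ahler data and is again not carried out; the paper sidesteps this by exhibiting a potential, namely the operator $P_1=\tfrac14\Delta-c^{(1)}(F,\cdot)$ with $\tilde{A}_1(V)=V[-P_1]$, so that $\tilde{A}_1$ is $d_\T$-exact and closedness is immediate. I recommend you supply the lemma on $V[c^{(1)}]$ and the exactness observation; with those two points your argument coincides with the paper's proof. Your final remark that no rigidity or holomorphicity of the family is needed at this order is correct and is also made in the paper.
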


We plan to investigate the relationship between the formal Hitchin connection studied by Andersen
and the formal connections obtained from Theorem \ref{thm:introduction_existence_formal_connection}
more closely
in the future.

\section{Deformation quantization}\label{section: deformation quantization}

\subsection{Star products}

Let $M$ be a Poisson manifold. We let $\C[[h]]$ denote the ring of formal power series with complex coefficients, and $C_h^\infty(M) = C^\infty(M)[[h]]$ the algebra of \emph{formal functions} on $M$, which are formal power series with coefficients in $C^\infty(M)$. Then $C_h^\infty(M)$ is an algebra over $\C[[h]]$, and we can extend the Poisson bracket linearly to make $C_h^\infty(M)$ into a Poisson algebra. This allows us to formulate the following definition.

\begin{defn}\label{defn:star_product}\index{star product|textbf}
Let $(M,\{\cdot,\cdot\})$ be a Poisson manifold. A (formal) \emph{star product} on (or \emph{deformation quantization} of)  $M$ is a $\C[[h]]$-bilinear map 
$$\star \colon C_h^\infty(M) \times C_h^\infty(M) \to C_h^\infty(M)$$ written as
\[
f \star g = \sum_{k=0}^\infty c^k(f,g)h^k,
\]
where, for $k\in \N$, the maps $c^k \colon C^\infty(M) \times C^\infty(M) \to C^\infty(M)$ are bilinear and called the \emph{coefficients} of $\star$. A star product is required to satisfy the following conditions:
\begin{enumerate}
\item associativity: $(f_1 \star f_2) \star f_3 = f_1 \star (f_2 \star f_3)$,
\item unitality: $f \star 1 = f = 1 \star f$,
\item $c^0(f_1,f_2) = f_1f_2$,
\item $c^1(f_1,f_2) - c^1(f_2,f_1) = i\{f_1,f_2\}$,
\end{enumerate}
for all $f_1,f_2,f_3 \in C^\infty(M)$.
\end{defn}

\index{star product!differential}\index{star product!null on constants}
A star product is said to be \emph{differential} if the coefficients $c^k$ are bidifferential operators, in the sense that, for a fixed $f \in C^\infty(M)$, both $c^k(f,\cdot)$ and $c^k(\cdot,f)$ are differential operators for all $k \in \N$.

\begin{defn}\index{star product!equivalence}
Two star products $\star, \star'$ on $M$ are said to be \emph{equivalent} if there is a formal power series of linear maps
\[
T = \sum_{k=0}^\infty T_k, \qquad T_k \colon C_h^\infty(M) \to C_h^\infty(M), \quad \text{$k \in \N$},
\] 
such that $T_0 = \Id$ and $T(f_1) \star' T(f_2) = T(f_1 \star f_2)$, for $f_1,f_2$ smooth functions on $M$.
\end{defn}

In the following, we will restrict attention to {\em natural star products}. These are differential star products
that satisfy the requirement that the coefficient $c^k$ is a bidifferential operator of order at most $k$.
All star products which we will encounter are of this type.

\subsection{Fedosov star products}\label{subsection: Fedosov}

We review the main ingredients of Fedosov's geometric construction
of star products \cite{fedosov1994simple}. In Section \ref{section: existence}, we will make
use of Fedovo's framework to construct formal connections.
Our exposition follows \cite{fedosov1994simple} and Waldmann's book \cite{waldmann2007poisson}.

Let $(M,\omega)$ be a symplectic manifold of dimension $m=2n$. Since any tangent space $T_x M$
is a symplectic vector space, we can consider the associated Weyl algebra.
\begin{defn}\label{def:weyl_algebra}\index{Weyl algebra}
The \emph{formal Weyl algebra} $W_x$ associated to $T_x M$, for $x \in M$, is the associative $\C$-algebra with unit, whose elements are formal power series in $h$, with formal power series on $T_x M$ as coefficients. This means that an element in $W_x$ has the form:
\[
a(y,h) = \sum_{k\in\N}\sum_{\alpha} h^k a_{k,\alpha} y^\alpha,
\]
where $(y^1,\dots,y^{m})$ are local coordinates on $T_x M$ and $\alpha = (\alpha_1,\dots,\alpha_m)$ is a multi-index. 

\end{defn}
The formal Weyl algebra is equipped with the following \emph{Moyal-Weyl product}:
\begin{equation}\label{eq:moyal_weyl_product}
a \circ_{MW} b = \sum_{k=0}^\infty \left( \frac{ih}{2} \right)^k \frac{1}{k!} \pi^{i_1j_1}\cdots\pi^{i_kj_k} \frac{\partial^k a}{\partial y^{i_1} \cdots \partial y^{i_k}}\frac{\partial^k b}{\partial y^{i_1} \cdots \partial y^{i_k}},
\end{equation}
where $\pi=\sum_{i<j}\pi^{ij}\frac{\partial}{\partial y^i}\wedge \frac{\partial}{\partial y^j}$ is the Poisson
bivector dual to $\omega_x$.
The Moyal-Weyl product is a deformation quantization of the linear symplectic space $(T_xM,\omega_x)$.

Let $W = \cup_{x\in M} W_x$. This defines a bundle of algebras over $M$, which is called the \emph{Weyl bundle}. The space of smooth sections of this bundle, $\Gamma W$, gives an associative algebra with fibre-wise multiplication. This space of section can be thought of as a ``quantized tangent bundle'' of $M$.
Note that the centre of $\Gamma W$ is formed by the elements that do not contain any $y^i$, and therefore is naturally identified with $C_h^\infty(M)=C^\infty(M)[[h]]$.

We next define a useful degree on the Weyl algebra, called the {\em total degree}. It is given
by assigning degree $1$ to all the $y^i$,  i.e. $\deg y^i = 1$ for any $i$, and $\deg h = 2$.
The Moyal-Weyl product is additive with respect to the total degree.

A differential form on $M$ with values in $W$ is a section of $W\otimes \Lambda^q T^* M$, and can be expressed as:
\[
a(x,y,h,dx) = \sum h^k a_{k,i_1,\dots,i_p,j_1,\dots,j_q} y^{i_1} \dots y^{i_p} dx^{j_1} \wedge \dots \wedge dx^{j_q}
\]
in local coordinates, where the coefficients $a_{k,i_1,\dots,i_p,j_1,\dots,j_q}$ are symmetric in the $i$'s and anti-symmetric in the $j$'s. We denote the space of these differential forms by $\Omega(M,W)$.

Note that the Moyal-Weyl product on $\Gamma W$ extends to $\Omega(M,W)$.
On the latter space
we define an operator $\delta$ in the following way:
\[
\delta(a) = \sum_i dx^i \wedge \frac{\partial a}{\partial y^i }, \qquad \text{for all $a \in W\otimes \Lambda^q T^* M$.}
\]
This operator can also be written as
\[
\delta(a) = - \left[ \frac{i}{h} \omega_{ij}dy^i dx^j,a \right] = -\frac{i}{h}\ad(\tilde{\omega}),
\]
where the commutator is with respect to the Moyal-Weyl product
and $\tilde{\omega}$ is $\omega$, seen as a section of $T^*M\otimes T^*M \subset W\otimes \wedge T^*M$.

An alternative interpretation of $\delta$ is as follows: we can define two commuting derivations $\delta$ and $\delta^*$ 
on $\Omega(M,W)$
by considering the identity morphism $TM\to TM$ as a section of $TM\otimes T^*M$. We can now insert the $TM$-part into either $\Lambda T^*M$ or $W$ and multiply the $T^*M$-part with the other factor, using the pointwise product on $W$.
The operator $\delta$ corresponds to the latter case. We define $\delta^*$ to be the operator corresponding to the former.

The operators $\delta$ and $\delta^*$ are differentials, i.e. they square to zero. Moreover, one can use $\delta^*$ as a homotopy
operator for $\delta$, which leads to the result that the cohomology of $\delta$ is concentrated in form-degree $0$
and that $H^0(\Omega(M,W),\delta)=C_h^\infty(M)$.

\begin{defn}\index{connection!symplectic}
A \emph{symplectic connection} on a symplectic manifold $(M,\omega)$ is a linear connection $\nabla$ that is torsion-free and such that $\omega$ is parallel with respect to $\nabla$, i.e. $\nabla \omega = 0$.
\end{defn}

Let us fix a symplectic connection $\nabla$ on $M$. Its curvature tensor is contracted with $\omega$ to yield an element $R \in \Omega^2(M,W)$.
As usual, $\nabla$ extends to the covariant derivative $d_{\nabla}$ on $\Omega(M,TM)$,
which we dualize and extend to $\Omega(M,W)$. It turns out that $d_{\nabla}$
is also a derivation for the fibre-wise Moyal-Weyl product.
Moreover, $d_{\nabla}$ commutes with $\delta$ and squares to $-\frac{i}{h}\ad(R)$.

Fedosov's idea is to correct the non-flatness of $-\delta + d_{\nabla}$ by finding an appropriate $r\in \Omega^1(M,W)$ such that the total operator
\begin{equation}\label{eq:D_weyl_connection}
D_r := -\delta + d_{\nabla} + \frac{i}{h}\ad(r)
\end{equation}
squares to zero. Regardless of flatness, an operator of the form of $D_r$ is a derivation of the Moyal-Weyl product defined above, i.e.
\[
D_r(a \circ_{MW} b) = D_r(a) \circ_{MW} b + a \circ_{MW} D_r(b)
\]
holds.
Using Fedosov's ansatz for $D_r$, one computes
\[
D_r^2 = \frac{i}{h}\ad\left(-\omega - \delta r + R + d_{\nabla}r + \frac{i}{h} r\circ_{MW}r \right).
\]

Hence the flatness of $D_r$ is equivalent to the fact that
\begin{equation}\label{eq:scalar_form_Weyl_connection}
\alpha = \omega + \delta r - R - d_{\nabla}r - \frac{i}{h} r\circ_{MW}r
\end{equation}
lies in the center of $\Omega(M,W)$, which coincides with $\Omega(M)[[h]]$, where we see ordinary
differential forms to be fibrewise constant polynomials on the tangent spaces. 

We call a connection of the form \eqref{eq:D_weyl_connection} \emph{abelian} if $\alpha$ satisfies this condition, i.e. if it is a scalar $2$-form. If this is the case, then by the Bianchi identity we have that $d\alpha = D_r\alpha = 0$, and so $\alpha$ is closed (i.e. $\alpha \in Z^2(M)[[h]]$) and is called the \emph{Weyl curvature}\index{Weyl curvature} of $D_r$.

The following theorem shows how to construct the appropriate $r$ in order to obtain an abelian $D_r$.

\begin{thm}{(Fedosov)}\label{thm:fedosov_char_class}
Let $\nabla$ be a symplectic connection on $M$ and 
\[
\alpha = \omega + h\alpha_1 + h^2\alpha_2 + \dots \in Z^2(M)[[h]]
\]
be a closed formal $2$-form that is a perturbation of the symplectic form. Then there exists a unique $r \in \Omega^2(M,W)$, such that the $D_r$ given by \eqref{eq:D_weyl_connection} is an abelian connection with Weyl curvature $\alpha$ and $\delta^{*} r = 0$.
\end{thm}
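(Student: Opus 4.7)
The plan is to recast the abelian condition as a fixed-point equation for $r$ and to solve it by induction on the total degree.

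First, asking that $D_r$ be abelian with Weyl curvature $\alpha$ is, by rearranging \eqref{eq:scalar_form_Weyl_connection}, the equation
\[
\delta r = (\alpha - \omega) + R + d_{\nabla} r + \tfrac{i}{h}\, r \circ_{MW} r.
\]
To invert $\delta$ I would use the homotopy operator $\delta^{-1}$ induced by $\delta^{*}$: on a form with $p$ factors of $y$ and $q$ wedge factors of $dx$ one sets $\delta^{-1} = \tfrac{1}{p+q}\delta^{*}$, and zero when $p+q=0$. This gives the Hodge-type identity $a = \delta\delta^{-1}a + \delta^{-1}\delta a + \pi_{0}(a)$, where $\pi_{0}$ projects onto $C^{\infty}_{h}(M)$. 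Imposing the Fedosov gauge $\delta^{*}r = 0$, which forces $\delta^{-1}\delta r = r$, the abelian condition becomes the fixed-point equation
\[
r = \delta^{-1}\Bigl(R + (\alpha-\omega) + d_{\nabla} r + \tfrac{i}{h}\, r \circ_{MW} r\Bigr).
\]

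Second, I would solve this equation by Picard iteration with respect to the total-degree filtration on $\Omega(M,W)$. The crucial observation is that $\delta^{-1}$ raises total degree by one (it trades a $dx$ for a $y$), $d_{\nabla}$ preserves total degree, the symplectic curvature $R$ sits in total degree $2$, the perturbation $\alpha-\omega$ is of order $h$ and hence of total degree at least $2$, and the nonlinear term $\tfrac{i}{h}\,r\circ_{MW} r$ has total degree at least $2\deg_{\mathrm{tot}}(r) - 2$. Setting $r^{(0)} := 0$ and
\[
r^{(n+1)} := \delta^{-1}\Bigl(R+(\alpha-\omega)+d_{\nabla}r^{(n)}+\tfrac{i}{h}\,r^{(n)}\circ_{MW} r^{(n)}\Bigr),
\]
an inductive check shows that $r^{(n+1)}-r^{(n)}$ has total degree at least $n+3$. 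Hence the sequence converges in the total-degree filtration to a well-defined $r$ of total degree $\geq 3$ satisfying both the fixed-point equation and the gauge $\delta^{*} r = 0$.

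Third, I would verify that the $r$ so obtained actually makes $D_{r}$ abelian with Weyl curvature exactly $\alpha$, not merely with some other scalar $2$-form. Writing $\alpha' := \omega + \delta r - R - d_{\nabla}r - \tfrac{i}{h}r\circ_{MW} r$, the construction guarantees $\delta^{-1}(\alpha'-\alpha) = 0$, while the Bianchi identity $D_{r}\alpha' = 0$ together with a further induction on total degree forces $\alpha' = \alpha$. Uniqueness of $r$ is proved by an analogous induction: if $r_{1}, r_{2}$ both solve the problem under the Fedosov gauge, their difference $s := r_{1} - r_{2}$ satisfies a linear recursion whose application of $\delta^{-1}$ strictly raises total degree, so $s$ has arbitrarily high total degree and thus vanishes.

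The main obstacle is the bookkeeping required to make the nonlinearity $\tfrac{i}{h}\, r\circ_{MW} r$ contracting in the total-degree filtration -- the division by $h$ is dangerous and must be offset by the quadratic dependence on $r$ -- and to ensure that the scalar $2$-form produced at the end of the iteration really equals the prescribed $\alpha$. Both points reduce to tracking the action of $\delta$, $\delta^{-1}$, $d_{\nabla}$, and $\circ_{MW}$ on the total-degree grading, together with the already recorded fact that the cohomology of $\delta$ is concentrated in form-degree zero.
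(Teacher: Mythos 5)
Your proposal is correct and is essentially the standard Fedosov argument that this theorem rests on (the paper itself cites \cite{fedosov1994simple} and Waldmann's book rather than reproving it): convert the abelian condition into a fixed-point equation via the normalized homotopy operator $\delta^{-1}$ under the gauge $\delta^* r = 0$, solve it by iteration in the total-degree filtration, and use the Bianchi identity plus a degree induction to show the resulting Weyl curvature is exactly $\alpha$ and that $r$ is unique. The degree bookkeeping you flag works out as you describe (note $r$ is a $1$-form, so $r\circ_{MW} r$ is half a graded commutator and $\tfrac{i}{h}r\circ_{MW}r$ indeed raises degree once $\deg_{\mathrm{tot}} r \geq 3$), so no gap remains.
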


We are now in position to define the Fedosov star product corresponding to a given Fedosov connection.
One first shows that every element $f\in C^\infty(M)[[h]]$ extends uniquly to an element 
\[
\tau(f) \in \Omega^0(M,W)[[h]]
\]
which is parallel with respect to the Fedosov connection $D_r$.
The proof amounts to breaking up the equation $D_r\tau(f) = 0$
into its homogeneous pieces with respect to the total degree. Cohomological considerations
similar to the ones in the proof of Theorem \ref{thm:fedosov_char_class}
guarantee that the extension $\tau(f)$ exists and is unique.

The \emph{Fedosov star product} $\star_{\nabla,\alpha}$ associated to the closed formal $2$-form $\alpha$ and to the symplectic connection $\nabla$ on $M$ is given by the following formula:
\begin{equation}\label{eq:fedosov_star_product}
f\star_{\nabla,\alpha} g := p(\tau(f) \circ_{MW} \tau(g)),
\end{equation}
where $p: \Gamma(W) \to C_h^\infty(M)$ is the projection of a section
of the Weyl bundle to its fibrewise constant part. 
Since $D_r$ is a derivation with respect to $\circ_{MW}$, this defines an associative product
and one checks inductively that one actually obtains a natural star product, i.e.
$\star_{\nabla,\alpha}$ is given by bidifferential operators whose component of order $h^k$ is of differential-order at most $k$ in each argument.

\begin{defn}
The \emph{characteristic $2$-form} of a Fedosov star product $\star_{\nabla,\alpha}$ is
\[
\cl (\star_{\nabla,\alpha}) =  \alpha \in Z^2(M;\R)[[h]],
\]
where $\alpha$ is the Weyl curvature of the corresponding Fedosov connection $D_r$.

The class of $\cl(\star_{\nabla,\alpha})$ is called the characteristic class of $\star_{\nabla,\alpha}$.
\end{defn}

The definition of the characteristic class of Fedosov star products generalizes to arbitrary
differential star products since
every differential star product is equivalent to 
one of Fedosov type
and two Fedosov star products are equivalent if and only if their characteristic classes coincide.
Hence one just defines the characteristic class of an arbitrary differential star product
to be the characteristic class of an equivalent star product of Fedosov type.
For a more extensive treatment of classification results, we refer the reader to \cite{deligne1995deformations}, \cite{gutt1999equivalence} as well as the exposition in \cite{waldmann2007poisson}.

We note that if we restrict attention to natural star products,
a stronger statement holds, see  \cite[Theorem 4.1]{gutt2003natural}:
every such star product is equivalent to a {\em preferred} Fedosov star product
through a {\em preferred} equivalence.
This allows one to assign not only a characteristic class, but a characteristic $2$-form
to a natural star product -- just take it to be the characeristic $2$-form of the preferred equivalent Fedosov star product.

\subsection{The Berezin-Toeplitz star product}\label{subsection: BT}

We describe the link between geometric quantization and deformation quantization due to \cite{schlichenmaier1999deformation}.

\begin{defn}\index{line bundle!prequantum}
A \emph{prequantum line bundle} over a symplectic manifold $(M,\omega)$ is the data of a complex line bundle $\calL$, equipped with a Hermitian metric $h$ and a compatible connection $\nabla$ whose curvature satisfies:
\[
F_{\nabla} = -i \omega.
\]
\end{defn}
We say that a symplectic manifold is \emph{prequantizable} if it admits a prequantum line bundle. If we assume that $M$ is a compact prequantizable K\"ahler manifold we can make the following definition.

\begin{defn}\index{Toeplitz operator|textbf}
Let $f \in C^\infty(M)$. The \emph{Toeplitz operator} $T_f^{(k)} \colon C^\infty(M;\calL^k) \to H^0(M;\calL^k)$ is the map defined by
\[
T_f^{(k)}(s) = \pi^{(k)}(fs),
\]
mapping a smooth section $s$ on $\calL^k$ to its projection onto the subspace of holomorphic sections.
\end{defn}

Schlichenmaier showed in  \cite{schlichenmaier1999deformation} that any compact K\"ahler manifold admits a natural star product on it, namely the Berezin-Toeplitz star product.

\begin{thm}[Schlichenmaier]\label{thm:BT_star_product}\index{star product!Berezin-Toeplitz}
There exists a unique star product $\star^{BT}$ for $M$, called the \emph{Berezin-Toeplitz star product}, which is expressed by:
\[
f_1 \star^{BT} f_2 = \sum_{k=0}^\infty c^{(k)}(f_1,f_2)h^k,
\]
with $c^{(k)}(f_1,f_2) \in C^\infty(M)$ determined by the requirement that for all $f_1,f_2 \in C^\infty(M)$ and for any positive integer $L$ the following estimate holds:
\[
\Bigg\lVert T_{f_1,\sigma}^{(k)} T_{f_2,\sigma}^{(k)} - \sum_{l=0}^L T_{c_\sigma^{(l)}(f_1,f_2),\sigma}^{(k)} k^{-l} \Bigg\rVert = O(k^{-(L+1)}).
\]
\end{thm}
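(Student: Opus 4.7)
The proof proceeds by constructing $\star^{BT}$ directly from the asymptotic composition of Toeplitz operators. The plan is first to establish the fundamental asymptotic expansion
\[
\Big\lVert T_{f_1}^{(k)} T_{f_2}^{(k)} - \sum_{l=0}^L T_{C_l(f_1,f_2)}^{(k)} k^{-l} \Big\rVert = O(k^{-(L+1)})
\]
for bidifferential operators $C_l \colon C^\infty(M)\times C^\infty(M)\to C^\infty(M)$, and then to set $c^{(l)} := C_l$. This expansion is the deep analytic input, obtained from microlocal analysis of the Szeg\H{o} projector on the unit circle bundle of $\calL^{-1}$, following Boutet de Monvel--Guillemin and its adaptation to the K\"ahler setting by Bordemann--Meinrenken--Schlichenmaier. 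In that framework, the $T_f^{(k)}$ appear as the $k$-th Fourier components of a generalized Toeplitz operator whose composition is governed by a symbolic calculus on the contact boundary; the asymptotic expansion, and the bidifferential nature of the $C_l$, then follow from the symbol composition formula.

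Uniqueness of the $c^{(l)}$ is immediate from the norm estimate. If $(D_l)$ and $(D_l')$ are two sequences of bidifferential operators both satisfying the expansion, their differences $E_l := D_l - D_l'$ satisfy $\lVert T_{E_0(f_1,f_2)}^{(k)} \rVert = O(k^{-1})$, which by the classical estimate $\lVert T_f^{(k)}\rVert \to \lVert f\rVert_\infty$ forces $E_0 = 0$; iterating on the remainder gives $E_l = 0$ for all $l$. The star product axioms are then verified term by term using the same expansion. The leading order $c^{(0)}(f_1,f_2) = f_1 f_2$ follows from the well-known multiplicativity $\lVert T_{f_1}^{(k)} T_{f_2}^{(k)} - T_{f_1 f_2}^{(k)} \rVert = O(k^{-1})$; the Poisson bracket condition $c^{(1)}(f_1,f_2) - c^{(1)}(f_2,f_1) = i\{f_1,f_2\}$ comes from the semiclassical commutator expansion of Toeplitz operators; unitality is trivial since $T_1^{(k)}$ is the identity on $H^0(M;\calL^k)$; and associativity follows from the associativity of operator composition, upon comparing the asymptotic expansions of $(T_{f_1}^{(k)} T_{f_2}^{(k)}) T_{f_3}^{(k)}$ and $T_{f_1}^{(k)} (T_{f_2}^{(k)} T_{f_3}^{(k)})$ and invoking the uniqueness just established.

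The main obstacle is the existence of the asymptotic expansion with bidifferential coefficients, which is a genuinely analytic statement requiring the full Boutet de Monvel--Guillemin machinery for generalized Toeplitz operators; everything else is essentially formal manipulation with the expansion. The fact that $\star^{BT}$ is natural in the sense that $c^{(l)}$ is bidifferential of order at most $l$ in each argument is more delicate: it can either be extracted by tracking the orders of the symbols appearing in the composition formula, or obtained a posteriori from the classification theory recalled in Section \ref{subsection: Fedosov}, which identifies $\star^{BT}$ up to equivalence with a preferred Fedosov star product and thus with a natural one.
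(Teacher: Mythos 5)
The paper does not prove Theorem \ref{thm:BT_star_product}: it is imported from \cite{schlichenmaier1999deformation}, and your outline reproduces the standard argument of that literature --- the asymptotic expansion of $T_{f_1}^{(k)}T_{f_2}^{(k)}$ with bidifferential coefficients obtained from the Boutet de Monvel--Guillemin calculus for the Szeg\H{o} projector on the unit circle bundle, uniqueness of the $c^{(l)}$ via $\lVert T_f^{(k)}\rVert \to \lVert f\rVert_\infty$, and the star product axioms (leading term, commutator/Poisson bracket, unitality, associativity) read off from the expansion together with that uniqueness. This is correct and is essentially the proof of the cited source, so there is nothing to compare within the paper itself. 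One caution on your closing remark: the proposed \emph{a posteriori} route to naturality is not valid as stated, since being equivalent to a preferred Fedosov (hence natural) star product does not make a star product natural --- an equivalence $T=\mathrm{id}+O(h)$ built from differential operators of uncontrolled order can destroy the bound on the differential order of the coefficients, and the result of \cite{gutt2003natural} recalled in Subsection \ref{subsection: Fedosov} goes in the opposite direction (natural $\Rightarrow$ equivalent to Fedosov). Naturality of $\star^{BT}$, which the paper does use later, has to come from tracking orders in the symbolic calculus, i.e.\ your first route, cf.\ \cite{karabegov2001identification}; since naturality is not part of the statement of Theorem \ref{thm:BT_star_product} itself, this does not affect the correctness of your main argument.
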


Karabegov and Schlichenmaier proved \cite{karabegov2001identification} that the Berezin-Toeplitz star product is of \emph{Wick type}. 
This means that for any locally defined functions $f$ and $g$ with $f$ anti-holomorphic and $g$ holomorphic, and any function $h$ one has
$$f\star h = fh \quad \textrm{and} \quad h\star g = hg.$$

\begin{remark}
A star product is said to be \emph{with separation of variables} if it has the same property characterizing star products of Wick type, but with the roles of holomorphic and anti-holomorphic switched. For any star product $\star$, we can define the \emph{opposite} star product $\star_o$ by setting $f \star_o g := g \star f$. Therefore, if $\star$ is a of Wick type, then $\star_o$ is with separation of variables.
\end{remark}

Gammelgaard \cite{gammelgaard2010universal} showed that star products with separation of variables can be expressed locally in a graph theoretical way, with weights determined by the automorphisms of the graphs.

The formal Hitchin connection that we define in the next section is closely related to the Berezin-Toeplitz star product.

\section{The Hitchin connection}\label{section: Hitchin connection}

Here we review briefly the construction of the Hitchin connection, in the differential geometric version of  \cite{andersen2012hitchin}. The Hitchin connection was introduced by Hitchin in \cite{hitchin1990flat} as a connection over the Teichm\"uller space in the bundle one obtains by applying geometric quantization to the moduli spaces of flat $SU(n)$ connections. Furthermore Hitchin proved that this connection is projectively flat. Hitchin's construction was motivated by Witten's study \cite{witten1989quantum} of quantum Chern-Simons theory in $2 + 1$ dimensions. In \cite{andersen2012hitchin} a differential geometric construction of the Hitchin connection which works for a more general class of manifolds was provided.

\subsection{Smooth families of K\"ahler structures}
Let $(M,\omega)$ be a symplectic manifold and let $\T$ be a smooth manifold that parametrizes smoothly a family of K\"{a}hler structures on $M$. This means that we have a smooth map 
\[
I \colon \T \to C^\infty(M, \End(TM))
\]
that associates to each $\sigma \in \T$ an integrable and compatible almost complex structure on $M$. The requirement that the map $I$ is smooth means that it defines a smooth section of the pullback bundle
\[
\pi_M^*(\End(TM)) \to \T \times M,
\]
where $\pi_M \colon \T \times M \to M$ denotes the canonical projection map.

The symplectic form $\omega$ is non-degenerate, therefore we obtain from it an isomorphism
$i_\omega \colon TM_\C \to TM^*_\C$
 by contraction in the first entry.
We can use this isomorphism to define the bivector field:
\[
 \tilde\omega = - (i_\omega^{-1} \otimes i_\omega^{-1})(\omega),
\] 
which satisfies the identity $\omega \cdot \tilde\omega = \tilde\omega \cdot \omega = \Id$, where the dot indicates contraction of tensors in their entries closest to the dot, which is relevant when working with non-symmetric tensors. For example $\omega \cdot \tilde\omega$ means that the right-most entry of $\omega$ is contracted with the left-most one of $\tilde\omega$.

Similarly we obtain a type-interchanging isomorphism $i_{g_\sigma} \colon TM_\C \to TM^*_\C$,
induced by the K\"{a}hler metric on $M_\sigma$. The two isomorphisms are related by the equation: $i_{g_\sigma} = I_\sigma i_\omega$. From the fact that $g$ and $\omega$ have type $(1,1)$ it follows that these two isomorphisms exchange types. As done for $\omega$, we can define the inverse metric tensor by
\[
\tilde g = (i_g^{-1} \otimes i_g^{-1}) (g) = \Id,
\]
which gives a symmetric bivector field satisfying the relation $g \cdot \tilde g = \tilde g \cdot g$. This bivector field is related to the bivector field associated to $\omega$ by
$\tilde \omega = I \cdot \tilde g$.

On an (almost) complex manifold $M$ we have a natural decomposition of the complexified tangent bundle:
\[
TM_\C = T'M_I \oplus T''M_I,
\]
where the two summands are the eigenspaces of the endomorphism $I$ for the eigenvalues $i$ and $-i$, respectively:
\[
T'M_I = \ker(I-i\Id), \qquad T''M_I = \ker(I+i\Id).
\]
Sections of the first subspace are said to be vector fields \emph{of type $(1,0)$}, and sections of the second subspace are vector fields \emph{of type $(0,1)$}.
The decomposition is explicitly given by the projections to the two subspaces:
\[
\pi_I^{1,0} = \frac{1}{2}(\Id -iI), \qquad \pi_I^{0,1} = \frac{1}{2}(\Id +iI),
\]
and for a vector field $X$ we denote its decomposition by $X = X_I' + X_I''$.

\subsection{A symmetric bivector field}

We now assume that $(M,\omega)$ is a symplectic manifold equipped with a smooth family of compatible almost complex structures $I$, parametrized by $\T$. We can define a bivector field $\tilde{G}(V) \in C^\infty (M,TM_\C \otimes TM_\C)$ by requiring that the relation
\begin{equation}\label{eq:definition_of_G(V)}
V[I] = (\Id \otimes \ i_\omega)(\tilde{G}(V))
\end{equation}
holds for all vector fields $V$. If we differentiate the identity $\tilde{g} = - I \cdot \tilde{\omega}$ along a vector field $V$ on $\T$, we get that
\[
V[\tilde{g}] = - V[I] \cdot \tilde{\omega} = - \tilde{G} (V),
\]
and since $\tilde{g}$ is a symmetric bivector field, so is $\tilde{G}(V)$. Moreover, because of the types of $V[I]$ and $\tilde{\omega}$, we get a decomposition
\[
\tilde{G}(V) = G(V) + \bar{G}(V),
\]
where ${G}(V)_\sigma\in C^\infty (M,S^2(T'M_\sigma))$ and $\bar{G}(V)_\sigma\in C^\infty (M,S^2(T''M_\sigma))$.


Recalling the identity $g = \omega \cdot I$, we obtain a formula for the variation of the K\"ahler metric:
\[
V[g] = \omega \cdot V[I] = \omega \cdot \tilde{G}(V)\cdot \omega,
\]
and the $(1,1)$-part of $V[g]$ vanishes because of the types of $\omega$ and $\tilde{G}(V)$.

Before defining the Hitchin connection, we need to define a certain differential operator associated to a bivector field. 
From a symmetric holomorphic bivector field $Z \in C^\infty (M,S^2(T'M_\sigma))$ we can obtain a holomorphic bundle map $Z \colon T'M_\sigma^* \to T'M_\sigma$ by contraction. We define the operator $\Delta_Z$ to be the composition:
\begin{multline*}
C^\infty(M,\calL^k) \xrightarrow{\nabla_\sigma^{(1,0)}} C^\infty(M, T'M_\sigma^* \otimes \calL^k) \xrightarrow{Z \otimes \Id} C^\infty(M, T'M_\sigma \otimes \calL^k)\\
\xrightarrow{\tilde{\nabla}_\sigma^{(1,0)} \otimes \Id + \Id \otimes \nabla_\sigma^{(1,0)}} C^\infty(M,T'M_\sigma^* \otimes T'M_\sigma \otimes \calL^k) \to C^\infty(M,\calL^k),
\end{multline*}
where $\tilde{\nabla}_\sigma^{(1,0)}$ is the holomorphic part of the Levi-Civita connection, and the last arrow is the trace. 

This operator can be expressed in a more concise way as follows. Define the operator
\[\label{page:nabla_square}
\nabla_{X,Y}^2 = \nabla_X\nabla_Y - \nabla_{\nabla_X Y},
\]
which is tensorial and symmetric in the vector fields $X$ and $Y$. Hence it can be evaluated on a symmetric bivector field and we have:
\[
\Delta_Z = \nabla_Z^2 + \nabla_{\delta(Z)},
\]
where $\delta(Z)$ denotes the divergence of the bivector field $Z$.

The previous construction can be done for any line bundle $\calL$ over $M$. In particular, if we consider the trivial line bundle over $M$ with the trivial connection, then the sections are just functions on $M$, and the operator $\Delta_{\tilde{g}}$ (where $\tilde{g}$ denotes the bivector field obtained by raising both indices of the metric tensor) coincides with the Laplace--Beltrami operator $\Delta$.
\label{page:Delta_Z}

\subsection{Holomorphic and rigid families}

The explicit construction of a Hitchin connection in \cite{andersen2012hitchin} is for a compact symplectic manifold equipped with a smooth family of K\"ahler structures that satisfy two additional properties, which we shall explain below.

Assuming that the manifold $\T$ has a complex structure, it makes sense to require the family $I$ is a holomorphic map from $\T$ to the space of complex structures. We make this requirement precise as follows:

\begin{defn}
Let $\T$ be a complex manifold, and $I$ a family of K\"{a}hler structures on $M$ that is parametrized by $\T$. We say that $I$ is \emph{holomorphic} if:
\[
V'[I] = V[I]' \quad \text{and} \quad V''[I] = V[I]'',
\]
for any vector field $V$ on $\T$.
\end{defn}

The second condition is the \emph{rigidity} of the family of K\"{a}hler structures.

\begin{defn}\index{K\"ahler structure!family!rigid}
We say that the family $I$ of K\"{a}hler structures on $M$ is \emph{rigid} if
\begin{equation}\label{eq:rigidity_condition}
\nabla_{X''} G(V) = 0,
\end{equation}
for all vector fields $V$ on $\T$ and $X$ on $M$.
\end{defn}

In other words, the family $I$ is rigid if $G(V)$ is a holomorphic section of $S^2(T'M)$,
for any vector field $V$ on $\T$. 

\begin{remark}
The expression {\em rigid family} is used in this context for the following reason (the notion was first introduced in \cite{andersen2012hitchin}): it might be possible to extend a rigid family to a bigger family of rigid structures whose dimension at $\sigma$ in the family is given by $\dim H^0(M, S^2(T'M))$, but beyond this the family does not deform any further. Thus it is rigid in this sense. 
\end{remark}


\subsection{The Hitchin connection}

The prequantum space $\calP_k = C^\infty(M, \calL^k )$ forms the fibre of a trivial vector bundle over $\T$ of infinite rank,
\begin{equation}
\hat{\calP}_k = \T \times \calP_k.
\end{equation}
Let $\nabla^t$ denote the trivial connection on this bundle.

\begin{defn}\index{Hitchin connection|textbf}
A \emph{Hitchin connection} in the bundle $\hat{\calP}_k$ is a connection of the form
\begin{equation}\label{eq:hitchin_connection_trivial_plus_a}
\nabla = \nabla^t + a,
\end{equation}
where $a \in \Omega^1(\T, \D(M, \calL^k ))$ is a one-form on $\T$ with values in the space of differential operators on sections of $\calL^k$, such that $\nabla$ preserves the quantum subspaces
\[
\calQ_k(\sigma) = H^0(M_\sigma , \calL^k )
\]
of holomorphic sections of the $k$-th power of the prequantum line bundle, inside each fibre of $\hat{\calP}_k$.
\end{defn}

The existence of a Hitchin connection in the bundle $\hat{\calP}_k$ implies that the subspaces $\calQ_k(\sigma)$ form a subbundle $\hat{\calQ}_k$, because it can be trivialized locally through parallel transport by $\nabla$.

We are now ready to state the main result of \cite{andersen2012hitchin}, showing the existence of the Hitchin connection.

\begin{thm}[Andersen]\label{thm:existence_h_connection}
Let $(M,\omega)$ be a compact, prequantizable, symplectic manifold and assume that $H^1(M;\R)=0$ as well as that there is an $n \in \Z$ such that the first Chern class of $(M, \omega)$ coincides with $n\left[\frac{\omega}{2\pi}\right] \in H^2(M;\Z)$. Moreover suppose that $I$ is a rigid holomorphic family of K\"{a}hler structures on $M$, parametrized by a complex manifold $\T$. Then there exists a Hitchin connection in the bundle $\hat{\calQ}_k$ over $\T$, given by the following expression:
\[
\hat{\nabla}_V = \nabla_V^t + \frac{1}{4k+2n} \{ \Delta_{G(V)} + 2\nabla_{G(V)\cdot dF} + 4kV'[F]\},
\]
where $\nabla_V^t$ is the trivial connection in $\hat{\calP}_k$, and $V$ is any smooth vector field on $\T$.
\end{thm}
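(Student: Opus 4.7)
The approach I would take is the standard strategy for Hitchin-type connections: first translate the preservation of the holomorphic subbundle $\hat{\calQ}_k\subset\hat{\calP}_k$ into an operator identity on each K\"ahler fibre, then verify that the explicit formula for $\hat{\nabla}$ satisfies it, using rigidity and the first Chern class condition in essential ways.

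First I would set up the characterization. A smooth section $s$ of $\hat{\calP}_k$ lies in $\hat{\calQ}_k$ exactly when $\nabla^{0,1}_\sigma s = 0$ for every $\sigma\in\T$, where $\nabla^{0,1}_\sigma := \pi^{0,1}_{I_\sigma}\circ\nabla$ is the $(0,1)$-part of the prequantum connection. Differentiating this equation along a vector field $V$ on $\T$ and using $V[\pi^{0,1}] = \tfrac{i}{2}V[I]$, the condition that $\hat{\nabla}_V = \nabla^t_V + a(V)$ preserves $\hat{\calQ}_k$ reduces to the requirement
\[
[\nabla^{0,1}_\sigma, a(V)]\, s = -\tfrac{i}{2}V[I]\cdot\nabla s
\]
for every holomorphic $s$. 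Via the definition $V[I] = \tilde G(V)\cdot\omega$, the right-hand side acquires a $G(V)$-contribution (the $\bar G(V)$-piece is annihilated after passing to $(0,1)$-type on holomorphic sections), so one is led naturally to the ansatz $a(V) = c\bigl(\Delta_{G(V)} + \text{lower-order terms}\bigr)$, motivated by the fact that $\Delta_{G(V)}$ is the canonical second-order operator whose symbol is built from the holomorphic symmetric bivector $G(V)$.

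The substantive part of the proof is the computation of $[\nabla^{0,1}_\sigma, \Delta_{G(V)}]$ acting on holomorphic sections. This uses the curvature identity $F_\nabla = -i\omega$ on $\calL$ (which contributes a factor $-ik\omega$ on $\calL^k$), the formula $\Delta_{G(V)} = \nabla^2_{G(V)} + \nabla_{\delta(G(V))}$, and the standard curvature identity for the second covariant derivative. The rigidity hypothesis $\nabla_{X''}G(V)=0$ enters decisively: it eliminates the terms arising from differentiating $G(V)$ in the antiholomorphic direction, so that the commutator produces exactly the sought-for inhomogeneous term $-2ik\,\omega\cdot G(V)\cdot\nabla s$, together with a residual term involving the Ricci form of $M_\sigma$. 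Invoking the first Chern class hypothesis $c_1(M,\omega)=n[\omega/2\pi]$ and $H^1(M;\R)=0$, one writes this Ricci form as $n\omega + 2i\partial\bar\partial F$ for a global Ricci potential $F$; the $n\omega$ piece contributes the additional $2n$ to the normalization, while the $\partial\bar\partial F$ piece is precisely what is cancelled by the lower-order corrections $2\nabla_{G(V)\cdot dF}$ and $4kV'[F]$ (here $V'$ denotes the $(1,0)$-part of $V$ with respect to the complex structure on $\T$, which enters because $V[I]$ is of pure type).

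The main obstacle is precisely this commutator computation. It is delicate because one must carefully track holomorphic versus antiholomorphic types, distinguish between the Chern connection on $TM$ and the prequantum connection on $\calL^k$ when commuting covariant derivatives, and coordinate the rigidity condition with the Ricci-potential machinery. Collecting coefficients forces the normalization constant $c = 1/(4k+2n)$, which is well defined for $k$ sufficiently large. Once $\hat{\nabla}$ has been shown fibrewise to preserve $\hat{\calQ}_k$, smoothness of the induced connection follows from smoothness of $I$, $G(V)$ and $F$ in $\sigma$, together with the fact that $\hat{\calQ}_k$ is locally trivialised by parallel transport.
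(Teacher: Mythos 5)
This theorem is quoted in the paper from \cite{andersen2012hitchin} without an internal proof, so there is nothing in the text to compare line by line; your outline is, however, essentially Andersen's original argument: the variational characterization of preservation of $\hat{\calQ}_k$ via $\nabla^{0,1}_\sigma$ and $V[\pi^{0,1}]=\tfrac{i}{2}V[I]$, reduction to the $G(V)$-part of $V[I]$ on holomorphic sections, the commutator of $\Delta_{G(V)}$ with the $(0,1)$-derivative using $F_\nabla=-i\omega$, rigidity to eliminate the $\nabla_{X''}G(V)$ terms, and the Ricci potential $\rho = n\omega + 2i\partial\bar\partial F$ (available because $H^1(M;\R)=0$ and $c_1(M,\omega)=n\left[\tfrac{\omega}{2\pi}\right]$) to fix the corrections $2\nabla_{G(V)\cdot dF}+4kV'[F]$ and the normalization $\tfrac{1}{4k+2n}$. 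The one reservation is that the decisive commutator identity is described rather than actually carried out, and the holomorphicity of the family is used there (to relate $V'[F]$, $G(V)$ and the variation of the Ricci potential) somewhat more substantively than your parenthetical remark suggests; as an approach, though, it is the correct one.
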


\section{Formal connections}\label{section: formal connections}

The idea of formal connections arises as a generalization of the formal Hitchin connection that was defined in \cite{andersen2012hitchin} as the analogue of the Hitchin connection from geometric quantization. In that paper it was  shown that the formal Hitchin connection is flat under certain conditions. Its trivialization up to first order has been given  by the first named author of this paper together with Gammelgaard in \cite{andersen2011hitchin}.

Let $M$ be a symplectic manifold and $\T$ a smooth manifold parametrizing a family of star products on $M$.
We let $C_h$ be the trivial fibre bundle over $\T$ with fibre $C^\infty(M)[[h]]$, i.e.
\[
C_h = \T \times C^\infty(M)[[h]].
\]

Let $\D(M)$ denote the space of differential operators on $M$, and let $\D_h(M) = \D(M)[[h]]$ denote formal differential operators on $M$, which are formal power series with coefficients in $\D(M)$.

\begin{defn}
A \emph{formal connection} $D$ is a connection in the bundle $C_h$ over $\T$ that can be written as
\begin{equation}\label{eq:formal_connection}
D_V f = V[f] + A(V)(f),
\end{equation}
where $A$ is a smooth $1$-form on $\T$ with values in $\D_h(M)$ such that $A = 0 \, \mathrm{mod} \, h$, $f$ is a smooth section of $C_h$, $V$ is any smooth vector field on $\T$, and $V[f]$ denotes the derivative of $f$ along $V$.
\end{defn}

The operator $A(V)(f)$ can be expressed as a series of differential operators
\[
A(V)(f) = \sum_{k=1}^\infty A^{k}(V)(f)h^k,
\]
where each $A^{k}$ is a smooth $1$-form on $\T$ with values in $\D(M)$.

Normally we are interested in looking at formal connections in the presence of a family of star products on the manifold, and then we require the following compatibility:

\begin{defn}\label{defn:formal_connections_for_family_of_products}
Let $\{\star_\sigma\}_{\sigma\in \T}$ be a family of star products on $M$. We say that a formal connection $D$ is \emph{compatible with the family of star products} $\{\star_\sigma\}_{\sigma \in \T}$ if $D_V$ is a derivation of $\star_\sigma$ for every vector field $V$ and every $\sigma \in \T$, that is, the following equality holds:
\begin{equation}\label{eq:formal_connection_is_derivation}
D_V(f \star_\sigma g) = D_V(f) \star_\sigma g + f \star_\sigma D_V( g) 
\end{equation}
for all smooth sections $f$ and $g$ of $C_h$.
\end{defn}

If the family of star products is natural, we also require the $1$-form $A$ to consist of natural differential operators,
i.e. the degree as a differential operator of the component $A^k$ is bounded by $k$.

\subsection{The formal Hitchin connection associated to geometric quantization}\label{subsection: formal Hitchin}

\begin{defn}\label{defn:formal_hitchin_connection}
Let $M$ be a symplectic manifold with a family of compatible almost complex structures parametrized by a complex manifold $\T$, so that for any $\sigma \in \T$, the manifold $M_\sigma$ is K\"ahler. Let $\{\star_\sigma^{BT}\}_{\sigma\in \T}$ be the associated family of Berezin-Toeplitz star products, see Subsection \ref{subsection: BT}.
 A \emph{formal Hitchin connection} for $\T$ is a formal connection which is compatible with  $\{\star_\sigma^{BT}\}_{\sigma\in \T}$ and which is flat.
\end{defn}

The Hitchin connection $\hat{\nabla}$ in $\hat{\calQ}_k$ which we discussed in the previous section induces a connection $\hat{\nabla}^e$ in the endomorphism bundle $\End(\hat{\calQ}_k)$. The following result establishes the existence of a \emph{formal Hitchin connection} under the same assumptions as in Theorem \ref{thm:existence_h_connection}.

\begin{thm}[Andersen]\label{thm:formal_h_connection}\index{formal connection!Hitchin|textbf}
There is a unique formal connection $D$, written as $D_V = V + \tilde{A}(V)$, which satisfies
\begin{equation}
\hat{\nabla}_V^e T_f^{(k)} \sim T_{(D_V f)(1/(2k+n))}^{(k)}
\end{equation}
for all smooth sections $f$ of $C_h$ and all smooth vector fields $V$ on $\T$. 
Here the symbol $\sim$ has the following meaning: for any positive integer $L$ we have that
\[
\Bigg\lVert \hat{\nabla}_V^e T_f^{(k)} - \left( T_{V[f]}^{(k)} + \sum_{l=1}^L T_{\tilde{A}_V^{(l)}f}^{(k)}\frac{1}{(2k+n)^l} \right) \Bigg\rVert = O(k^{-(L+1)})
\]
uniformly over compact subsets of $\T$ for all smooth maps $f \colon \T \to C^\infty(M)$.
\end{thm}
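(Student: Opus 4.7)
The strategy is to \emph{define} the formal connection $D$ by extracting it from the asymptotic expansion of $\hat{\nabla}_V^e T_f^{(k)}$, using the Berezin--Toeplitz calculus of Theorem \ref{thm:BT_star_product} to reorganize everything as an asymptotic series of Toeplitz operators. The parameter $h$ will correspond to $\frac{1}{2k+n}$, and matching coefficients in this parameter will determine the operators $\tilde{A}^{(l)}(V)$ one by one.

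\textbf{Uniqueness.} Suppose $D$ and $D'$ both satisfy the asymptotic relation. Then for every smooth section $f$ of $C_h$ and every vector field $V$, the function $g(h) := ((D_V - D'_V) f)(h)$ satisfies $\|T_{g(1/(2k+n))}^{(k)}\| = O(k^{-L})$ for every $L$. Since the Toeplitz quantization satisfies $\lim_k \|T_\phi^{(k)}\| = \|\phi\|_\infty$, the leading coefficient of $g$ must vanish, and iterating order by order in $h$ forces $g = 0$. Hence $D = D'$.

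\textbf{Existence.} The induced connection on the endomorphism bundle satisfies, for a smoothly varying family of operators $T_f^{(k)}$,
\begin{equation*}
\hat{\nabla}_V^e T_f^{(k)} = V[T_f^{(k)}] + [u_k(V), T_f^{(k)}], \quad u_k(V) = \frac{1}{4k+2n}\bigl\{\Delta_{G(V)} + 2\nabla_{G(V)\cdot dF} + 4k V'[F]\bigr\}.
\end{equation*}
The first step is to isolate the obvious Toeplitz piece $T_{V[f]}^{(k)}$ coming from differentiating $f$, leaving the operator $V[\pi^{(k)}]\cdot f$ coming from the variation of the Bergman projection. Using the standard asymptotic expansion of $\pi^{(k)}$ and its derivatives in the family of Kähler structures, one expresses $V[\pi^{(k)}](f)$ as an asymptotic series $\sum_{l\geq 1}T_{B_l(V)(f)}^{(k)}(2k+n)^{-l}$ with $B_l(V)$ differential operators. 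For the commutator $[u_k(V), T_f^{(k)}]$, one separates the $4kV'[F]$ term, whose commutator with $T_f^{(k)}$ reduces via Theorem \ref{thm:BT_star_product} to $T_{V'[F]\star^{BT} f - f \star^{BT} V'[F]}^{(k)}$ divided by the appropriate power of $k$; the remaining terms $\Delta_{G(V)}$ and $\nabla_{G(V)\cdot dF}$ are handled by the Toeplitz calculus for differential operators applied to Toeplitz operators, using that on holomorphic sections these operators are asymptotic to Toeplitz operators whose symbols are computable contractions of $G(V)$ with covariant derivatives. Collecting all contributions by powers of $\frac{1}{2k+n}$ and defining $\tilde{A}^{(l)}(V)$ to be the symbol of the coefficient of $(2k+n)^{-l}$ produces the desired formal connection; the asymptotic estimate is then automatic by construction.

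\textbf{Main obstacle.} The principal technical difficulty is controlling the two "non-obvious" ingredients simultaneously: the asymptotic expansion of the derivative of the Bergman projection along $V$, which requires the rigidity and holomorphicity of the family (to ensure $G(V)$ is a holomorphic symmetric bivector so that its contractions with covariant derivatives produce differential operators acting on holomorphic sections with clean Toeplitz asymptotics), and the reorganization of $[\Delta_{G(V)}, T_f^{(k)}]$ as a Toeplitz series whose symbols are \emph{differential} in $f$. Both rely on uniform asymptotics of Toeplitz operators and their commutators with second-order differential operators; once these expansions are in place, matching powers of $\frac{1}{2k+n}$ yields the coefficients of $\tilde{A}(V)$, and uniformity over compacta of $\T$ follows from the smoothness of the family of Kähler structures.
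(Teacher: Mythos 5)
First, note that this theorem is not proved in the present paper at all: it is quoted (with attribution to Andersen) from \cite{andersen2012hitchin}, and the paper only records the resulting explicit formula \eqref{expression: HC}. So the comparison must be with Andersen's original argument. Your overall route is the same one: uniqueness from the norm asymptotics $\lim_k\lVert T^{(k)}_\phi\rVert=\lVert\phi\rVert_\infty$ applied order by order (this half of your argument is fine), and existence by writing $\hat{\nabla}^e_V T^{(k)}_f = V[T^{(k)}_f]+[u_k(V),T^{(k)}_f]$ and expanding each piece as an asymptotic series of Toeplitz operators in powers of $\frac{1}{2k+n}$.

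The gap is in the existence half: the two items you label the ``main obstacle'' are not technical details to be deferred, they are the actual content of the theorem, and your sketch gives no mechanism for them. There is no ``standard asymptotic expansion of $\pi^{(k)}$ and its derivatives in the family'' available off the shelf; in the original proof the term $V[\pi^{(k)}]M_f$ is made computable precisely by using that the Hitchin connection preserves the subbundle $\hat{\calQ}_k$, which (together with $\pi^{(k)}V[\pi^{(k)}]\pi^{(k)}=0$, obtained by differentiating $\pi^{(k)}\pi^{(k)}=\pi^{(k)}$) trades the projection-derivative terms for compressions $\pi^{(k)}u_k(V)\pi^{(k)}$ of the explicit second-order operator in Theorem \ref{thm:existence_h_connection}; these compressions, and likewise $[\Delta_{G(V)},T^{(k)}_f]$, are then expanded via the asymptotic results on $\pi^{(k)}D\,T^{(k)}_f\pi^{(k)}$ for differential operators $D$ from \cite{andersen2006asymptotic} and the Karabegov--Schlichenmaier identification, which is exactly where rigidity and holomorphicity of the family are consumed. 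Relatedly, your closing claim that ``the asymptotic estimate is then automatic by construction'' is not justified: matching powers of $\frac{1}{2k+n}$ only defines candidate operators $\tilde{A}^{(l)}(V)$ once one has already \emph{proved} that the full expression admits an asymptotic expansion by Toeplitz operators to all orders, with $O(k^{-(L+1)})$ remainders uniform on compact subsets of $\T$ and with coefficients that are differential in $f$ and smooth in $\sigma$. As written, your existence argument is a plan that presupposes these expansion theorems rather than a proof.
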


In \cite{andersen2012hitchin} the following explicit formula for $\tilde{A}$ is given:
\begin{equation}\label{expression: HC}
\tilde{A}(V)(f) = -V[F]f + V[F] \star^{BT} f + h(E(V)(f) - H(V)\star^{BT} f),
\end{equation}
where $E$ is a $1$-form on $\T$ with values in $\D(M)$ and $H$ is a $1$-form with values in $C^\infty(M)$ given by $H(V) = E(V)(1)$. This result has been further refined by the first named author of this paper and Gammelgaard, who obtained an explicit formula for $E$ in \cite{andersen2011hitchin} which reads:
\begin{equation}\label{eq:operator_E}
E(V)(f) = -\frac{1}{4}(\Delta_{\tilde{G}(V)}(f) - 2\nabla_{\tilde{G}(V)dF}(f) -2\Delta_{\tilde{G}(V)}(F)f -2nV[F]f).
\end{equation}
From this equation we immediately get an expression for $H$:
\[
H(V) = E(V)(1) = \frac{1}{2}(\Delta_{\tilde{G}(V)}(F) + nV[F]).
\]

We can summarize the previous results by writing the following formula for the formal Hitchin connection studied by Andersen:
\begin{equation}\label{eq:formal_h_connection_explicit}
\begin{split}
D_V f =& V[f] -\frac{1}{4}h\Delta_{\tilde{G}(V)}(f) + \frac{1}{2}h\nabla_{\tilde{G}(V)dF}(f) + V[F]\star^{BT}f -V[F]f \\
& -\frac{1}{2}h(\Delta_{\tilde{G}(V)}(F)\star^{BT} f -nV[F]\star^{BT}f -\Delta_{\tilde{G}(V)}(F)f -nV[F]f).
\end{split}
\end{equation}

The following two propositions, proved in \cite{andersen2012hitchin}, assert that the formal connection constructed in Theorem \ref{thm:existence_h_connection} is a derivation with respect to the Berezin-Toeplitz star product (thus it is compatible with the family of Berezin-Toeplitz star products) and that it is flat whenever the Hitchin connection is projectively flat.

\begin{prop}\label{prop:FHC_is_derivation}\index{formal connection!derivation property}
The formal operator $D_V$ is a derivation with respect to the star product $\star_\sigma^{BT}$ for each $\sigma \in \T$, meaning that it satisfies the relation:
\begin{equation}\label{eq:derivation_relation}
D_V(f_1 \star^{BT} f_2) = D_V(f_1 ) \star^{BT} f_2 + f_1 \star^{BT} D_V(f_2)
\end{equation}
for all $f_1,f_2 \in C^\infty(M)$. 
\end{prop}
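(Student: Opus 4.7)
The key observation is that $\hat{\nabla}^e$, being the connection induced by $\hat{\nabla}$ on the endomorphism bundle $\End(\hat{\calQ}_k)$, is automatically a derivation with respect to composition of endomorphisms. That is, for any two smooth sections $S_1, S_2$ of $\End(\hat{\calQ}_k)$ and any vector field $V$ on $\T$,
\[
\hat{\nabla}_V^e(S_1 \circ S_2) = (\hat{\nabla}_V^e S_1) \circ S_2 + S_1 \circ (\hat{\nabla}_V^e S_2).
\]
My plan is to apply this derivation property to the composition $T_{f_1}^{(k)} \circ T_{f_2}^{(k)}$ of Toeplitz operators and then extract the defining asymptotic expansion of $D_V$ on both sides.

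First, I would invoke the Berezin--Toeplitz expansion of Theorem \ref{thm:BT_star_product} to express $T_{f_1}^{(k)} \circ T_{f_2}^{(k)}$ asymptotically in $k^{-1}$ as a series of Toeplitz operators with symbols given by the coefficients of $\star_\sigma^{BT}$ (with $h$ replaced by $1/(2k+n)$, which differs from $1/k$ only by terms compatible with asymptotic expansions). Applying $\hat{\nabla}_V^e$ and using the defining property of $D$ in Theorem \ref{thm:formal_h_connection}, the left-hand side $\hat{\nabla}_V^e(T_{f_1}^{(k)} \circ T_{f_2}^{(k)})$ becomes asymptotic, uniformly on compacta in $\T$, to $T_{(D_V(f_1 \star_\sigma^{BT} f_2))(1/(2k+n))}^{(k)}$.

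Next, I would expand the right-hand side $(\hat{\nabla}_V^e T_{f_1}^{(k)}) \circ T_{f_2}^{(k)} + T_{f_1}^{(k)} \circ (\hat{\nabla}_V^e T_{f_2}^{(k)})$ using Theorem \ref{thm:formal_h_connection} on each factor to replace the $\hat{\nabla}_V^e$-derivatives by Toeplitz operators with symbols $D_V f_i$, and then use the Berezin--Toeplitz expansion once more to combine each resulting composition into a single Toeplitz operator. This yields an asymptotic expansion given by the symbol $D_V(f_1)\star_\sigma^{BT} f_2 + f_1 \star_\sigma^{BT} D_V(f_2)$, again evaluated at $h = 1/(2k+n)$.

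Finally, I would appeal to the uniqueness of asymptotic expansions by Toeplitz operators: two formal power series $u,v \in C^\infty(M)[[h]]$ agree if $\|T_{u(1/(2k+n))}^{(k)} - T_{v(1/(2k+n))}^{(k)}\| = O(k^{-\infty})$, since the leading symbol of a Toeplitz operator is determined by the operator norm. Comparing the two asymptotic expansions order by order in $k^{-1}$, and hence order by order in $h$, yields the derivation identity \eqref{eq:derivation_relation} for $D_V$ at each fixed $\sigma \in \T$. The main subtlety will be keeping careful track of the interchange between asymptotic series in $k^{-1}$ and formal power series in $h$, and ensuring that the compositions of asymptotic expansions behave well uniformly in $\sigma$; all of this is standard once the compatibility $h \leftrightarrow 1/(2k+n)$ is fixed.
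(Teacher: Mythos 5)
Your proposal is correct and follows essentially the same route as the proof this paper relies on: the paper does not reprove the proposition but cites \cite{andersen2012hitchin}, where the argument is precisely that the induced connection $\hat{\nabla}^e$ satisfies the Leibniz rule on compositions of Toeplitz operators, Theorem \ref{thm:formal_h_connection} and Theorem \ref{thm:BT_star_product} convert both sides into asymptotic expansions of Toeplitz operators, and uniqueness of such expansions (via the norm asymptotics of Toeplitz operators) yields the derivation identity order by order, after accounting for the reparametrization between $1/k$ and $1/(2k+n)$.
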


\begin{prop}\label{prop:hitchin_proj_flat_formal_flat}
If the Hitchin connection $\hat{\nabla}$ in $\hat{\calQ}_k$ is projectively flat, then the formal Hitchin connection 
$D_V = V + \tilde{A}(V)$
associated to it is flat.
\end{prop}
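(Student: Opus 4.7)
The plan is to exploit the defining relation
$$\hat{\nabla}_V^e T_f^{(k)} \sim T_{(D_V f)(1/(2k+n))}^{(k)}$$
from Theorem~\ref{thm:formal_h_connection} and propagate the flatness from the left-hand side to the right-hand side via asymptotic faithfulness of the Toeplitz quantization map.

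First I would observe that projective flatness of the Hitchin connection $\hat{\nabla}$ on $\hat{\calQ}_k$ means that its curvature is a scalar $2$-form $c_k \in \Omega^2(\T)$ times the identity endomorphism of $\hat{\calQ}_k$. The induced connection $\hat{\nabla}^e$ on $\End(\hat{\calQ}_k)$ acts by commutators with the connection $1$-form, so the scalar part is annihilated and the curvature of $\hat{\nabla}^e$ is identically zero. In particular, for every smooth section $f$ of $C_h$ and every pair of vector fields $V,W$ on $\T$,
$$\bigl(\hat{\nabla}_V^e \hat{\nabla}_W^e - \hat{\nabla}_W^e \hat{\nabla}_V^e - \hat{\nabla}_{[V,W]}^e\bigr) T_f^{(k)} = 0.$$

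Next I would compute the curvature of $D$. Writing $D_V = V + \tilde{A}(V)$, a direct calculation gives
$$R^D(V,W)(f) = \bigl(V[\tilde{A}(W)] - W[\tilde{A}(V)] - \tilde{A}([V,W]) + [\tilde{A}(V),\tilde{A}(W)]\bigr)(f),$$
which is a formal power series of differential operators applied to $f$. Iterating the defining asymptotic relation for $D$ twice, and using that the Toeplitz map is linear in the symbol, one obtains
$$\bigl(\hat{\nabla}_V^e \hat{\nabla}_W^e - \hat{\nabla}_W^e \hat{\nabla}_V^e - \hat{\nabla}_{[V,W]}^e\bigr) T_f^{(k)} \sim T_{(R^D(V,W)f)(1/(2k+n))}^{(k)},$$
where the error on the right decays faster than any power of $1/k$, uniformly on compact subsets of $\T$, for every $f\in C^\infty(\T,C^\infty(M))$. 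Combining this with the vanishing of the left-hand side yields
$$T_{(R^D(V,W)f)(1/(2k+n))}^{(k)} = O(k^{-\infty}).$$

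The main obstacle is to pass from this asymptotic vanishing of the Toeplitz operators to the vanishing of the formal series $R^D(V,W)(f)$ itself. I would invoke the asymptotic faithfulness of Berezin-Toeplitz quantization: the norm estimate $\|T_g^{(k)}\| = \|g\|_\infty + O(1/k)$ of Bordemann-Meinrenken-Schlichenmaier implies that if a formal function $g = \sum_l g_l h^l$ satisfies $T_{g(1/(2k+n))}^{(k)} = O(k^{-\infty})$, then $g_l = 0$ for all $l$, by extracting the coefficients inductively from the asymptotic expansion in $1/k$. Applying this to $g = R^D(V,W)(f)$ for every smooth $f$, $V$, $W$, one concludes that $R^D$ vanishes identically, i.e.\ $D$ is flat. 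The delicate point is controlling the asymptotic expansion uniformly in the parameter $\sigma\in \T$ so that the asymptotic expansion of Toeplitz operators may be compared coefficient by coefficient with the formal expansion of $D$; this is carried out in \cite{andersen2012hitchin} and can be invoked here.
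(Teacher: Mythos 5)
Your argument is correct and is essentially the one the paper relies on: the paper does not reprove this statement but cites \cite{andersen2012hitchin}, where flatness is obtained exactly as you describe -- projective flatness of $\hat{\nabla}$ makes the induced connection $\hat{\nabla}^e$ on $\End(\hat{\calQ}_k)$ flat, the defining asymptotic relation is iterated to express the curvature of $D$ through Toeplitz operators, and the Bordemann--Meinrenken--Schlichenmaier norm estimates force the formal curvature to vanish order by order. The uniformity in $\sigma$ needed to differentiate the asymptotic expansions, which you correctly flag as the delicate point, is indeed supplied by the cited work.
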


\begin{remark}
Proposition \ref{prop:FHC_is_derivation} relies on the theory of geometric quantization and Toeplitz operators, and their link to deformation quantization. Consequently its validity can be traced back to the existence of a Hitchin connection in geometric quantization, which puts many requirements on the objects involved -- in particular,
one needs the compatible K\"ahler structures to be rigid and holomorphic.
That is, if we adopt the assumptions of Theorem \ref{thm:existence_h_connection} and if the Hitchin connection $\hat{\nabla}$ is projectively flat, the previous two propositions imply that formal Hitchin connection from \cite{andersen2012hitchin} is a formal Hitchin connection according to Definition \ref{defn:formal_hitchin_connection}.
\end{remark}

\subsection{Low orders of the formal Hitchin connection}

The explicit expression of the formal Hitchin connection \eqref{eq:formal_h_connection_explicit} makes sense in a more general setting. Therefore the question arises to which extent this expression actually defines a formal connection 
compatible with the Berezin-Toeplitz star products.
In other words, we wonder whether it defines a derivation of the Berezin-Toeplitz
star products for a general family of K\"ahler structures.

Here we shall answer this question up to order one by a direct computation, for which we need not assume that a Hitchin connection in the framework of geometric quantization exists.
We include two preliminary lemmata concerning the coefficients of the Berezin-Toeplitz star product.

Recall that we use the special notation $c^{(k)}$ for the coefficients of $\star_\sigma$, when we consider a Berezin-Toeplitz star product.

\begin{lemma}\label{lemma:formula_for_V[c1]_proposition}
Let $M$ be a symplectic manifold with a family of compatible K\"ahler structures parametrized on $M$, by a manifold $\T$, and let $c^{(k)}$ denote the coefficients of the Berezin-Toeplitz star product associated to the complex structure for a certain $\sigma \in \T$. Then we have:
\begin{equation}\label{eq:formula_for_V[c1]_proposition}
V[c^{(1)}](f,g) = \frac{1}{4}\left( \Delta_{\tilde{G}(V)}(fg) -\Delta_{\tilde{G}(V)}(f)g -\Delta_{\tilde{G}(V)}(g)f \right).
\end{equation}
\end{lemma}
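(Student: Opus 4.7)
The plan is to compute $V[c^{(1)}](f,g)$ by writing $c^{(1)}$ invariantly in terms of the tensors $\tilde g$ and $\tilde\omega$, differentiating along $V$, and then matching with the right-hand side via a ``defect of Leibniz'' identity for $\Delta_Z$. The first step is to recall the explicit form of the first coefficient of the Berezin-Toeplitz star product (Karabegov--Schlichenmaier): in local K\"ahler coordinates,
\[
c^{(1)}(f,g) = -g^{i\bar j}\,\partial_i f\,\partial_{\bar j} g,
\]
which is consistent both with the Wick-type property (so only $\partial f$ and $\bar\partial g$ appear) and with the commutator relation $c^{(1)}(f,g) - c^{(1)}(g,f) = i\{f,g\}$. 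Splitting the complex bivector $B = -g^{i\bar j}\partial_i\otimes \partial_{\bar j}$ into its symmetric and antisymmetric parts gives the invariant expression
\[
c^{(1)}(f,g) = -\tfrac{1}{2}\tilde g(df,dg) + \tfrac{i}{2}\tilde\omega(df,dg),
\]
which is the key reformulation making the $V$-variation tractable.

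Next I would differentiate this formula along $V$. Since $\omega$ does not depend on $\sigma$, we have $V[\tilde\omega]=0$; and from Section~\ref{section: Hitchin connection} we know that $V[\tilde g]=-\tilde G(V)$. Because $f$ and $g$ are $\sigma$-independent, $df$ and $dg$ are likewise constant along $V$, so
\[
V[c^{(1)}](f,g) = \tfrac{1}{2}\tilde G(V)(df,dg).
\]
To match this with the right-hand side of the claim I would invoke the following identity, valid for any symmetric bivector field $Z$ on $M$: since $\Delta_Z = \nabla_Z^2 + \nabla_{\delta(Z)}$ is a second-order differential operator whose principal symbol is $Z$ and whose subprincipal part is a first-order derivation, a short coordinate calculation gives
\[
\Delta_Z(fg) - \Delta_Z(f)g - \Delta_Z(g)f = 2\,Z(df,dg).
\]
Specializing to $Z=\tilde G(V)$ and multiplying by $\tfrac{1}{4}$ reproduces $\tfrac{1}{2}\tilde G(V)(df,dg)$, which coincides with the expression for $V[c^{(1)}](f,g)$ obtained above.

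The main obstacle is sign bookkeeping: the conventions for the leading BT coefficient, for the Poisson bracket, and for $\tilde G(V)$ through the defining relation $V[I]=(\mathrm{Id}\otimes i_\omega)\tilde G(V)$ each carry an implicit sign, and these must all be made consistently for the factors of $\tfrac{1}{2}$ and $i$ to work out as stated. Once the signs are nailed down, the proof is little more than the three-step identification sketched above.
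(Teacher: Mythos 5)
Your argument is essentially the paper's own proof: both start from the Karabegov--Schlichenmaier formula for $c^{(1)}$, differentiate along $V$ using $V[\tilde g]=-\tilde G(V)$ (with $\tilde\omega$ fixed) to get $V[c^{(1)}](f,g)=\tfrac12\,\tilde G(V)(df,dg)$, and then match this with the right-hand side via the Leibniz-defect identity $\Delta_Z(fg)-\Delta_Z(f)g-\Delta_Z(g)f=2Z(df,dg)$, which the paper verifies by writing $\tilde G(V)=\sum_j X_j\otimes Y_j$. Your explicit split of $c^{(1)}$ into the symmetric part $\tfrac12\tilde g(df,dg)$ (up to the sign convention you flag) and the $\sigma$-independent antisymmetric part $\tfrac{i}{2}\tilde\omega(df,dg)$ is only a tidier packaging of the same differentiation step, so the proof is correct and follows the paper's route.
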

\begin{proof}
By a result of Karabegov \cite{karabegov1996deformation} we know that the degree $1$ coefficient of the Berezin-Toeplitz star product can be written as:\index{star product!Berezin-Toeplitz}
\begin{equation}\label{eq:formulae_for_c1_chap7}
c^{(1)}(f,g) =  g({\partial}f,\bar{\partial} g) = i\nabla_{X''_{g}} (f),
\end{equation}
for any functions $f,g \in C^\infty(M)$, where $X_{f}$ is the Hamiltonian vector field associated to $f$.

By differentiating equation \eqref{eq:formulae_for_c1_chap7}, we get the following relation:
\begin{equation}\label{eq:formulae_for_V[c1]}
V[c^{(1)}](f,g) = \frac{1}{2}df \tilde{G}(V) dg = \frac{1}{2} \nabla_{\tilde{G}(V)dg} (f).
\end{equation}
The operator $\Delta_{\tilde{G}(V)}$ is written as $\Delta_{\tilde{G}(V)} = \nabla_{\tilde{G}(V)}^2 + \nabla_{\delta {\tilde{G}(V)}}$, thus we get that
\[
 \Delta_{\tilde{G}(V)}(fg) -\Delta_{\tilde{G}(V)}(f)g -\Delta_{\tilde{G}(V)}(g)f  = \nabla^2_{\tilde{G}(V)}(fg) -\nabla^2_{\tilde{G}(V)}(f)g -\nabla^2_{\tilde{G}(V)}(g)f,
\]
since the sum of the order one terms vanishes. We can express the symmetric bivector field $\tilde{G}(V)$ as $\sum_j (X_j \otimes Y_j)$ for vector fields $X_j$ and $Y_j$, and rewrite the right hand side of \eqref{eq:formula_for_V[c1]_proposition} as:
\begin{align*}
&\frac{1}{4}\left( \sum_j \nabla_{X_j}\nabla_{Y_j}(fg) -g\sum_j \nabla_{X_j}(\nabla_{Y_j}f) -f \sum_j \nabla_{X_j}(\nabla_{Y_j}g) \right) \\
&= \frac{1}{4} \sum_j \left( \nabla_{Y_j}(f)\nabla_{X_j}(g) + \nabla_{X_j}(f)\nabla_{Y_j}(g) \right)
=  \frac{1}{2} \sum_j \left( \nabla_{X_j}(f)\nabla_{Y_j}(g) \right) \\
&= \frac{1}{2}df \tilde{G}(V) dg,
\end{align*}
where we use the symmetry of the bivector field. This concludes the proof.
\end{proof}

\begin{remark}
Let us note that the expression we obtained for $V[c^{(1)}]$ also shows that it is symmetric in the two variables.
\end{remark}

We remark that because the Berezin-Toeplitz star product is natural, the first oder coefficient
$c^{(1)}$ is a differential operator of order $1$ and hence is a derivation with respect to both arguments.

We now show that the expression \eqref{eq:formal_h_connection_explicit} gives a derivation of the Berezin-Toeplitz star product up to order $1$ in $h$. The derivation relation \eqref{eq:derivation_relation} can be written as:
\begin{equation}\label{eq:derivation_relation_revisited}
f\ V[\star^{BT}]\ g = \tilde{A}(V)(f)\star^{BT} g + f\star^{BT} \tilde{A}(V)(g) - \tilde{A}(V)(f \star^{BT} g),
\end{equation}
where $V[\star^{BT}]$ denotes the product with coefficients $V[c^{(k)}]$.  Modulo $h^2$ and using $\tilde{A}^0 = 0$, we arrive at the following condition that has to hold for all vector fields $V$ on $\T$ and smooth functions $f$ and $g$ on $M$:
\begin{equation}\label{eq:derivation_relation_at_order_one}
V[c^{(1)}](f,g) = -\tilde{A}^1(V)(fg) + \tilde{A}^1(V)(f)g + f\tilde{A}^1(V)(g).
\end{equation}
To check this, we extract the expression for $\tilde{A}^1$ from \eqref{eq:formal_h_connection_explicit}, which gives us the following:
\[
\tilde{A}^1(V)(f) = -\frac{1}{4}\Delta_{\tilde{G}(V)}(f) + c^{(1)}(V[F],f) +V[c^{(1)}](F,f).
\]
We can now substitute this into the right-hand side of equation \eqref{eq:derivation_relation_at_order_one} and get:
\[
 \frac{1}{4}\left( \Delta_{\tilde{G}(V)}(fg) - g\Delta_{\tilde{G}(V)}(f) - f\Delta_{\tilde{G}(V)}(g)\right).
\]
Observe that the second and third terms in $\tilde{A}^1(V)$ do not appear, since they are differential operators
of order $1$ and hence automatically satisfy Leibniz rule. We finally observe that what we obtained is precisely the expression from Lemma \ref{lemma:formula_for_V[c1]_proposition}, and so we have checked that the formal connection corresponding to $\tilde{A}^1$ is compatible
with $\star^{BT}$ modulo terms of order $h^2$.

Moreover we can check that the expression \eqref{eq:formal_h_connection_explicit} defines a formal connection that is flat up to order one in $h$. This amounts to showing that its curvature vanishes modulo $h^2$. Note that the only terms we see when we compute the curvature modulo $h^2$ are those coming from $d_\T\tilde{A}^1$, since the commutator terms in the curvature are of order at least $h^2$. Therefore $\tilde{A}^1$ defines a flat connection up to order one if an only if it is closed with respect to $d_\T$. The closeness is most easily established by defining the $0$-form $P_1 = \frac{1}{4}\Delta -c^{(1)}(F,f)$ and checking that $V[-P_1] = \tilde{A}^1(V)$ for any vector field on $\T$.

The discussion above can be summed up in the following proposition.

\begin{prop}\label{prop: order 1 Hitchin}
Let $M$ be a symplectic manifold with a family of compatible K\"ahler structures parametrized by a complex manifold $\T$. Then expression \eqref{eq:formal_h_connection_explicit} defines a formal connection that, up to order one in the formal parameter, is a derivation of the family of Berezin-Toeplitz star products on $M$ and flat . Therefore it defines a formal Hitchin connection in the sense of Definition \ref{defn:formal_hitchin_connection} modulo terms of order $h^2$.
\end{prop}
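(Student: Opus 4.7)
The plan is to verify the two required properties --- compatibility with the family of Berezin-Toeplitz star products modulo $h^2$ and flatness modulo $h^2$ --- by direct computation, with Lemma \ref{lemma:formula_for_V[c1]_proposition} providing the essential input. First I would expand $D_V = V + \tilde{A}(V)$ in powers of $h$ from \eqref{eq:formal_h_connection_explicit}. The order-$h^0$ contributions reduce to $V[f] + V[F]f - V[F]f = V[f]$, so $\tilde{A}^0 = 0$. Collecting the order-$h$ contributions and invoking the identity $V[c^{(1)}](F,f) = \tfrac{1}{2}\nabla_{\tilde{G}(V)dF}(f)$ from \eqref{eq:formulae_for_V[c1]} to reorganize the $\tfrac{1}{2}h\nabla_{\tilde{G}(V)dF}(f)$ term yields
\[
\tilde{A}^1(V)(f) = -\tfrac{1}{4}\Delta_{\tilde{G}(V)}(f) + c^{(1)}(V[F],f) + V[c^{(1)}](F,f).
\]

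For the compatibility, I would reduce the derivation relation \eqref{eq:formal_connection_is_derivation} order by order modulo $h^2$. The order-$h^0$ equation is tautological since $\tilde{A}^0 = 0$ and $c^{(0)}$ is the pointwise product; at order $h$ one obtains the condition
\[
V[c^{(1)}](f,g) = \tilde{A}^1(V)(f)g + f\tilde{A}^1(V)(g) - \tilde{A}^1(V)(fg).
\]
Since the Berezin-Toeplitz star product is natural, $c^{(1)}$ is a first-order bidifferential operator, so the two terms $c^{(1)}(V[F],\blank)$ and $V[c^{(1)}](F,\blank)$ appearing in $\tilde{A}^1(V)$ are first-order differential operators and hence satisfy Leibniz, contributing nothing to the right-hand side. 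What remains is the contribution of $-\tfrac{1}{4}\Delta_{\tilde{G}(V)}$, which produces precisely the right-hand side of \eqref{eq:formula_for_V[c1]_proposition}, and Lemma \ref{lemma:formula_for_V[c1]_proposition} then closes the verification.

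For the flatness, the curvature of $D$ is $d_\T \tilde{A} + \tfrac{1}{2}[\tilde{A},\tilde{A}]$; since $\tilde{A}$ starts at order $h$, the bracket term lives in $h^2\cdot\D(M)[[h]]$ and can be dropped modulo $h^2$. Thus flatness modulo $h^2$ reduces to showing that the operator-valued $1$-form $\tilde{A}^1$ on $\T$ is closed. I would establish this by exhibiting an operator-valued $0$-form $P_1$ on $\T$ with $-V[P_1] = \tilde{A}^1(V)$ for every vector field $V$: setting
\[
P_1 := \tfrac{1}{4}\Delta - c^{(1)}(F,\blank),
\]
and differentiating along $V$, the variational identity $V[\Delta] = \Delta_{\tilde{G}(V)}$ together with the Leibniz rule applied to the $\sigma$-dependence of $c^{(1)}$ and of $F$ gives the desired equation. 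Hence $\tilde{A}^1 = -d_\T P_1$ is exact and in particular closed.

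The main point of the argument is the Leibniz computation at order $h$, but this is trivialized once Lemma \ref{lemma:formula_for_V[c1]_proposition} is available. The flatness step is then essentially formal, the only delicate point being the variational formula for the K\"ahler Laplacian that justifies identifying $P_1$ as a primitive of $\tilde{A}^1$.
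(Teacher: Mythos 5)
Your proposal is correct and follows essentially the same route as the paper: the same extraction of $\tilde{A}^1$, the same order-by-order reduction of the derivation property with the two first-order terms discarded by Leibniz and the remaining Laplacian term matched against Lemma \ref{lemma:formula_for_V[c1]_proposition}, and the same primitive $P_1 = \tfrac{1}{4}\Delta - c^{(1)}(F,\blank)$ for flatness modulo $h^2$. The only difference is that you spell out the variational identity $V[\Delta] = \Delta_{\tilde{G}(V)}$ which the paper leaves implicit in its claim $V[-P_1] = \tilde{A}^1(V)$.
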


\subsection{Derivations of star products}

We shall now put aside the formal Hitchin connection and look at formal connections in general. We aim at describing the space of formal connections on a symplectic manifold.

We begin by studying the space of derivations of a star product. Recall that a map $B \colon C_h^\infty(M) \to C_h^\infty(M)$ is a derivation with respect to a fixed star products if it satisfies the relation:\index{star product!derivation}
\begin{equation}\label{eq:derivation_relation_defined}
B(f \star g) = B(f) \star g + f \star B(g),
\end{equation}
for any $f, g$ smooth (formal) functions on $M$.

Gutt and Rawnsley showed the following proposition in \cite{gutt1999equivalence}:
\begin{prop}\label{prop:GuttRawnsley}
On a symplectic manifold $(M,\omega)$ with a star product $\star$, any derivation $B$ of $\star$ can be written as  $B = \sum_{k\in\N} B_k h^k$ where each $B_k$ corresponds to a symplectic vector field $Y_k$ on $M$. The correspondence can be made explicit on any contractible open subset $U \subset M$ by the following relation:
\[
B_k(f)|_U = \frac{1}{h} (b_k \star f - f \star b_k),
\]
where $b_k \in C^\infty(U)$ is such that $Y_k(f)|_U = \{b_k,f\}|_U$. 
\end{prop}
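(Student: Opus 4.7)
\emph{Plan.} My strategy is to analyze the derivation identity $B(f \star g) = B(f)\star g + f\star B(g)$ order by order in the formal parameter $h$, and to identify each homogeneous piece of $B$.

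First, I would expand $B = \sum_{k\geq 0} h^k B_k$, with each $B_k$ a $\C$-linear operator on $C^\infty(M)$, and substitute into the derivation identity. The $h^0$-coefficient yields $B_0(fg) = B_0(f)\,g + f\,B_0(g)$, so that $B_0$ is a derivation of the pointwise product, hence a vector field $Y_0$ on $M$. Next, to show $Y_0$ is symplectic, I would take the antisymmetric part (in $f,g$) of the $h^1$-coefficient of the derivation identity. The contribution from $B_1 \circ c^0$ is symmetric in $f,g$ and cancels, and using $c^1(f,g) - c^1(g,f) = i\{f,g\}$ what remains is
\[
Y_0(\{f,g\}) = \{Y_0 f,\, g\} + \{f,\, Y_0 g\},
\]
which asserts that $Y_0$ preserves the Poisson bracket.

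The third step is an iterative subtraction of inner derivations. On a contractible open $U \subset M$ the symplectic vector field $Y_0$ is Hamiltonian, i.e. $Y_0 f = \{b_0,f\}$ for some $b_0 \in C^\infty(U)$, and the operator $\frac{1}{ih}(b_0 \star f - f \star b_0)$ is a well-defined inner derivation of $\star|_U$ whose leading term coincides with $Y_0$. Subtracting this inner derivation from $B|_U$ yields a derivation of $\star|_U$ whose $h^0$-component vanishes; dividing by $h$ (which is legitimate because $\star$ is $\C[[h]]$-bilinear) produces a new derivation $B'$, to which the preceding analysis applies to extract the next symplectic vector field $Y_1$ with a local Hamiltonian $b_1$. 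Iterating yields the full sequence $\{(Y_k, b_k)\}_{k \in \N}$, and reassembling the local inner derivations gives the formula displayed in the statement.

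The last step is globalization. The vector fields $Y_k$ are intrinsic to $B$ because at each stage they appear as the principal symbol of an operator constructed from $B$ alone, independently of local Hamiltonian choices. Moreover, on overlaps of contractible opens, two Hamiltonians for the same $Y_k$ differ by a locally constant function, which is central for $\star$; consequently, $\frac{1}{h}\mathrm{ad}_\star(b_k)$ is insensitive to the choice of local primitive, and the local formula $B_k(f)|_U = \frac{1}{h}(b_k \star f - f \star b_k)$ is globally well-posed.

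\emph{Main obstacle.} The key difficulty is justifying the iterative step: after subtracting the local inner derivation at each order, one must simultaneously ensure that the remainder is divisible by $h$, that the quotient is again a derivation of $\star$, and that the next symplectic vector field extracted is globally defined despite being identified through local Hamiltonian choices. This bookkeeping relies on the $\C[[h]]$-module structure of the space of derivations together with the centrality of constants in any star product.
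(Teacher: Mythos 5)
The paper does not actually prove this proposition: it is quoted from Gutt and Rawnsley \cite{gutt1999equivalence}, so there is no internal proof to compare against. Your sketch is essentially the standard argument by which the result is established, and it is sound: the $h^0$-term of a derivation is a derivation of the pointwise product, hence a vector field; antisymmetrizing the $h^1$-coefficient of the derivation identity and using $c^1(f,g)-c^1(g,f)=i\{f,g\}$ shows this vector field preserves the Poisson bracket, hence is symplectic; on a contractible $U$ one subtracts the locally inner derivation generated by a local Hamiltonian, divides by $h$ (legitimate since $C^\infty_h(M)$ is $h$-torsion-free, so the quotient is again a derivation of $\star|_U$), and iterates; and the higher vector fields glue globally because two local Hamiltonians of the same symplectic field differ by a locally constant function, which is central, so each stage of the construction is independent of the choices made. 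A few points to make explicit if you write this up: (i) the whole argument uses that $\star$ is differential, both to restrict it to $U$ and to know that locally constant functions are central (via $c^k(1,\cdot)=0$ for $k\geq 1$ and locality of the $c^k$); this is the paper's standing assumption, but it should be invoked. (ii) With the paper's convention $c^1(f,g)-c^1(g,f)=i\{f,g\}$, the correctly normalized inner derivation is $\frac{1}{ih}\ad_\star(b_k)$, as you use; the factor $\frac{1}{h}$ in the proposition's displayed formula corresponds to Gutt--Rawnsley's normalization of the skew part of $c^1$, so fix one convention and keep it consistent. (iii) In your globalization step, ``principal symbol'' should be ``coefficient of $h^0$'': the invariant you extract at each stage is the classical limit of the remainder operator, not its symbol as a differential operator; with that wording your gluing argument is exactly right.
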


We say that a formal symplectic vector field is a formal series $Y = \sum_{k\in\N} Y_k h^k$, where $Y_k$ is a symplectic vector field for all $k$. Then the proposition above is telling us that derivations correspond to formal symplectic vector fields and that locally a derivation can be written as $\frac{1}{h} \ad_\star b$ for a formal smooth function $b$.

In general we have an isomorphism:
\[
\Gamma_\mathrm{sym}(M)/\Gamma_\mathrm{Ham}(M) \cong H^1(M;\R),
\]
where $\Gamma_\mathrm{sym}(M)$ and $\Gamma_\mathrm{Ham}(M)$ denote the space of symplectic and Hamiltonian vector fields on $M$, respectively. 
Therefore, if $H^1(M;\R) = 0$, all symplectic vector fields are Hamiltonian, and hence, every derivation corresponds to a formal Hamiltonian vector field $X_b = \sum_{k\in\N} X_{b_k} h^k$, for a formal smooth function $b$ in that case. More explicitly, any derivation can be written globally in the form $\frac{1}{h}\ad_\star b$ for a formal function $b$. 
Observe that ther kernel of $b \mapsto \frac{1}{h}\ad_\star b$ are exactly the constant formal functions.

\subsection{The affine space of formal connections}

Let $D$ and $D'$ be two formal connections on $M$ for the same family of star products parametrized by $\T$.
It is immediate to see that
\[
D'_V - D_V = A'(V) - A(V) = (A'-A)(V).
\]
Hence their difference is a $1$-form on $\T$ with values in the derivations of the star products of the family, and it is zero modulo $h$.
In the following $\Der(M,\star)$ denotes the space of derivations of the star product $\star$ on $M$ and
$\Der_0(M,\star)$ denotes the subset of derivations that are trivial modulo $h$.

Given a symplectic manifold $M$ equipped with a family of star products $\{\star_\sigma\}_{\sigma\in\T}$ parametrized by $\T$, we denote by $\F(M,\star_\sigma)$ the space of the formal connections that are compatible with the family.
We see that $\F(M,\star_\sigma)$ is an affine space over the space of $1$-forms on $\T$ with values in $\Der_0(M,\star)$, and thus can be written as:
\begin{equation}\label{eq:affine_space_formal_connections_1}
\F(M,\star_\sigma) = D_0 + \Omega^1(\T,\Der_0(M,\star_\sigma)),
\end{equation}
for a fixed formal connection $D_0$, which is compatible with $\{\star_\sigma\}_{\sigma \in \T}$.

As remarked above, the derivations of $\star$ correspond to formal symplectc vector fields on $M$, therefore we can rewrite \eqref{eq:affine_space_formal_connections_1} in the following way:
\[
\F(M,\star_\sigma) = D_0 + \Omega^1(\T,h\Gamma_\mathrm{sym}(M)[[h]]).
\]

If we assume that $H^1(M;\R)$ vanishes, all derivations of $\star$ are essentially inner, and therefore they are parametrized by an element in $\tilde{C}_h^\infty(M)$, the space of formal functions on $M$ modulo the constants. Therefore the compatible formal connections form an affine space modelled on the $1$-forms on $\T$ with values in $h\tilde{C}_h^\infty(M)$.
\[
\F(M,\star_\sigma) \cong D_0 + \Omega^1(\T,h\tilde{C}_h^\infty(M)), 
\]
for a fixed compatible formal connection $D_0$.

\subsection{Gauge transformations of formal connections}

We shall study gauge transformations on the space of formal connections $\F(M,\star_\sigma)$. The transformations we consider are self-equivalences of the family of star products, since the connections should still act as derivations after we transform them. This means that we look at $P \in C^\infty(\T,\D_h(M))$ with $P=\mathrm{id} \textrm{ mod } h$ such that
\begin{equation}\label{eq:self_equivalence_star_product}
P_\sigma(f \star_\sigma g) = P_\sigma(f) \star_\sigma P_\sigma(g),
\end{equation}
for any $\sigma \in \T$ and any smooth function $f$ and $g$.

We are now ready to prove the following theorem.

\begin{thm}\label{thm:gauge_transformation_of_flat_formal_connections}
Let $M$ be a symplectic manifold with a family of star products $\{\star_\sigma\}_{\sigma\in\T}$ parametrized by a smooth manifold $\T$ with $H^1(\T,\mathbb{R})=0$. Let $D,D' \in \F(M,\star_\sigma)$ be formal connections for the family and let us assume that they are flat. Then they are gauge equivalent via a self-equivalence of the family of star products $P \in C^\infty(\T,\D_h(M))$, meaning that 
\begin{equation}\label{eq:gauge_equivalence_formal_connections}
D_V' = P^{-1}D_VP,
\end{equation}
for any vector field $V$ on $\T$.
\end{thm}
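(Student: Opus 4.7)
My plan is to construct $P$ as a parallel section of an auxiliary flat connection on $\T \times \D_h(M)$, and then show that its self-equivalence property is automatic.

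\emph{Step 1 (Auxiliary connection).} The gauge equivalence $D'_V = P^{-1} D_V P$ is equivalent to the operator identity
$$
V[P] = P \circ A'(V) - A(V) \circ P
$$
for smooth maps $P\colon \T \to \D_h(M)$ with $P = \mathrm{id} \mod h$. Equivalently, $P$ is parallel with respect to the auxiliary linear connection
$$
\bar{D}_V(P) := V[P] + A(V) \circ P - P \circ A'(V)
$$
on the trivial bundle $\T \times \D_h(M) \to \T$. Using the flatness of $D$ and $D'$ (each equivalent to a Maurer-Cartan equation of the form $d_\T A + \frac{1}{2}[A \wedge A] = 0$ for the respective connection 1-form), a direct computation yields $[\bar{D}_V, \bar{D}_W] = \bar{D}_{[V, W]}$, so $\bar{D}$ is flat.

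\emph{Step 2 (Order-by-order existence of $P$).} I construct $P = \mathrm{id} + h P_1 + h^2 P_2 + \cdots$ inductively. At order $h^n$ the equation $\bar{D}_V P = 0$ becomes $V[P_n] = \gamma_n(V)$ for a 1-form $\gamma_n \in \Omega^1(\T, \D(M))$ built from the previously constructed $P_1, \ldots, P_{n-1}$ and the known forms $A, A'$. Expanding the flatness of $\bar{D}$ at order $h^n$ shows that $\gamma_n$ is closed on $\T$. Since $H^1(\T, \mathbb{R}) = 0$, every closed $\D(M)$-valued 1-form on $\T$ is exact, so $P_n$ exists; I fix it by the initial condition $P_n(\sigma_0) = 0$ at a chosen basepoint $\sigma_0 \in \T$. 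The resulting $P$ is invertible because $P = \mathrm{id} \mod h$.

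\emph{Step 3 (Self-equivalence is automatic).} Define the defect $E(f, g) := P(f \star_\sigma g) - P(f) \star_\sigma P(g)$. The compatibility of both $D$ and $D'$ with the family of star products gives the identity
$$
V[\star_\sigma](f, g) = A(V)(f) \star_\sigma g + f \star_\sigma A(V)(g) - A(V)(f \star_\sigma g),
$$
and the same formula with $A'$ in place of $A$. Combining this with the gauge identity $V[P] = P A'(V) - A(V) P$, a direct calculation yields
$$
V[E](f, g) = -A(V)\bigl(E(f, g)\bigr) + E(A'(V)(f), g) + E(f, A'(V)(g)).
$$
Since $E$ vanishes at $\sigma_0$ (where $P = \mathrm{id}$) and $\T$ is connected, this linear ODE forces $E \equiv 0$ on $\T$, so each $P_\sigma$ is a self-equivalence of $\star_\sigma$.

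\emph{Main obstacle.} I expect the core technical work to be the bookkeeping in Steps 1 and 3, namely verifying the flatness of $\bar{D}$ and deriving the homogeneous linear equation for $E$ by carefully combining the two Maurer-Cartan equations with the derivation properties of $D$ and $D'$; and in Step 2 the use of $H^1(\T, \mathbb{R}) = 0$ to pass from closedness to exactness of a $\D(M)$-valued 1-form at every order in $h$, which is the only place where the cohomological hypothesis enters. The pleasant feature of this approach is that the self-equivalence property comes for free from the derivation compatibility, avoiding any separate Hochschild-cohomology argument and any use of Proposition \ref{prop:GuttRawnsley} beyond the compatibility it enforces on $D - D'$.
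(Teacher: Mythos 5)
Your proposal is correct, and its core is the same induction as the paper's proof: writing the gauge condition as $V[P]=PA'(V)-A(V)P$, solving it order by order in $h$, using flatness of $D$ and $D'$ to show the order-$n$ obstruction $1$-form on $\T$ is closed, and using $H^1(\T,\R)=0$ to find a primitive. Your packaging via the auxiliary connection $\bar D_V(P)=V[P]+A(V)P-PA'(V)$ on $\T\times\D_h(M)$ is a clean reformulation (its flatness is exactly the two Maurer--Cartan equations, acting by left and right composition, so the curvature computation you defer does go through), but it is not a different mechanism. What is genuinely different, and valuable, is your Step 3: the paper's written proof only produces $P$ solving \eqref{eq:gauge_equivalence_formal_connections_3} and never verifies that $P_\sigma$ intertwines the star products, and indeed this is \emph{not} automatic from the gauge equation alone (e.g.\ for a constant family with $A=A'=0$ any constant $P$ solves it); your normalization $P(\sigma_0)=\mathrm{id}$ together with the homogeneous identity $V[E](f,g)=-A(V)(E(f,g))+E(A'(V)f,g)+E(f,A'(V)g)$ for the defect $E$ (which checks out) does force $E\equiv 0$, arguing order by order in $h$ since $A,A'=O(h)$. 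Two small points to tighten: $H^1(\T,\R)=0$ does not give connectedness, so choose a basepoint and normalization in each component; and the passage from closedness to exactness of a $\D(M)$-valued $1$-form, with smooth dependence on $\sigma$, needs the same justification the paper relegates to a footnote (an anti-differential constructed e.g.\ via the \v{C}ech--de Rham complex), rather than being quoted as immediate.
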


\begin{proof}
As usual we can write the formal connections in the form:
\begin{align*}
D_V &= V + A(V) \\
D_V &= V + A'(V),
\end{align*}
for two $1$-forms $A, A' \in \Omega(\T,\D_h(M))$ with values in formal differential operators on $M$, and any vector field $V$ on $\T$. Then we can rewrite \eqref{eq:gauge_equivalence_formal_connections} by plugging in a section $f$ of the bundle as:
\begin{equation}\label{eq:gauge_equivalence_formal_connections_2}
\begin{split}
V[f] + A'(V)(f) &= P^{-1}(V+A(V))P(f) \\
&= P^{-1}\left( V[P](f) + P(V[f]) + A(V)(P(f)) \right).
\end{split}
\end{equation}
Therefore if we apply $P$ on both sides we get the following equation: 
\begin{equation}\label{eq:gauge_equivalence_formal_connections_3}
V[P] = PA'(V) - A(V)P.
\end{equation}
If we can find a $P = \sum_{k\in\N} P_k h^k$ that solves the equation, then we get the wanted gauge transformation. To do so we proceed inductively. By definition we have $P_0=\mathrm{id}$ and hence $V[P_0] = 0$.
Let us assume that we have determined $P^{(l)} = \sum_{k\leq l} P_k h^k$ such that
\[
V[P^{(l)}] = P^{(l)}A'(V) - A(V)P^{(l)} + O(h^{l+1}).
\]
This can be written as:
\begin{equation}\label{eq:gauge_equivalence_formal_connections_3.1}
B_{l+1}(V)h^{l+1} = V[P^{(l)}] - (P^{(l)}A'(V) - A(V)P^{(l)}) + O(h^{l+2}),
\end{equation}
where $B_{l+1}$ is a $1$-form on $\T$ with values in differential operators on $M$.
Let us define a $1$-form $\alpha_{l}$ on $\T$ with values in formal differential operators on $M$ by $\alpha_{l}(V) = (P^{(l)}A'(V) - A(V)P^{(l)})$. We want to show that $\alpha_l$ is closed modulo $h^{l+2}$. Let  $V$ and $W$ be two commuting vector fields on $\T$. Then we have that:
\begin{equation}\label{eq:gauge_equivalence_formal_connections_4}
\begin{split}
& d_\T\alpha_l(V,W)  \\
&= V[\alpha(W)] - W[\alpha(V)] \\
&= V[P^{(l)}A'(W) - A(W)P^{(l)}] -W[P^{(l)}A'(V) - A(V)P^{(l)}] \\
&= P^{(l)}\left( A'(V)A'(W) - A'(W)A'(V) + V[A'(W)] -W[A'(V)] \right) \\
&\quad - \left( A(V)A(W) - A(W)A(V) + V[A(W)] -W[A(V)] \right) P^{(l)} \\
&\quad +h^{l+1}(B_{l+1}(V)A'(W) - A(W)B_{l+1}(V) - B_{l+1}(V)A'(V) + A(V)B_{l+1}(V)),
\end{split}
\end{equation}
where we substituted the expression for $V[P^{(l)}]$ and $W[P^{(l)}]$ again in order to obtain the last equality. Note also that the following expression, which appears in the last line of the equation,
\[
B_{l+1}(V)A'(W) - A(W)B_{l+1}(V) - B_{l+1}(V)A'(V) + A(V)B_{l+1}(V)
\]
is of oder $h$. Let us now compute the expressions for the curvature of $D$, which we are assuming is flat:
\[
0 = F_D(V,W) = D_VD_W - D_WD_V - D_{[V,W]}.
\] 
The last summand vanishes because we chose commuting vector fields, hence we get, for any section:
\begin{equation}\label{eq:gauge_equivalence_formal_connections_5}
\begin{split}
0 
&= A(V)A(W)(f) - A(W)A(V)(f)  - W[A(V)(f)] \\
&\quad+ A(V)W[f] + V[A(W)(f)] - A(W)V[f] \\
&= A(V)A(W)(f) - A(W)A(V)(f) -W[A(V)](f) + V[A(W)](f),
\end{split}
\end{equation}
which is the same as:
\[
0 = F_D(V,W) =  A(V)A(W) - A(W)A(V) + V[A(W)] -W[A(V)] .
\]
By computing the curvature in the same way for $D'$ we obtain:
\[
0 = F_{D'}(V,W) = A'(V)A'(W) - A'(W)A'(V) + V[A'(W)] -W[A'(V)] .
\]
By comparing with \eqref{eq:gauge_equivalence_formal_connections_4}, we see that $d_\T \alpha_l = 0 \pmod{h^{l+2}}$, and therefore, by \eqref{eq:gauge_equivalence_formal_connections_3.1}, we also have that $d_\T B_{l+1} = 0$. Since $H^1(\T;\R)$ is trivial, we can find a smooth function $P_{l+1} \colon \T \to \calD(M)$ such that $V[P_{l+1}h^{l+1}] = -B_{l+1}h^{l+1}$.\footnote{
The existence of $P_{l+1}$ follows from the existence of
an operator
\[
d^*: d(\Omega^{k-1}(\T)) \to \Omega^{k-1}(\T),
\]
called the anti-differential,
such that $d d^*\beta = \beta$ holds,
and which maps smooth families to smooth families.
For $\T$ compact, $d^*$ might be constructed using Hodge-theory.
For arbitrary $\T$ one can use the \v{C}ech-de Rham double complex to construct such an operator, as is done in \cite{gustavo2014}.} 
We now set $P^{(l+1)} = P^{(l)} + P_{l+1}h^{l+1}$ as the notation suggests. To conclude the inductive step and the proof it is enough to show that:
\[
V[P^{(l+1)}] - (P^{(l+1)}A'(V) - A(V)P^{(l+1)}) = 0 \pmod{h^{l+2}}.
\]
By expanding the left-hand side we get:
\[
V[P^{(l)}] - (P^{(l)}A'(V) - A(V)P^{(l)}) - h^{l+1}B_{l+1} 
+ h^{l+1} (P_{l+1}A'(V) - A(V)P_{l+1}),
\]
which is a multiple of $h^{l+2}$ since $P_{l+1}A'(V) - A(V)P_{l+1}$ is of order $h$.
\end{proof}

\begin{remark}
At first glance Theorem \ref{thm:gauge_transformation_of_flat_formal_connections}
seems too strong, since one might naively expect the vanishing of the fundamental group,
instead of the first cohomology, to be the relevant condition.
However two factors improve the situation: First of all, the bundle under consideration $C_h$ is assumed to be trivial.
Second, the Lie algebra in which our connection takes values in is the Lie algebra of formal differential operators
which vanish modulo $h$. This Lie algebra is filtered by those formal differential operators which start
at order $h^k$. The associated graded Lie algebra is easily seen to be abelian.
This two facts show that we are effectively dealing with the case of connection with values in
an abelian Lie algebra. Hence flatness reduces to closedness and gauge-equivalence
reduces to a shift by an exact one-form. This explains the purely cohomological nature of our result.
\end{remark}


We record the following consequence of our considerations:

\begin{corollary}
Let $\T$ be a manifold with trivial first cohomology group, i.e. $H^1(\T,\mathbb{R})=0$. If there exists a formal Hitchin connection $D$ in the bundle $C_h = \T \times C_h^\infty(M)$ on $\T$, then it is unique up to gauge equivalence.
\end{corollary}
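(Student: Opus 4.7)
The plan is to derive the corollary as an essentially immediate consequence of Theorem \ref{thm:gauge_transformation_of_flat_formal_connections}, since the only remaining work is to unpack the definitions and verify that the hypotheses of that theorem are satisfied.

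First I would recall Definition \ref{defn:formal_hitchin_connection}: a formal Hitchin connection is, by definition, a formal connection which (i) is compatible with the family of Berezin-Toeplitz star products $\{\star^{BT}_\sigma\}_{\sigma\in\T}$ in the sense of Definition \ref{defn:formal_connections_for_family_of_products}, and (ii) is flat as a connection on $C_h = \T \times C^\infty(M)[[h]]$. So if $D$ and $D'$ are both formal Hitchin connections in $C_h$, then by definition both lie in $\F(M,\star^{BT}_\sigma)$ and both have vanishing curvature.

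Next I would apply Theorem \ref{thm:gauge_transformation_of_flat_formal_connections} to the pair $(D,D')$. The standing hypothesis on the base is $H^1(\T,\mathbb{R}) = 0$, which matches the assumption of that theorem. The theorem then produces a self-equivalence $P \in C^\infty(\T,\D_h(M))$ of the family $\{\star^{BT}_\sigma\}_{\sigma\in\T}$, i.e.\ an element satisfying \eqref{eq:self_equivalence_star_product} and $P = \mathrm{id} \pmod{h}$, such that
\[
D'_V \;=\; P^{-1} D_V P
\]
for every vector field $V$ on $\T$. This is precisely the statement that $D$ and $D'$ are gauge equivalent via a differential self-equivalence of the family of star products, which completes the proof.

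There is no real obstacle here: the entire content sits in Theorem \ref{thm:gauge_transformation_of_flat_formal_connections}, whose proof carried out the inductive construction of $P$ order-by-order in $h$ using the hypothesis $H^1(\T,\mathbb{R})=0$ to integrate the closed $1$-forms arising at each step. The corollary is simply the specialization of that general result to the particular family of star products $\{\star^{BT}_\sigma\}_{\sigma\in\T}$ and to the class of formal connections singled out by Definition \ref{defn:formal_hitchin_connection}.
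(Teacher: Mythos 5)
Your proposal is correct and follows exactly the paper's intended route: the corollary is recorded as an immediate consequence of Theorem \ref{thm:gauge_transformation_of_flat_formal_connections}, since by Definition \ref{defn:formal_hitchin_connection} any two formal Hitchin connections are flat elements of $\F(M,\star^{BT}_\sigma)$ and the hypothesis $H^1(\T,\mathbb{R})=0$ lets that theorem produce the gauge equivalence $P$. Nothing further is needed.
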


\section{Existence of formal connections}\label{section: existence}

In this section we study the question of existence of formal connections on a symplectic manifold $(M,\omega)$,  equipped with a smooth family of natural star products $\{\star_\sigma\}_{\sigma \in \T}$. As we will see, this problem can be reduced to a cohomological condition in terms of the corresponding family $\cl(\star_\sigma)$
of characteristic $2$-forms, see Subsection \ref{subsection: Fedosov}.

We briefly review the Hochschild complex $HC^\bullet(A,A)$
of an algebra $A$ with values in itself, relying on \cite{Gerstenhaber,waldmann2007poisson}.
It is given by
$$ HC^k(A,A) := \mathrm{Hom}(A^{\otimes k},A),$$
the space of multilinear maps from $A$ to itself.
The graded vector space $HC^\bullet(A,A)$ comes equipped
with the Gerstenhaber bracket $[\cdot,\cdot]_{G}$.
For $\psi \in HC^{r+1}(A,A)$ and $\phi \in HC^{s+1}(A,A)$, it is given by
\begin{eqnarray*}
 [\psi, \phi](a_0,\dots, a_{r+s})
:= \sum_{i=0}^r(-1)^{is}\psi(a_0,\dots,a_{i-1},\phi(a_i,\dots,a_{i+s}),
a_{i+s+1},\dots, a_{r+s}) \\
-(-1)^{rs} \sum_{j=0}^s (-1)^{jr}\phi(a_0,\dots,a_{j-1},\psi(a_j,\dots,a_{j+r}),
a_{j+r+1},\dots, a_{r+s}).
\end{eqnarray*}
The bracket $[\cdot,\cdot]_{G}$ makes $HC(A,A)$ into a graded Lie algebra
if we assign to an element $\psi\in HC^{r+1}(A,A)$ the degree $r$.
Moreover, the associativity of the product $m: A\otimes A \to A$ of $A$
can be rewritten as
$$ [m,m]_G=0.$$
As a consequence, the map $d_H(\phi):= [m,\phi]_G$ defines a coboundary operator on $HC^\bullet(A,A)$,
which is referred to as the Hochschild differential.
The cohomology of $HC^\bullet(A,A)$ with respect to $d_H$ is called
the Hochschild cohomology of $A$ and is known to control the infinitesimal
deformations of the algebra $A$.

We will be interested in the case of $A$ being $C^\infty_h(M)$,
seen as a module over $\mathbb{C}[[h]]$.
To get a reasonable cohomology theory,
one has to take the Fr\'echet-topology into account (thus modifying the tensor
product and the space of homomorphisms). Alternatively, one can restrict
to the space of multi-differential operators $\D^m(M)$, where $m$ is the number of arguments.
 The Gerstenhaber bracket $[\cdot,\cdot]_G$ and the Hochschild differential $d_H$ restrict to $\D^\bullet(M)=\oplus_{m\ge 0}\D^m(M)$.
We will also consider the space of formal multi-differential operators $\D^\bullet_h(M)$ and the $\mathbb{C}[[h]]$-linear
extensions of $[\cdot,\cdot]_G$ and $d_H$ to $\D^\bullet_h(M)$, respectively.

Let $\{\star_\sigma\}_{\sigma \in \T}$ be a family of star products for $(M,\omega)$,
smoothly parametrized by $\T$.
Given a vector field $V$ on $\T$, 
we write $V[\star_\sigma]$ for the product whose $i$-th coefficient is $V[c^i]$,
i.e. the variation of $\{\star_\sigma\}_{\sigma \in \T}$ in the direction of $V$.
The operator
$$B_V(f,g) = f V[\star_\sigma] g$$ 
can be seen as a family of elements in $\D^2_h(M)$.
Since associativity of $\{\star_\sigma\}_{\sigma \in \T}$ can be written as
$$ [\star_{\sigma},\star_\sigma]_G = 0,$$
we obtain 
$$ d_H B_V = [\star_\sigma,V[\star_{\sigma}]]_{G} = \frac{1}{2} V([\star_{\sigma},\star_\sigma]_G) = 0,$$
i.e. $B_V$ is closed with respect to the Hochschild differential associated to the family
of star products $\{\star_\sigma\}_{\sigma \in \T}$.

We now reconsider the compatibility requirement between a formal connection $D$ and $\{\star_\sigma\}_{\sigma\in \T}$ from Definition \ref{defn:formal_connections_for_family_of_products}.
Using the decomposition $D_V = V + A(V)$,
the compatibility can be written as
\begin{equation}\label{eq:condition_on_A_for_existence_of_connection}
 A(V)(f)\star_\sigma g + f \star_\sigma A(V)(g) - A(V)(f\star_\sigma g) = f V[\star_\sigma] g.
\end{equation}
We see that the above equation can be further rewritten as
$$ d_H A(V) = V[\star_\sigma]$$
and we arrive at the following interpretation of \eqref{eq:condition_on_A_for_existence_of_connection}:

\begin{prop}\label{prop:existence_connection_equivalent_to_cochain_exact}
Let $(M,\omega)$ be a symplectic manifold, equipped with a family of natural star products $\{\star_\sigma\}_{\sigma \in \T}$ on $M$, smoothly parametrized by $\T$. 

There is a one-to-one correspondence between:
\begin{enumerate}
\item Formal connections $D_V= V + A$ on $\T$ which are compatible with $\{\star_\sigma\}_{\sigma \in \T}$.
\item Families of formal differential operators $A(V) \in \D^1_h(M)$, with
$A(V)= 0 \, \mathrm{mod} \, h$ that satisfy
\[
d_H A(V) = V[\star_\sigma],
\]
where $d_H$ denotes the Hochschild coboundary operator with respect to the family of star products $\{\star_\sigma\}_{\sigma \in \T}$.
\end{enumerate}

In particular, a family of star products $\{\star_\sigma\}_{\sigma \in \T}$
admits a compatible formal connection if and only if the family of cocycles
given by $B(\sigma,V) = V[\star_\sigma]$ is exact.
\end{prop}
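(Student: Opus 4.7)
My plan is to view the proposition as a direct reformulation of the Hochschild-theoretic computation immediately preceding it, and to spell out both directions of the one-to-one correspondence as a definition chase. First, I would fix a formal connection $D_V = V + A(V)$ compatible with $\{\star_\sigma\}_{\sigma \in \T}$ and expand the derivation property \eqref{eq:formal_connection_is_derivation}. Using the family Leibniz rule $V[f \star_\sigma g] = V[f] \star_\sigma g + f \star_\sigma V[g] + f\, V[\star_\sigma]\, g$, and cancelling the terms linear in $V[f]$ and $V[g]$, I arrive at
\[
A(V)(f) \star_\sigma g + f \star_\sigma A(V)(g) - A(V)(f \star_\sigma g) = f\, V[\star_\sigma]\, g.
\]
By the Gerstenhaber-bracket formula applied to $m = \star_\sigma$ and $A(V) \in \D^1_h(M)$, the left-hand side is exactly $d_H A(V)(f, g)$, so the compatibility is equivalent to the Hochschild equation $d_H A(V) = V[\star_\sigma]$. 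Reversing the same calculation, any family $A(V) \in \D^1_h(M)$ with $A(V) = 0 \, \mathrm{mod}\, h$ and $d_H A(V) = V[\star_\sigma]$ yields a compatible formal connection $D_V := V + A(V)$, establishing the bijection of the two sets described in the proposition.

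For the final assertion, I would invoke the observation already recorded in the paragraphs above, namely that $B(\sigma, V) = V[\star_\sigma]$ is automatically a Hochschild $2$-cocycle, obtained by differentiating $[\star_\sigma, \star_\sigma]_G = 0$ along $V$. Existence of a compatible formal connection is then equivalent to asking that this family of cocycles be exact in $\D^\bullet_h(M)$. The only mildly technical point is the requirement $A(V) = 0 \, \mathrm{mod}\, h$ in the definition of a formal connection; since $V[\star_\sigma] = O(h)$---because the order-zero coefficient $c^0$ is the pointwise product and therefore $\sigma$-independent---a primitive of $V[\star_\sigma]$ can always be constructed order by order starting from $A^0(V) = 0$, so the extra constraint imposes no real restriction.

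I do not expect any serious obstacle: the core computation is precisely the one already carried out in the body of the section, and what remains is to package it as an explicit bijection and to verify the small bookkeeping point just mentioned about the mod-$h$ condition.
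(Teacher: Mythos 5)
Your argument is correct and is essentially identical to the paper's own proof, which consists precisely of the computation placed immediately before the proposition: expanding the derivation property via $V[f\star_\sigma g]=V[f]\star_\sigma g+f\star_\sigma V[g]+f\,V[\star_\sigma]\,g$, recognizing the left-hand side as $d_H A(V)$, and invoking the already-established cocycle property of $V[\star_\sigma]$. Your closing aside that the condition $A(V)=0 \,\mathrm{mod}\, h$ "imposes no real restriction" goes beyond what the paper argues and is not fully justified as stated -- the mere existence of some primitive does not by itself allow an order-by-order construction starting from $A^0(V)=0$; one must note that the order-$h$ part of $d_H A(V)=V[\star_\sigma]$ forces the leading term of any primitive to be a symplectic vector field (since the Poisson bracket does not vary with $\sigma$), which therefore extends to a genuine derivation of $\star_\sigma$ that can be subtracted -- but this side remark does not affect the correspondence as stated, since item (2) already builds the $\mathrm{mod}\, h$ condition into the data.
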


By work of Weinstein-Xu \cite{WeinsteinXu}, the Hochschild cohomology
of $C_h^\infty(M)$ with respect to 
a star product is isomorphic to the de Rham cohomology $hH^\bullet(M, {\mathbb R})[[h]]$.
Consequently, one expects that the cohomological condition
on $V[\star_\sigma]$ from Proposition \ref{prop:existence_connection_equivalent_to_cochain_exact} translates into a cohomological condition
on the characteristic $2$-forms $\cl(\star_\sigma)$.
We will show that this is indeed the case. Instead of applying the results of \cite{WeinsteinXu},
we directly work in Fedosov's framework, which we modify for our purposes. 
More precisely, we aim at showing that every choice of trivialization
for the family of characteristic $2$-forms $\cl(\star_\sigma)$ leads to a compatible
formal connection of the corresponding star products $\star_{\sigma}$. We introduce
$$ {\mathcal P} := \{ \beta \in \Omega^1(\T,\Omega^1(M)[[h]]) \mid d_M i_V\beta = V[\cl(\star_\sigma)]\},$$
which we regard as the space of trivializations of all variations of $\cl(\star_\sigma)$
and
$$ {\mathcal C} := \{ A \in h\Omega^1(\T,\D^1_h(M))  \mid d_H i_V A = V[\star_\sigma]\},$$
the space of trivializations of all variations of the family $\star_\sigma$ in $\D_h^2(M)$.

\begin{thm}\label{thm:trivializations->trivializations}
Let $(M,\omega)$ be a symplectic manifold which is equipped with a family
of natural star products $\{\star_\sigma\}$, smoothly parametrized by $\T$.
There is a natural map
$$ C : {\mathcal P}  \rightarrow {\mathcal C}
$$
\end{thm}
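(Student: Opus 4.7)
The strategy is to adapt Fedosov's construction to transfer trivializations of the characteristic $2$-form variations to trivializations of the star product variations. By Gutt's refinement \cite{gutt2003natural}, every natural star product is equivalent to a preferred Fedosov star product via a preferred equivalence; selecting these smoothly in $\sigma$, I may assume without loss of generality that $\star_\sigma = \star_{\nabla,\alpha_\sigma}$ is a family of Fedosov star products arising from a fixed symplectic connection $\nabla$ and smoothly varying Weyl curvatures $\alpha_\sigma = \cl(\star_\sigma)$. Denote by $r_\sigma \in \Omega^1(M,W)$ the associated Fedosov sections (unique under the gauge $\delta^* r_\sigma = 0$), by $D_{r_\sigma}$ the Fedosov connections, and by $\tau_\sigma\colon C_h^\infty(M) \to \Gamma(W)$ the identification of formal functions with $D_{r_\sigma}$-parallel sections, so that $\tau_\sigma(f \star_\sigma g) = \tau_\sigma(f) \circ_{MW} \tau_\sigma(g)$.

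Given $\beta \in \mathcal{P}$ and a vector field $V$ on $\T$, I differentiate Fedosov's equation \eqref{eq:scalar_form_Weyl_connection} along $V$ to get $D_{r_\sigma}(V[r_\sigma]) = -V[\alpha_\sigma]$. Since $D_{r_\sigma}$ reduces to $d_M$ on scalar-valued forms and $d_M \beta(V) = V[\alpha_\sigma]$, this yields
\[
D_{r_\sigma}\bigl(V[r_\sigma] + \beta(V)\bigr) = 0,
\]
with $\beta(V)$ regarded as a central element of $\Omega^1(M,W)[[h]]$. By the acyclicity of $D_{r_\sigma}$ in positive form-degrees (the Fedosov counterpart to the acyclicity of $\delta$ recalled in Subsection \ref{subsection: Fedosov}), one can solve
\[
D_{r_\sigma} \rho(V) = V[r_\sigma] + \beta(V)
\]
for some $\rho(V) \in \Omega^0(M,W)[[h]]$, unique modulo parallel sections. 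Fix the ambiguity by $\delta^* \rho(V) = 0$, which makes $\rho(V)$ depend smoothly on $\sigma$ and linearly on $\beta$.

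Introduce the operator $\mathcal{D}_V := V + \tfrac{i}{h}\mathrm{ad}_{MW}(\rho(V))$ on sections of $\T \times \Gamma(W) \to \T$. A direct computation gives
\[
[\mathcal{D}_V, D_{r_\sigma}] = \tfrac{i}{h}\mathrm{ad}\bigl(V[r_\sigma] - D_{r_\sigma}\rho(V)\bigr) = -\tfrac{i}{h}\mathrm{ad}(\beta(V)) = 0,
\]
since $\beta(V)$ is central. Consequently $\mathcal{D}_V$ preserves Fedosov-parallel sections and descends through $\tau_\sigma$ to an operator
\[
\tilde A(V)(f) := p\bigl(\tfrac{i}{h}[\rho(V), \tau_\sigma(f)]_{MW}\bigr)
\]
on $C_h^\infty(M)$. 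The Hochschild identity $d_H \tilde A(V) = V[\star_\sigma]$ then follows from three observations: $\mathcal{D}_V$ is a derivation of $\circ_{MW}$ (which is $\sigma$-independent), $\tau_\sigma$ is a $\sigma$-dependent algebra map from $\star_\sigma$ onto parallel sections with $\circ_{MW}$, and differentiating the algebra-map identity in $V$ and then projecting via $p$ produces precisely the term $f V[\star_\sigma] g$.

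The operator $\tilde A(V)$ need not lie in $h \D_h^1(M)$: its leading coefficient $X_V := \tilde A(V)|_{h=0}$ is a symplectic vector field on $M$ whose de Rham class matches $[\beta(V)|_{h=0}] \in H^1(M,\R)$. Using Proposition \ref{prop:GuttRawnsley}, lift $X_V$ to a derivation $\hat X_V$ of $\star_\sigma$, and define
\[
C(\beta)(V) := \tilde A(V) - \hat X_V.
\]
Since $\hat X_V$ is $d_H$-closed, $d_H C(\beta)(V) = V[\star_\sigma]$, and by construction $C(\beta)(V) \in h \D_h^1(M)$, so $C(\beta) \in \mathcal{C}$. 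The main obstacle is combining smoothness with naturality: namely, ensuring that Gutt's preferred equivalences, the Fedosov sections $r_\sigma$, the normalized solutions $\rho(V)$, and the derivation lifts $\hat X_V$ can all be chosen to vary smoothly in $\sigma$ and linearly in $\beta$. The residual ambiguity (the lift $\hat X_V$ is unique only modulo $h \cdot \mathrm{Der}(M, \star_\sigma)$, and $\rho(V)$ is unique modulo parallel sections, giving an inner-derivation ambiguity in $\tilde A(V)$) is absorbed by the standard gauge freedom in $\mathcal{C}$, and naturality of $C$ follows from the canonical nature of each ingredient.
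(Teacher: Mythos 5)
Your strategy is the paper's own (reduce to Fedosov form, solve a Fedosov-type equation for an element $\rho(V)$, and set $A(V)=\tfrac{i}{h}p([\rho(V),\tau_r(\cdot)])$), but there are two genuine gaps. First, the reduction ``without loss of generality $\star_\sigma=\star_{\nabla,\alpha_\sigma}$ with a \emph{fixed} symplectic connection'' is not what \cite{gutt2003natural} provides: the preferred Fedosov data attached to a natural star product include a symplectic connection $\nabla_\sigma$ that varies with $\sigma$, and you give no construction of a smooth-in-$\sigma$ family of equivalences onto Fedosov products built from one fixed connection. The paper keeps $\nabla_\sigma$ varying, and this is where a substantive part of its proof lives: the variation of the connection contributes $V[d_{\nabla_\sigma}]=\tfrac{i}{2h}\ad(i_VS_\sigma)$, so the equation to solve is $D_r(i_Vs)=V[r]+\tfrac{1}{2}i_VS_\sigma-i_V\beta$ (Proposition \ref{prop: s}), not your $D_r\rho(V)=V[r]+\beta(V)$. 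With varying $\nabla_\sigma$ your starting identity $D_r(V[r])=-V[\alpha_\sigma]$ is false, since differentiating \eqref{eq:scalar_form_Weyl_connection} also produces $V[R]$ and $V[d_{\nabla_\sigma}]r$; so either you must justify the fixed-connection reduction (a nontrivial extra step you do not supply) or redo the computation with the $S_\sigma$ term as in the paper. (Also, the normalization $\delta^*\rho(V)=0$ is vacuous on $W$-valued $0$-forms, so by itself it does not single out a solution nor give the smooth and linear dependence you claim from it.)

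Second, your final correction --- subtracting a derivation lift $\hat X_V$ of the order-zero symplectic vector field --- substitutes an unproven step for the one the paper actually proves. In the paper, the lowest total-degree analysis in Proposition \ref{prop: s2}(1) shows $A(V)(f)=-h\,i_Vi_{X_f}\beta_1\pmod{h^2}$; for the trivializations relevant to Theorem \ref{thm:existence_formal_connection_iff_char_class_constant}, where $\beta$ is of order $h$ because $V[\cl(\star_\sigma)]$ is, the output therefore already lies in $h\Omega^1(\T,\D^1_h(M))$ and no correction is needed. Your $\hat X_V$ would have to be chosen smoothly in $\sigma$ and linearly in $\beta$ while remaining $d_H$-closed; Proposition \ref{prop:GuttRawnsley} gives only non-canonical (locally inner) existence, and you explicitly defer exactly this point (``the main obstacle''), so the map $C$ is not actually constructed. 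In short: the skeleton matches the paper, but the two places where the paper does real work --- handling the variation of the symplectic connection, and verifying that the resulting $1$-form is $O(h)$ by an explicit lowest-order computation --- are respectively bypassed by an unjustified assumption and replaced by an unproven lifting step.
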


The proof of Theorem \ref{thm:trivializations->trivializations} is postponed to the next subsection.
If we combine Theorem \ref{thm:trivializations->trivializations} and Proposition \ref{prop:existence_connection_equivalent_to_cochain_exact}, we obtain

\begin{thm}\label{thm:existence_formal_connection_iff_char_class_constant}
Let $(M,\omega)$ be a symplectic manifold which is equipped with a family
of natural star products $\{\star_\sigma\}$, smoothly parametrized by $\T$.

Then the following statements are equivalent:
\begin{enumerate}
 \item The cohomology class of the family of characteristic $2$-forms $\cl(\star_\sigma)$ is locally constant in $\T$.
 \item There is a $1$-form $A\in \Omega^1(\T,h \D^1_h(M))$ with values in formal differential operators on $M$ such that for any vector field $V$ on $\T$, and any smooth functions $f$ and $g$ on $M$ the identity
 \[
f V[\star] g =  A(V)(f)\star g + f \star A(V)(g) - A(V)(f\star g)
 \]
 holds. 
 \item The family of star products admits a formal connection.
\end{enumerate}
\end{thm}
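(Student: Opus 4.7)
The plan is to reduce the theorem to three links: $(2) \Leftrightarrow (3)$, $(1) \Rightarrow (3)$, and $(3) \Rightarrow (1)$. The first link is immediate from Proposition \ref{prop:existence_connection_equivalent_to_cochain_exact}, since the compatibility condition \eqref{eq:condition_on_A_for_existence_of_connection} is precisely $d_H A(V) = V[\star_\sigma]$, so a family $A$ satisfying this equation is nothing but a compatible formal connection written in the form $D_V = V + A(V)$.

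For $(1) \Rightarrow (3)$, I would argue as follows. The assumption that $[\cl(\star_\sigma)]$ is locally constant in $\T$ means that for every vector field $V$ on $\T$ the formal two-form $V[\cl(\star_\sigma)] \in \Omega^2(M)[[h]]$ is $d_M$-exact. The remaining task is to produce a primitive that depends smoothly on $\sigma$ and linearly on $V$, i.e.\ an element $\beta \in \mathcal{P}$. This is carried out by the same method indicated in the footnote to the proof of Theorem \ref{thm:gauge_transformation_of_flat_formal_connections}: one picks an anti-differential $d^*$ (via Hodge theory for compact $M$, or the \v{C}ech--de Rham construction in general) and sets $i_V\beta := d^* V[\cl(\star_\sigma)]$. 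Applying the map $C$ of Theorem \ref{thm:trivializations->trivializations} to $\beta$ produces $A := C(\beta) \in \mathcal{C}$, and Proposition \ref{prop:existence_connection_equivalent_to_cochain_exact} then upgrades this $A$ to a formal connection compatible with $\{\star_\sigma\}_{\sigma \in \T}$.

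For $(3) \Rightarrow (1)$, the most direct route is via parallel transport. Given a compatible formal connection $D_V = V + A(V)$, and a smooth curve $\gamma:[0,1]\to\T$, parallel transport along $\gamma$ is the solution of the ODE $\partial_t P_t = -A(\dot{\gamma}(t)) P_t$ with $P_0 = \mathrm{Id}$. Since $A = 0 \bmod h$, this equation can be solved order by order in $h$, yielding a formal series $P_t = \mathrm{Id} + h P_t^{(1)} + \cdots$ of differential operators. Because each $D_V$ is a derivation of the corresponding star product, $P_t$ intertwines $\star_{\gamma(0)}$ with $\star_{\gamma(t)}$, hence is a differential equivalence between the two star products. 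Equivalent differential star products have the same characteristic class, so $[\cl(\star_\sigma)]$ is constant along any curve $\gamma$ remaining in a neighborhood where the ODE can be solved, which proves local constancy.

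The main obstacle I anticipate is neither implication in isolation, but rather ensuring that the auxiliary constructions behave well in families: in $(1) \Rightarrow (3)$ one must exhibit a $d^*$ mapping smooth families to smooth families on an arbitrary base $\T$, while in $(3) \Rightarrow (1)$ one has to argue that the order-by-order solution of the parallel transport ODE actually produces a \emph{differential} operator at each order (and not merely a formal one). The first is handled exactly as in the footnote of Theorem \ref{thm:gauge_transformation_of_flat_formal_connections}; the second follows from naturality of the star products together with the fact that $A(V)$ is a formal differential operator, so that composition and iterated integration preserve the class of differential operators at each fixed order in $h$.
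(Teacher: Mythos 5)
Your proposal is correct and follows essentially the same route as the paper: $(2)\Leftrightarrow(3)$ via Proposition \ref{prop:existence_connection_equivalent_to_cochain_exact}, $(1)\Rightarrow(3)$ by producing a trivialization $\beta\in\mathcal{P}$ of the variations of $\cl(\star_\sigma)$ and feeding it into the map $C$ of Theorem \ref{thm:trivializations->trivializations}, and $(3)\Rightarrow(1)$ by solving the parallel transport equation order by order in $h$ and using that the resulting equivalence of star products forces equal characteristic classes. The only cosmetic difference is that the paper builds $\beta$ as $-d_\T\gamma$ from a smooth family of primitives $\gamma_\sigma$ of $\alpha_\sigma-\alpha_{t_0}$, whereas you apply an anti-differential directly to $V[\cl(\star_\sigma)]$ (which also silently supplies the exactness of $V[\cl(\star_\sigma)]$); this is a minor variation of the same idea.
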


\begin{proof}
We start with the implication $(1) \Rightarrow (2)$.
Assuming Theorem \ref{thm:trivializations->trivializations}, the only part of this statement that needs additional arguing is that,
given family of closed $2$-forms $\alpha_\sigma := \cl(\star_\sigma)$ with locally constant cohomology class,
one can find a smooth $1$-form $\beta$ on $\T$ with values in
$\Omega^1(M)$ such that for all vector fields $V$ on $\T$ the identity
$$
d_M i_V \beta = V[\alpha_\sigma]
$$
holds.
To this end, we assume without loss of generality that $\T$ is connected.
We fix a base-point $t_0 \in \T$ and consider the family
$$ \alpha - \alpha_{t_0},$$
which is exact. 
Hence there is a family of $1$-forms $\gamma$ on $M$ such that
$$ d_M \gamma = \alpha - \alpha_{t_0}$$
holds.
If we apply the Lie derivative with respect to a vector field $V$ on $\T$, we obtain:
\[
- d_M i_V(d_\T \gamma) = V[\alpha].
\]
Hence $\beta:= -d_\T \gamma$ is the desired $1$-form on $\T$ with values
in $\Omega^1(M)$.

The implication $(2) \Rightarrow (3)$ is precisely the content of Proposition \ref{prop:existence_connection_equivalent_to_cochain_exact}.

The implication $(3) \Rightarrow (1)$ relies on the parallel transport of
the connection $D_V = V + A(V)$.
Given two points $\sigma$ and $\sigma'$ in the same connected component of
$\T$, we can choose a smooth path $\gamma$ which starts at $\sigma$ and ends
at $\sigma'$. We pull back the family $\star_\sigma$ and the connection
$D_V$ along $\gamma$.
We claim that the parallel transport $\Phi(t)$ of $D_V$ along $\gamma$ exists and is unique.
If we consider a fixed order of $h$, this reduces to the existence and uniqueness
of solutions to an ODE of the form
$$ \frac{d}{dt} F(t) = G(t),$$
with $G(t)$ a given smooth one-parameter family of differential operators on $M$.
Modulo constants (with respect to $t$) there is cleary a unique solution, given
by integration over $t$.

We can now consider the family of star products
$$ \Phi(t)\circ \star_{\sigma} \circ \Phi(t)^{-1}\otimes \Phi(t)^{-1}.$$
At $t=0$, it coincides with $\star_{\gamma(t)}$ and satisfies the same ordinary
differential equation, which is
$$ \frac{d}{dt} \star_{\gamma(t)} = 
\star_{\gamma(t)}\circ (A\left(\frac{d\gamma}{dt} \right)\otimes \mathrm{id} + \mathrm{id}\otimes A\left(\frac{d\gamma}{dt} \right)) - A\left(\frac{d\gamma}{dt} \right) \circ \star_{\gamma(t)}.$$
By uniqueness of the solution to this ODE, we conclude
$$ \star_{\gamma(t)} = \Phi(t)\circ \star_{\sigma}\circ \Phi(t)^{-1}\otimes \Phi(t)^{-1}.$$
In particular, $\star_\sigma$ and $\star_{\sigma'}$ are equivalent star products.
Consequently their characteristic classes coincide.
\end{proof}

The discussion above specializes to the case of a compact symplectic manifold $M$ with a family of compatible K\"ahler structures parametrized by $\T$. As in Subsection \ref{subsection: BT}, one
can consider the family of Berezin-Toeplitz star products $\{\star_\sigma\}_{\sigma \in \T}$.
The characteristic class of $\star_\sigma$ is proportional to the first Chern class of $M$,
 see \cite{Hawkins}.
Since the first Chern class
is independent of the compatible complex structure $\sigma$,
we may apply Theorem \ref{thm:existence_formal_connection_iff_char_class_constant}
to the family $\{\star_\sigma\}_{\sigma \in \T}$ and obtain the existence of a
formal connection which is compatible with $\{\star_\sigma\}_{\sigma \in \T}$. Even better, we might directly apply Theorem \ref{thm:trivializations->trivializations}:
The only input data is a family $\beta_\sigma$ of $1$-forms on $\T$ with values in $\Omega(M)^1[[h]]$
such that the condition from Theorem \ref{thm:trivializations->trivializations} holds.
Using Hodge-theory with respect to the family of K\"ahler metrics yields a preferred such family.
We hence obtain the following result:

\begin{thm}\label{thm: a bit of Kahler}
Let $(M,\omega)$ be a compact, symplectic manifold equipped with a family of compatible K\"ahler structures parametrized by a manifold $\T$. Let us consider the corresponding family of Berezin-Toeplitz star products $\{\star_\sigma\}_{\sigma\in\T}$. Then the family admits a preferred formal connection.
\end{thm}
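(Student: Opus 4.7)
The plan is to apply Theorem \ref{thm:trivializations->trivializations} to a canonical element $\beta \in {\mathcal P}$ produced via Hodge theory; by Proposition \ref{prop:existence_connection_equivalent_to_cochain_exact} the resulting $A := C(\beta) \in {\mathcal C}$ then defines the desired preferred formal connection $D_V := V + A(V)$. First I would check the cohomological hypothesis needed to make sense of this. By \cite{Hawkins} the characteristic class $[\cl(\star_\sigma)]$ of the Berezin-Toeplitz star product is a fixed universal combination of $[\omega]$ and the first Chern class of $M$, which is independent of the compatible complex structure $I_\sigma$. Hence, writing $\alpha_\sigma := \cl(\star_\sigma)$, the family of closed $2$-forms $\alpha_\sigma - \alpha_{\sigma_0}$ is exact order-by-order in $h$, for any chosen base-point $\sigma_0$ in a given connected component of $\T$.

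To produce the preferred $\beta$, I would use the K\"ahler metrics $g_\sigma$ to single out canonical primitives of these exact $2$-forms. Each $g_\sigma$ determines a Hodge Laplacian $\Delta_\sigma$ with Green operator $G_\sigma$, codifferential $d^*_\sigma$, and harmonic projection $H_\sigma$. For an exact form $\eta$, the Hodge decomposition collapses to $\eta = d_M(d^*_\sigma G_\sigma \eta)$, since both $d^*_\sigma d_M G_\sigma \eta$ and $H_\sigma \eta$ vanish. Applying this coefficient-wise in $h$ to $\alpha_\sigma - \alpha_{\sigma_0}$ yields a smooth family $\gamma \in C^\infty(\T, \Omega^1(M)[[h]])$ with $d_M \gamma = \alpha - \alpha_{\sigma_0}$, and setting $\beta := -d_\T \gamma \in \Omega^1(\T, \Omega^1(M)[[h]])$ produces, as in the argument used in the proof of Theorem \ref{thm:existence_formal_connection_iff_char_class_constant}, a trivialization satisfying $d_M i_V \beta = V[\alpha_\sigma]$ for every vector field $V$ on $\T$. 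Hence $\beta \in {\mathcal P}$.

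With $\beta$ in hand, Theorem \ref{thm:trivializations->trivializations} produces the preferred element $A := C(\beta) \in {\mathcal C}$, and Proposition \ref{prop:existence_connection_equivalent_to_cochain_exact} upgrades it to a compatible formal connection. Since $\beta$, and thus $A$, depends only on the family of K\"ahler metrics (and the choice of base-points), the construction is canonical and the resulting formal connection is preferred. The main technical step to justify is the smoothness of $\gamma$ in $\sigma$; this reduces to the smooth dependence of the Green operator $G_\sigma$ and the codifferential $d^*_\sigma$ on the K\"ahler metric $g_\sigma$. The key point is that $\dim \ker \Delta_\sigma$ is a Betti number of $M$ and hence constant in $\sigma$, so the harmonic projection varies smoothly; standard elliptic regularity then provides smoothness of $G_\sigma$ at each order in $h$, completing the argument.
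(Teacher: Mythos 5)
Your argument follows essentially the same route as the paper: constancy of the characteristic class of the Berezin--Toeplitz star products via \cite{Hawkins}, a Hodge-theoretic choice (with respect to the family of K\"ahler metrics) of a preferred trivialization $\beta \in \mathcal{P}$, and then Theorem \ref{thm:trivializations->trivializations} combined with Proposition \ref{prop:existence_connection_equivalent_to_cochain_exact} to obtain the compatible formal connection $D_V = V + A(V)$. The paper itself only sketches this and explicitly defers the details (in particular the smooth dependence on $\sigma$ that you address via the Green operator and the constancy of the harmonic space dimensions) to future work, so your proposal is a correct filling-in of the same approach.
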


The details of the proof will be given in future work.
There, we also hope to compare more closely the formal connection from Theorem \ref{thm: a bit of Kahler}
to the expression for the formal Hitchin connection given in Equation (\ref{eq:formal_h_connection_explicit}), Subsection \ref{subsection: formal Hitchin}.

\subsection{Proof of Theorem \ref{thm:trivializations->trivializations}}

This subsection contains the proof of Theorem \ref{thm:trivializations->trivializations}.
We will use Fedosov's framework for the deformation quantization of
symplectic manifolds, which we reviewed in Subsection \ref{subsection: Fedosov}.
The first step of the proof is in fact the reduction to families
of Fedosov star products via \cite[Theorem 4.1]{gutt2003natural}:
Gutt and Rawnsley show there that every natural star product is equivalent
to a preferred star product of Fedosov-type through a preferred equivalence.
Hence we assume from now on without loss of generality that
each member $\star_\sigma$ of the family of star products which we consider
equals $\star_{\nabla_{\sigma},\alpha_{\sigma}}$, where:
\begin{itemize}
\item $\nabla_{\sigma}$ is a symplectic connection for $(M,\omega)$,
\item $\alpha_{\sigma}$ is an element of $\omega + hZ^2(M,\mathbb{R})[[h]]$.
\end{itemize}
Moreover, these data fit together into smooth families parametrized by $\T$.

Our aim is to understand the dependence of Fedosov's construction from Subsection
\ref{subsection: Fedosov} on the data $\{\nabla_\sigma,\alpha_\sigma\}_{\sigma \in \T}$. 
As in the proof of Theorem \ref{thm:trivializations->trivializations}
we can fix a $1$-form $\beta$ on $\T$ with values in $\Omega^1(M)[[h]]$
such that for all vector fields $V$ on $\T$ the equation
$$ d_M i_V \beta = V[\alpha_\sigma]$$
holds.

We now go through Fedosov's construction, seen as fibred over the parameter
space $\T$. In the first step, we realize each star product $\star_{\nabla_\sigma,\alpha_\sigma}$
with the help of a Fedosov connection $D_{r(\sigma)}$. In the following, we interpret
$r$ as a family of elements in $\Omega^2(M,W)$. Recall
that $W$ denotes the Weyl bundle over $M$.
By definition, the star product $\star_{\nabla_\sigma,\alpha_\sigma}$
is given by
$$ f\star_{\nabla_\sigma,\alpha_\sigma} g = p(\tau_{r(\sigma)}(f)\circ_{MW} \tau_{r(\sigma)}(g)),$$
where $\tau_{r(\sigma)}(f)$ is the unique extension of $f$ to $\Gamma(W)$
which is constant with respect to the connection $D_{r(\sigma)}$
and $p$ is the canonical projection $\Gamma(W) \to C_h^\infty(M)$.

We claim that there is a $1$-form $s$ on $\T$ with values in $\Omega^0(M,W)$
such that the operator
$$ \hat{D}_s := d_\tau + \frac{i}{h}\ad(s)$$
commutes with $D_r$ in the graded sense, i.e.
$$ [\hat{D}_s, D_r] := \hat{D}_s \circ D_r + D_r \circ \hat{D}_s = 0.$$

Using the form of $D_r$ and our ansatz for $\hat{D}_s$, we obtain the following
expression for the graded commutator
$$ i_V [\hat{D}_s,D_r] = \frac{i}{h}\ad\left(-D_r(i_Vs) + V[r] \right) + V[d_{\nabla_\sigma}],$$
where $V$ is an arbitrary vector field on $\T$.
Recall that $\nabla_\sigma$ is a family of symplectic connections.
These form an affine space over the subspace $\mathcal{S}$ of $\Omega^1(M,\mathrm{End}(TM))$ which corresponds to totally symmetric contravariant $3$-tensors after contraction with $\omega$.
Hence the variation of $\nabla_\sigma$ is encoded by 
a $1$-form $S_\sigma$ on $\T$ with values in $\mathcal{S}$.
In the following we use $\omega$ to turn $S_\sigma$ into
a $1$-form on $\T$ with values in $\Omega^1(M,W)$.

\begin{lemma}
The variation $V[d_{\nabla_\sigma}]$ of the covariant derivative $d_{\nabla_{\sigma}}$
on $\Omega(M,W)$ can be expressed as
$$ V[d_{\nabla_\sigma}] = \frac{i}{2h} \ad(i_V S_\sigma).$$
\end{lemma}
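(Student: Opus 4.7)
The plan is to realize the $\sigma$-dependence of the covariant derivative $d_{\nabla_\sigma}$ on $\Omega(M,W)$ as an inner derivation of the Moyal--Weyl product. The starting point is the local identity, valid in any Darboux chart,
\[
d_{\nabla_\sigma} \;=\; d \;+\; \frac{i}{h}\,\ad\bigl(\tilde{\Gamma}_\sigma\bigr),
\]
where $d$ denotes coordinate-wise exterior differentiation (treating the $y^i$ as parameters) and
\[
\tilde{\Gamma}_\sigma \;:=\; \tfrac{1}{2}\,\omega_{il}\,\Gamma^{l}_{jk}\,y^{j}\,y^{k}\, dx^{i},
\]
with $\Gamma^{l}_{jk}$ the Christoffel symbols of $\nabla_\sigma$ in the chosen chart. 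Because $\nabla_\sigma$ is symplectic and torsion-free, the tensor $\Gamma_{ijk}:=\omega_{il}\Gamma^{l}_{jk}$ is totally symmetric in $(i,j,k)$; this is exactly the property that identifies the affine tangent space $\mathcal{S}$ of symplectic connections with sections of $S^{3}(T^{*}M)$ via $\omega$. The identity above is verified by a direct computation, starting from the local action $\nabla_{\partial_{j}}y^{i} = -\Gamma^{i}_{jk}y^{k}$ on the coordinate functions, using the exact Moyal--Weyl commutator
\[
[y^{j}y^{k},a]_{MW} \;=\; ih\bigl(\pi^{jl}y^{k}+\pi^{kl}y^{j}\bigr)\,\partial_{y^{l}}a,
\]
and absorbing the factor of $2$ via the symmetry of $\Gamma_{ijk}$.

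Once the local formula is established, the variation along $V$ is immediate: the operator $d$ is $\sigma$-independent, so
\[
V[d_{\nabla_\sigma}] \;=\; \frac{i}{h}\,\ad\bigl(V[\tilde{\Gamma}_\sigma]\bigr).
\]
Although neither $d$ nor $\tilde{\Gamma}_\sigma$ is globally defined, the variation $V[\tilde{\Gamma}_\sigma] = \tfrac{1}{2}\,\omega_{il}\,V[\Gamma^{l}_{jk}]\,y^{j}y^{k}\,dx^{i}$ is a genuine tensor (differences of Christoffel symbols transform tensorially) and so defines a global element of $\Omega^{1}(M,W)$. By the definition of $S_\sigma$ as the $1$-form on $\T$ encoding the variation of $\nabla_\sigma$, together with the convention indicated immediately before the statement for turning $\mathcal{S}$-valued forms into elements of $\Omega^{1}(M,W)$ via $\omega$, one reads off $V[\tilde{\Gamma}_\sigma] = \tfrac{1}{2}\, i_{V}S_\sigma$. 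Substituting this into the displayed formula produces the factor $\tfrac{1}{2}$ and yields $V[d_{\nabla_\sigma}] = \frac{i}{2h}\ad(i_{V}S_\sigma)$.

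The main obstacle is the bookkeeping of conventions: sign choices for $\circ_{MW}$, for the duality between $\pi$ and $\omega$, and for the normalization used to turn a symmetric $3$-tensor into a quadratic polynomial in $y$, must all be fixed consistently, otherwise the factor $\tfrac{1}{2}$ (and possibly a sign) in the final formula can shift. Apart from this, the argument is a routine local computation inside Fedosov's framework and introduces no new structural ingredient beyond the identification of $d_{\nabla_\sigma}$ with an inner derivation of the Moyal--Weyl algebra.
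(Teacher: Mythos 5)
Your proposal is correct and rests on the same underlying computation as the paper: the Moyal--Weyl commutator with an $h$-independent, $y$-quadratic element is exactly first order in $h$ (the paper phrases this via the odd-power observation), so $\frac{i}{2h}\ad(i_VS_\sigma)$ reproduces the tensorial variation of the connection, the $\frac{1}{2}$ compensating the factor $2$ coming from the quadratic form. The only difference is packaging: you route through Fedosov's local formula $d_{\nabla_\sigma}=d+\frac{i}{h}\ad(\tilde{\Gamma}_\sigma)$ and differentiate in $\sigma$, whereas the paper computes $\ad(i_VS_\sigma)$ directly and observes that it undoes the contraction with $\omega$; your handling of the convention-dependent factor $\frac{1}{2}$ is consistent with the paper's normalization.
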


\begin{proof}
If we consider $a \in W$ which does not depend on $h$, only odd powers of $h$ will appear in the adjoint action
$\ad(a)$ under the Moyal-Weyl product. Hence 
we only need to compute $\ad(i_V S_\sigma)$ to first order, which is essentially
given by contracting $i_V S_\sigma$ by the Poisson bivector field dual to $\omega$,
just undoing the contraction by $\omega$ that occurred before.
\end{proof}

Thanks to the lemma, we arrive at the following formula for the commutator:
$$  i_V [\hat{D}_s,D_r] = \frac{i}{h}\ad\left(-D_r(i_Vs) + V[r] + \frac{1}{2}i_V S_\sigma\right).$$
Our strategy is now the same as in Fedosov's construction:
we have to choose $s$ such that the expression in the bracket
on the right hand side of the above equation lies in the center of $\Omega(M,W)$,
which is $\Omega(M)[[h]]$, i.e. we impose
\begin{equation}\label{eqn:Fedosov-type}
 -D_r(i_Vs) + V[r] + \frac{1}{2}i_V S_\sigma = i_V \beta
 \end{equation}
for $\beta$ a $1$-form on $\T$ with values in $\Omega^1(M)$.
One finds a necessary condition for Equation (\ref{eqn:Fedosov-type}) which reads
$$ V[\alpha_\sigma] = d_M i_V \beta,$$
and coincides with the condition we imposed on $\beta$ at the beginning.

We have the following analogon to Theorem \ref{thm:fedosov_char_class}:

\begin{prop}\label{prop: s}
There is a unique $s \in \Omega^1(\T,\Omega^1(M,W))$ such that
$$  -D_r(i_Vs) + V[r] + \frac{1}{2}i_V S_\sigma = i_V \beta$$
and $\delta^* (i_V s) = 0$ hold for all vector fields $V$ on $\T$.
\end{prop}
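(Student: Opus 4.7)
The plan is to follow Fedosov's iterative construction from the proof of Theorem \ref{thm:fedosov_char_class}, now carried out fibrewise over $\T$. Setting $u := i_V s$ and $\eta := V[r] + \tfrac{1}{2} i_V S_\sigma - i_V \beta$, the equation to be solved reads
$$ D_r u = \eta, \qquad \delta^* u = 0. $$
I would decompose $D_r = -\delta + D'$ with $D' := d_{\nabla_\sigma} + \tfrac{i}{h}\ad(r)$; since $r$ starts in positive total degree in Fedosov's construction, $D'$ strictly raises the total degree, while $-\delta$ lowers it by one (decreasing the $y$-degree by one and raising the form-degree by one). The equation rewrites as $\delta u = D' u - \eta$, which can be solved recursively in the total-degree filtration using the Koszul homotopy identity relating $\delta$ and $\delta^*$, with the gauge condition $\delta^* u = 0$ pinning down the unique lift at each order.

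The crucial verification is that the iteration never stalls, i.e.\ that $\eta$ is $D_r$-closed. I would derive $D_r \eta = 0$ by differentiating the Fedosov equation
$$ \omega + \delta r - R_\sigma - d_{\nabla_\sigma} r - \tfrac{i}{h}\, r \circ_{MW} r = \alpha_\sigma $$
along $V$, which yields
$$ D_r V[r] = -V[\alpha_\sigma] - V[R_\sigma] - V[d_{\nabla_\sigma}](r). $$
Substituting this into $D_r \eta = D_r V[r] + \tfrac{1}{2} D_r(i_V S_\sigma) - D_r(i_V \beta)$ and using three ingredients---(i) the preceding lemma $V[d_{\nabla_\sigma}] = \tfrac{i}{2h}\ad(i_V S_\sigma)$, together with the symmetry $\ad(a)(b) = \ad(b)(a)$ for two odd form-degree elements (since the graded commutator reduces to an anti-commutator); (ii) the identity $V[R_\sigma] = \tfrac{1}{2} d_{\nabla_\sigma}(i_V S_\sigma)$, obtained by differentiating $d_{\nabla_\sigma}^2 = \tfrac{i}{h}\ad(R_\sigma)$ and noting the difference lies in the centre; and (iii) the vanishing $\delta(i_V S_\sigma) = 0$, which follows from $S_\sigma$ corresponding to totally symmetric $3$-tensors---all interior terms cancel, leaving only the combination of $V[\alpha_\sigma]$ and $d_M i_V \beta$. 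The trivialization hypothesis $d_M i_V \beta = V[\alpha_\sigma]$ imposed on $\beta$ then yields $D_r \eta = 0$.

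With $D_r \eta = 0$ in hand, existence and uniqueness of $u$ follow by the standard Fedosov iteration: at each order one sets $u_{k+1} = \delta^*(D' u_k - \eta_k)$, and the closedness of $\eta$ combined with the degree count ensures that this right-hand side is $\delta$-exact at every stage, while the gauge condition $\delta^* u = 0$ selects the unique representative. Smoothness of $s$ in $\sigma \in \T$ is automatic, since each step of the recursion is an algebraic or differential operation on smooth families. The main technical obstacle is the algebraic bookkeeping required to verify $D_r \eta = 0$ from the variation of the Fedosov equation; once this cancellation is established, the remainder of the proof is a direct translation of Fedosov's original argument to the family setting.
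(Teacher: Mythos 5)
Your proposal is correct in outline and follows the same overall reduction as the paper: rewrite the equation as $D_r(i_Vs)=V[r]+\tfrac12 i_VS_\sigma-i_V\beta$, show the right-hand side is $D_r$-closed, and then get existence and uniqueness from the Fedosov-type recursion. Where you genuinely diverge is in the closedness verification. The paper argues structurally: by the graded Jacobi identity, $0=[D_r,[d_\T,D_r]]=\pm\tfrac{i}{h}\ad\bigl(D_r(V[r]+\tfrac12 i_VS_\sigma-i_V\beta)\bigr)$, so $D_r$ of the right-hand side is central, and one only has to compute its scalar component, which is $V[\alpha_\sigma]-d_Mi_V\beta=0$ by the hypothesis on $\beta$. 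You instead expand everything by differentiating the Fedosov equation along $V$, which forces you to supply two auxiliary identities the paper never needs: the variation formula relating $V[R_\sigma]$ to $d_{\nabla_\sigma}(i_VS_\sigma)$ and the vanishing $\delta(i_VS_\sigma)=0$ (the latter is indeed correct, by total symmetry of $S_\sigma$). This route is legitimate and has the merit of making the variational identities explicit, but it is much more exposed to sign conventions: your $d_{\nabla_\sigma}^2=\tfrac{i}{h}\ad(R_\sigma)$ contradicts the sign stated in Subsection \ref{subsection: Fedosov} (though it is the one consistent with the displayed formula for $D_r^2$), and in a fully consistent bookkeeping the surviving scalar term must come out as a multiple of $V[\alpha_\sigma]-d_Mi_V\beta$, not of $V[\alpha_\sigma]+d_Mi_V\beta$; the cancellation is exactly borderline, so you should fix one coherent set of conventions (sign of $R_\sigma$, the $\omega$-contraction turning $S_\sigma$ into a $W$-valued form, and the sign with which $\beta$ enters) and recheck this step, whereas the Jacobi-identity argument gets centrality for free and never touches these terms. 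Two smaller points you share with the paper but should state more carefully: $i_Vs$ lies in $\Omega^0(M,W)$ (the $\Omega^1(M,W)$ in the statement is a typo, compare Proposition \ref{prop: s2}), so $\delta^*(i_Vs)=0$ is vacuous as literally written and the actual normalization is the one built into the $\delta^{-1}$-recursion, solutions differing a priori by $D_r$-flat sections $\tau(g)$; and existence from closedness uses the acyclicity of $D_r$ in positive form-degree, which is the standard Fedosov argument you correctly invoke.
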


\begin{proof}
We rewrite the equation as
$$ D_r(i_Vs) = V[r] + \frac{1}{2}i_V S_\sigma - i_V \beta.$$
By general cohomological considerations,
it suffices to now prove that
the right hand side of the above equation is closed
 with respect to $D_r$.
 
 By $0=[D_r,[d_\T, D_r]] = \pm \frac{i}{h} \ad( D_r(V[r] + \frac{1}{2}i_V S_\sigma - i_V \beta)),$
 we know that $D_r(V[r] + \frac{1}{2}i_V S_\sigma - i_V \beta)$ is central.
 If we compute the component of this element in the center,
 we obtain $V[\alpha_\sigma] - d_M i_V \beta$, which vanishes by assumption.
 
 We now know that there is an appropriate solution $i_Vs$.
 The condition $\delta^*(i_V s)=0$ singles out a unique one. 
\end{proof}

\begin{lemma}\label{lemma: variation of tau}
Let $V$ be a vector field on $\T$.
The variation of $\tau_r(f)$ in the direction of $V$ is given by
$$ V[\tau_r(f)] = \frac{i}{h}(\tau \circ p - \mathrm{id})\left([i_Vs,\tau_r(f)] \right).$$
\end{lemma}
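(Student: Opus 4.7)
The plan is to exploit the graded commutation relation $\hat{D}_s D_r + D_r\hat{D}_s = 0$ from Proposition \ref{prop: s}. Since $\tau_r(f)$ is characterized by $D_r\tau_r(f)=0$, this anti-commutation immediately forces $D_r\bigl(\hat{D}_s\tau_r(f)\bigr)=0$. Contracting with a vector field $V$ on $\T$---which commutes with $D_r$, since $D_r$ acts only on the $M$-factor of $\Omega^\bullet(\T,\Omega^\bullet(M,W))$---I obtain
\[
V[\tau_r(f)] + \tfrac{i}{h}[i_Vs,\tau_r(f)] \;=\; i_V\hat{D}_s\tau_r(f),
\]
and this expression lies in $\ker\bigl(D_r|_{\Omega^0(M,W)}\bigr)$.

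Next I would invoke the central structural fact underpinning Fedosov's construction: the projection $p\colon \Omega^0(M,W)\to C_h^\infty(M)$ restricts to an isomorphism on $\ker(D_r|_{\Omega^0(M,W)})$ with inverse $\tau_r$. Hence any $D_r$-closed element $\phi\in \Omega^0(M,W)$ satisfies $\phi = \tau_r(p(\phi))$. Applying this to $\phi := V[\tau_r(f)] + \tfrac{i}{h}[i_Vs,\tau_r(f)]$, using $p\circ \tau_r = \mathrm{id}$, and observing that the lemma treats $f$ as $\T$-independent so that $p(V[\tau_r(f)]) = V[f] = 0$, one gets
\[
V[\tau_r(f)] + \tfrac{i}{h}[i_Vs,\tau_r(f)] \;=\; \tau_r\circ p\bigl( \tfrac{i}{h}[i_Vs,\tau_r(f)] \bigr),
\]
which rearranges immediately to the asserted identity.

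The main obstacle is the careful bookkeeping of bidegrees on $\Omega^\bullet(\T,\Omega^\bullet(M,W))$: both $\hat{D}_s$ and $D_r$ are odd, so vanishing of their graded commutator is true anti-commutation on mixed forms, and I would need to verify that this is really what is delivered by Proposition \ref{prop: s}, where the defining equation for $s$ only guarantees the bracket vanishes modulo central elements (the scalar correction $i_V\beta$ drops out of $\ad$ but appears in the commutator computation, and one has to check that its contribution vanishes when paired against $\tau_r(f)$). A secondary subtlety worth recording in a remark is that if $f$ is allowed to vary with $\sigma$ the formula acquires an extra summand $\tau_r(V[f])$, which is why the statement is formulated for $\T$-independent $f$.
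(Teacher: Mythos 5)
Your proposal is correct and is essentially the paper's own argument: both exploit that $[\hat{D}_s,D_r]=0$ (the central term $i_V\beta$ indeed drops out since $\ad$ of a scalar form vanishes) to conclude $\hat{D}_s\tau_r(f)$ is $D_r$-closed, then use that $D_r$-closed elements of $\Omega^0(M,W)$ are recovered from their $p$-projection via $\tau_r$, together with $p(V[\tau_r(f)])=V[f]=0$ for fixed $f$, and rearrange.
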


\begin{proof}
Let us define $\mu(f)$ to be $\hat{D}_s\tau_r(f)$.
Since $D_r$ and $\hat{D}_s$ commute, $\mu(f)$ is closed with respect to $D_r$.
Moreover, the image under $p$ computes to
$$ p(\mu(f)) = p( (d_\T + \frac{i}{h}\ad(s))\tau_r(f)) = \frac{i}{h}p([s,\tau_r(f)]).$$
Since $D_r$-closed elements are determined by their image under $p$, we obtain
$$ \mu(f) = \frac{i}{h}(\tau \circ p) ([s,\tau_r(f)]).$$
Inserting $\mu(f) = (d_\T + \frac{i}{h}\ad(s))\tau_r(f)$ into the equality
yields
\[
 d_\T \tau_r(f) = \frac{i}{h}(\tau\circ p -\mathrm{id})([s,\tau_r(f)]). \qedhere 
\]
\end{proof}

\begin{prop}\label{prop: s2}
Let $s \in \Omega^1(\T,\Omega^0(M,W))$ be as in Proposition \ref{prop: s}.

Then
$$ A(V) := \frac{i}{h} p\left([i_Vs,\tau_{r}(\cdot)] \right)$$
defines a $1$-form on $\T$ with values in $\D^1_h(M)$
that satisfies the following:
\begin{enumerate}
\item If $X_f$ denotes the Hamiltonian vector field of $f$ and $\beta_1$ is the component of $\beta$ of order $h$, we have 
$$A(V)(f) = -h (i_V i_{X_f}\beta_1) \quad \pmod{h^2}.$$
\item The identity
 $$d_H i_V A(V) = V[\star_{\nabla_\sigma,\alpha_\sigma}]$$
 holds.
 \end{enumerate}
\end{prop}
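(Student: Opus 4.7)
The plan is to separate the two claims. Claim (2) follows by a purely formal manipulation from Lemma \ref{lemma: variation of tau} combined with the Leibniz rule for the Moyal--Weyl commutator, while claim (1) requires a careful low-order analysis of $i_V s$ via the defining equation of Proposition \ref{prop: s}.

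For (2), starting from $f\star_\sigma g = p(\tau_r f \circ_{MW}\tau_r g)$ and using that $\circ_{MW}$ is independent of $\sigma$, differentiating along $V$ gives
\[
V[\star_\sigma](f,g) = p\bigl(V[\tau_r f]\circ_{MW}\tau_r g + \tau_r f\circ_{MW}V[\tau_r g]\bigr).
\]
Inserting Lemma \ref{lemma: variation of tau}, each $V[\tau_r h]$ splits into a $\tau\circ p$-part and an identity part. The $\tau\circ p$-parts combine into $A(V)(f)\star_\sigma g + f\star_\sigma A(V)(g)$ using $\tau_r h\circ_{MW}\tau_r k = \tau_r(h\star_\sigma k)$, while the identity parts assemble via the Leibniz rule
\[
[i_V s,\tau_r f\circ_{MW}\tau_r g] = [i_V s,\tau_r f]\circ_{MW}\tau_r g + \tau_r f\circ_{MW}[i_V s,\tau_r g]
\]
together with $\tau_r f\circ_{MW}\tau_r g = \tau_r(f\star_\sigma g)$ into $-A(V)(f\star_\sigma g)$. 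Rearranging yields exactly $(d_H A(V))(f,g) = V[\star_\sigma](f,g)$.

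For (1), expand the Moyal--Weyl commutator as
\[
[a,b]_{MW} = ih\,\pi^{\alpha\beta}\partial_{y^\alpha}a\,\partial_{y^\beta}b + O(h^3),
\]
and use $\tau_r(f) = f + \partial_i f\cdot y^i + (\text{higher in $y,h$})$. Applying $p$ one finds
\[
A(V)(f) = -\pi^{\alpha\beta}\,(i_V s)_{1,\alpha}\,\partial_\beta f + O(h^2),
\]
where $(i_V s)_{1,\alpha}$ denotes the $y^\alpha$-coefficient of the $y$-linear part of $i_V s$. To identify this coefficient modulo $h^2$, apply the extended projection $p\colon\Omega^\bullet(M,W)\to\Omega^\bullet(M)[[h]]$ to the defining equation $D_r(i_V s)= V[r] + \tfrac{1}{2}i_V S_\sigma - i_V\beta$. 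The condition $\delta^* r = 0$ forces $r$ to have no $y^0$ or $y^1$ components, so $p(V[r])=0$ and $p\bigl(\tfrac{i}{h}\ad(r)(i_V s)\bigr)$ vanishes at orders $h^0$ and $h^1$; also $p(i_V S_\sigma)=0$ since $i_V S_\sigma$ is quadratic in $y$. With the normalisation $p(i_V s)=0$, the projected equation reduces to
\[
(i_V s)_{1,\alpha}\,dx^\alpha \equiv i_V\beta \pmod{h^2}.
\]
Reading off $h^0$ gives $(i_V s)^{(0)}_{1,\alpha}=0$, confirming $A(V)\equiv 0\pmod h$; reading off $h^1$ gives $(i_V s)^{(1)}_{1,\alpha} = (i_V\beta_1)_\alpha$. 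Substituting back,
\[
A(V)(f) \equiv -h\,\pi^{\alpha\beta}(i_V\beta_1)_\alpha\,\partial_\beta f = -h\,(i_V\beta_1)(X_f) = -h\,i_V i_{X_f}\beta_1 \pmod{h^2}.
\]

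The main obstacle is the verification in (1) that $\tfrac{i}{h}\ad(r)(i_V s)$ contributes nothing to the $y$-constant, form-degree one part of the projected equation at orders $h^0$ and $h^1$. This relies on the vanishing of the low-$y$-degree components of $r$ coming from $\delta^* r = 0$ in Fedosov's recursion, combined with careful bookkeeping of the $h$-filtrations of both $r$ and $i_V s$. The argument for (2), though more mechanical, similarly requires precise tracking of the interaction between $\tau_r$, $\circ_{MW}$ and $p$.
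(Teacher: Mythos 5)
Your treatment of (2) is the paper's own argument: differentiate $f\star_\sigma g = p(\tau_r(f)\circ_{MW}\tau_r(g))$, insert Lemma \ref{lemma: variation of tau}, and recombine using $\tau_r(f)\circ_{MW}\tau_r(g)=\tau_r(f\star_\sigma g)$ and the Leibniz rule for $\ad(i_Vs)$; nothing to add there. For (1) your route --- apply the fibrewise-constant projection $p$ to the defining equation of Proposition \ref{prop: s} and read off the $y$-linear coefficient of $i_Vs$ at orders $h^0$ and $h^1$ --- is a legitimate variant of the paper's argument, which instead solves the lowest-total-degree piece of that equation directly and identifies the relevant component of $i_Vs$ as $a=h\,\delta^*i_V\beta_1$; both routes end at the same formula. (You omit the preliminary claim that $A(V)$ lands in $\D^1_h(M)$, which the paper gets from the jet-dependence of $\tau_r(f)$ and the fibrewise nature of $\circ_{MW}$ --- minor.)

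There is, however, one incorrect supporting claim in your step for (1): $\delta^*r=0$ does \emph{not} force $r$ to have no $y$-linear component. In Fedosov's recursion for Weyl curvature $\alpha=\omega+h\alpha_1+\cdots$, the element $r$ contains a term of the type $h\,\delta^*\alpha_1$, which is linear in $y$ (it enters at order $h$). What actually saves the computation is the total-degree filtration: $r$ has total degree $\geq 3$, and since the right-hand side $V[r]+\tfrac12 i_VS_\sigma-i_V\beta$ has total degree $\geq 2$, the solution $i_Vs$ has total degree $\geq 3$; hence every $y$-degree $\leq 1$ component of $r$ and every $y$-degree $1$ component of $i_Vs$ carries at least one power of $h$, so $p(V[r])$ and the $y$-constant part of $\tfrac{i}{h}\ad(r)(i_Vs)$ only contribute at order $h^2$. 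With that correction (which is the ``bookkeeping of the $h$-filtrations'' you gesture at, but with the right mechanism) your projected equation gives $(i_Vs)_{1,\alpha}dx^\alpha = h\,i_V\beta_1 + O(h^2)$, i.e. the paper's $a=h\,\delta^*i_V\beta_1$, and the claimed formula for $A(V)$ modulo $h^2$ follows. Finally, your normalisation $p(i_Vs)=0$ is not literally the condition $\delta^*(i_Vs)=0$ of Proposition \ref{prop: s}, but this is harmless: the central part of $i_Vs$ acts trivially through $\ad$ and so does not affect $A(V)$.
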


\begin{proof}

That $A(V)$ is a differential operator follows
from the fact that the value of  $\tau_r(f)$ at  $x$ depends only on the jet of $f$ at $x$
and that the Moyal-Weyl product only acts fibrewise.

To verify the claim about the lowest order terms of $A(V)$, we have to consider the lowest orders of $s$ and $\tau_r(f)$ with respect to the total degree, which is given by the polynomial degree
in the Weyl-algebra plus twice the power in $h$.
The expansion up to order $1$ of $\tau_r(f)$ is $f + df$,
where $df$ is seen as a function on $\T$ with values in $W$.
Since $f \in C^\infty(M)$ lies in the center of the Weyl-algebra,
only the term $df$ is relevant for our considerations.

Concerning $s$, we notice that we only need to consider its component in
$T^*M \subset W$ because we are only interested in $ p([s,df]),$
and all other components lead to terms that project to zero under $p$.
Inspecting this component in lowest order, we find an element $a$
which is uniquely determined by
$$ -\delta(i_Va) = -hi_V\beta_1, \quad \textrm{and } \delta^*a=0,$$
where $\beta_1$ is the term of order $h$ in $\beta$.
The solution to this equation is given by
$$ a = h \delta^* i_V\beta_1.$$
In total, we obtain that the lowest order term of $A(V)$
is given by
$$ \frac{i}{h} \ad(a)(df) = i [\delta^*i_V \beta_1,df] = -h (i_V i_{X_f}\beta_1),$$
where $X_f$ denotes the Hamiltonian vector field of $f$.

It remains to verify that the image of $A(V)$ under the Hochschild differential
$d_H$ is $V[\star_\sigma]$.
In fact we compute:
\[
V[f\star_{\nabla_\sigma,\alpha_\sigma} g] = p(V[\tau_r(f)] \circ_{MW} \tau_r(g))
+ p(\tau_r(f) \circ_{MW} V[\tau_r(g)]).
\]  
We now use the expression for the variation of $\tau_r(f)$ and $\tau_r(g)$
we obtained in Lemma \ref{lemma: variation of tau} and arrive at
$$
V[f\star_{\nabla_\sigma,\alpha_\sigma} g] =  A(V)(f) \star_{\nabla_\sigma,\alpha_\sigma} g - f \star_{\nabla_\sigma,\alpha_\sigma} A(V)(g) - A(V)(f\star_{\nabla_\sigma,\alpha_\sigma} g),
$$
which is exactly $V[\star_{\nabla_\sigma,\alpha_\sigma}] = d_H A(V)$.
\end{proof} 

Combining Proposition \ref{prop: s} and \ref{prop: s2}, we obtain an assignment
 $$C: \mathcal{C} \to \mathcal{P}$$ and thereby complete the proof of Theorem \ref{thm:trivializations->trivializations}.

The following result expresses the curvature of the formal connection
$D_V = V + A(V)$ in terms of the element $s$:

\begin{prop}
Let $A(V):= \frac{i}{h}p([i_Vs,\tau_r(\cdot)])$ be the connection
$1$-form associated to the element $s$ from
Proposition \ref{prop: s}.
The curvature of the formal connection $D_V = V + A(V)$
equals the $2$-form $\Omega_s$ on $\T$ with values in formal differential operators
on $M$ given by
$$ \Omega_s(f) := \frac{i}{h}p([d_\T s + \frac{i}{h} s\circ_{MW} s, \tau_r(f)]).$$
\end{prop}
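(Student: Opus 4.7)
The plan is to lift $D$ to a connection $\hat{D}_s := d_{\T} + \tfrac{i}{h}\ad(s)$ acting on the (trivial) bundle over $\T$ with fibre $\Omega^0(M,W)$, to show that $\tau_r$ intertwines $D$ and $\hat{D}_s$, and then to transport the curvature computation from $\hat{D}_s$ (which is straightforward, as $\hat{D}_s$ is of the form $d+\text{one-form with values in an associative algebra}$) back to $D$ using $p \circ \tau_r = \mathrm{id}$.

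First I would establish the intertwining relation
\[
\hat{D}_V \tau_r(f) := V[\tau_r(f)] + \tfrac{i}{h}\,[i_Vs,\tau_r(f)] = \tau_r(D_V f)
\]
for every section $f$ of $C_h$ and every vector field $V$ on $\T$. This is essentially a rearrangement of Lemma \ref{lemma: variation of tau}: for an $f$ independent of $\sigma$ that lemma yields $V[\tau_r(f)] = \tfrac{i}{h}(\tau\circ p-\mathrm{id})([i_Vs,\tau_r(f)])$, so using the definition of $A(V)$ from Proposition \ref{prop: s2} and the fact that the $D_r$-horizontal extension $\tau$ of $p([i_Vs,\tau_r(f)])$ is precisely $\tau_r(A(V)(f))$, one obtains $V[\tau_r(f)] + \tfrac{i}{h}[i_Vs,\tau_r(f)] = \tau_r(A(V)(f))$. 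For sections depending on $\sigma$, Leibniz supplies the extra term $\tau_r(V[f])$, combining to $\tau_r(D_V f)$.

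Next I would compute the curvature of $\hat{D}_s$ directly. Since $\hat{D}_s = d_\T + \alpha$ with $\alpha(V) = \tfrac{i}{h}\ad(i_Vs)$ and $\ad$ is a Lie-algebra homomorphism, the standard formula gives, on commuting vector fields,
\[
F_{\hat{D}_s}(V,W) = d_\T\alpha(V,W) + [\alpha(V),\alpha(W)] = \tfrac{i}{h}\ad\!\left((d_\T s)(V,W) + \tfrac{i}{h}\,[i_Vs,i_Ws]\right),
\]
which is exactly $\tfrac{i}{h}\ad(i_Vi_W(d_\T s + \tfrac{i}{h}s\circ_{MW}s))$, where the wedge $s\circ_{MW} s$ is defined using the Moyal-Weyl product in the coefficient. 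Iterating the intertwining from Step~1 gives $\tau_r(F_D(V,W)f) = F_{\hat{D}_s}(V,W)\tau_r(f)$; applying $p$ and invoking $p\circ\tau_r=\mathrm{id}$ yields
\[
F_D(V,W)f = \tfrac{i}{h}\,p\!\left([i_Vi_W(d_\T s + \tfrac{i}{h}s\circ_{MW}s),\tau_r(f)]\right) = (i_Vi_W\,\Omega_s)(f),
\]
which is the asserted formula.

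The main obstacle is Step~1: one has to recognise the formula of Lemma \ref{lemma: variation of tau} as precisely the Leibniz deficit between $V\circ \tau_r$ and $\tau_r\circ D_V$, and to use that $\tau\circ p$ restricts to the identity on the subspace of $D_r$-horizontal sections of $W$. Once the intertwining is in place, Step~2 is a purely formal computation for a connection with values in an associative algebra (via $\ad$), and Step~3 is simply pulling the result through $\tau_r$ and $p$, so no further difficulty arises.
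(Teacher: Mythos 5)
Your argument is correct, and it reaches the stated formula by a more structural route than the paper. The paper's proof expands the curvature of $D_V=V+A(V)$ directly as $d_\T A + A\wedge A$: it differentiates $A(V)(f)=\tfrac{i}{h}p([i_Vs,\tau_r(f)])$ using Lemma \ref{lemma: variation of tau}, observes that the term involving $\tau\circ p$ cancels exactly against $A\wedge A$, and then converts the remaining double commutator via $[s,[s,X]]=\tfrac12[[s,s],X]=[s\circ_{MW}s,X]$. You instead repackage the same lemma as the intertwining relation $\hat{D}_V\tau_r(f)=\tau_r(D_Vf)$ for the Weyl-bundle connection $\hat{D}_s=d_\T+\tfrac{i}{h}\ad(s)$ (already introduced in the paper when constructing $s$), compute the curvature of $\hat{D}_s$ purely formally using that $\ad$ is a Lie algebra homomorphism, and push the result down with $p\circ\tau_r=\mathrm{id}$; the cancellation that the paper performs by hand is then automatic, being hidden in the intertwining statement. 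What your version buys is conceptual clarity -- it exhibits $D$ as the image of $\hat{D}_s$ under the quantization map $\tau_r$, so the curvature formula is forced -- at the cost of having to verify the intertwining also for $\sigma$-dependent sections (which you correctly handle by Leibniz) in order to iterate it; the paper's version is a shorter self-contained computation that never needs this. The only point to watch is the usual sign/ordering convention in identifying $[i_Vs,i_Ws]$ with the contraction of $s\circ_{MW}s$, but this is at the same level of implicitness as in the paper and is not a gap.
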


\begin{proof}
We first compute $(d_\T A)(f) = d_\T\frac{i}{h}p([s,\tau_r(f)])$.
By Lemma \ref{lemma: variation of tau}, we obtain
$$ \frac{i}{h}p([d_\T s,\tau_r(f)] - [s,\frac{i}{h}(\tau\circ p - \mathrm{id})[s,\tau_r(f)]]).$$
On the other hand, applying $A \wedge A$ to $f$ yields
$$ \left(\frac{i}{h}\right)^2p([s,\tau_r p([s,\tau_r(f)])]),$$
which cancels with one of the terms from $(d_\T A)(f)$.
The remaining terms are
$$ \frac{i}{h}p([d_\T s,\tau_r(f)] + \frac{i}{h}[s,[s,\tau_r(f)]]).$$
In order to arrive at the claimed expression, we apply the identity $[s,[s,X]] = \frac{1}{2}[[s,s],X]= [s\circ_{MW}s,X]$.
\end{proof}

\addcontentsline{toc}{section}{Bibliography}
\bibliographystyle{amsalpha}
\bibliography{biblio}

\end{document}